\newtheorem*{claim}{\hspace{2em} Claim}
\newtheorem{Theorem}{Theorem}[section]
\newtheorem{Definition}[Theorem]{ Definition}
\newtheorem{Lemma}[Theorem]{ Lemma}
\newtheorem{Proposition}[Theorem]{ Proposition}
\newtheorem{theoremalph}{Theorem}
\newtheorem*{Remark}{Remark}
\newcommand{\subsectionruninhead}{\@startsection{subsection}{2}{0mm}{-\baselineskip}{-0mm}{\bf\large}}
\newcommand{\subsubsectionruninhead}{\@startsection{subsubsection}{3}{0mm}{-\baselineskip}{-0mm}{\bf\normalsize}}
\newsavebox{\@brx}
\newcommand{\llangle}[1][]{\savebox{\@brx}{\(\m@th{#1\langle}\)}
	\mathopen{\copy\@brx\kern-0.5\wd\@brx\usebox{\@brx}}}
\newcommand{\rrangle}[1][]{\savebox{\@brx}{\(\m@th{#1\rangle}\)}
	\mathclose{\copy\@brx\kern-0.5\wd\@brx\usebox{\@brx}}}
\begin{document}
   \newpage
   \title{The Lyapunov Exponents of Hyperbolic Measures for $C^1$ Vector Fields with Dominated Splitting}
   \author{ Wanlou Wu \footnote{Wanlou Wu was supported by NSFC 12001245, 12271260, 12471185.}}
   \date{}
   \maketitle

\begin{abstract}
   In this paper, we prove that for every $C^1$ vector field preserving an ergodic hyperbolic invariant measure which is not supported on singularities, if the Oseledec splitting of the ergodic hyperbolic invariant measure is a dominated splitting, then the ergodic hyperbolic invariant measure can be approximated by periodic measures, and the Lyapunov exponents of the ergodic hyperbolic invariant measure can also be approximated by the Lyapunov exponents of those periodic measures. 
\end{abstract}

\section{Introduction}
   Let $M^d$ be a compact $d$-dimensional $C^\infty$ Riemannian manifold without boundary. We will simply denote it by $M$ when no confusion arises. Denote by $\mathfrak{X}^r(M)(r\geq 1)$ the space of all $C^r$ vector fields on $M$. Let $\varphi^X_t=\{\varphi_t\}_{t\in\mathbb{R}}$ be the $C^1$ flow generated by a vector field $X\in\mathfrak{X}^1(M)$, for simplicity, we write it as $\varphi_t$. The derivative of the flow with respect to the space variable is called the \emph{\bf tangent flow} and is denoted by $\Phi_t={\rm d}\varphi_t$. Fix a smooth Riemannian metric on $M$. This induces a scalar product on each tangent space $T_xM$ that varies differentially with $x$. The limit $$\chi(x,v)=\lim_{t\rightarrow\pm\infty}\dfrac{1}{ t}\log\lVert\Phi_t(v)\rVert,~v\in T_xM,~v\neq0,~x\in M,$$ is called a \emph{\bf Lyapunov exponent} for a tangent vector $v\in T_xM$. The Lyapunov exponents for a differential equation are natural generalizations of the eigenvalues of the matrix in the linear part of the equation. The Lyapunov exponents describe asymptotic evolution of a tangent map: positive (resp. negative) exponents correspond to exponential growth (resp. decay) of the norm, while vanishing exponents indicate the absence of exponential behavior. 
    
   By the Oseledec Theorem \cite{OV}, the limit $\chi(x,v)$ exists for all nonzero vectors $v$ based on almost all state points $x\in M$ with respect to each given invariant measure, and it is independent of the points if the measure is ergodic. Precisely, let $\mu$ be an invariant measure of flow $\varphi_t$. By the Oseledec Theorem \cite{OV}, for $\mu$-almost every $x\in M$, there exist a positive integer $k(x)$, real numbers $\chi_1(x)<\chi_2(x)<\cdots<\chi_{k(x)}(x)$ and a measurable $\Phi_t$-invariant splitting $$T_xM=E_1(x)\oplus E_2(x)\oplus\cdots\oplus E_k(x)$$ such that $$\lim_{t\to\pm\infty}\dfrac{1}{ t}\log\lVert\Phi_t(v)\rVert=\chi_i(x),~\forall~v\in E_i(x),~v\neq 0,~i=1,2,\cdots,k(x).$$ The numbers $\chi_1(x),\cdots,\chi_{k(x)}(x)$ are called \emph{\bf Lyapunov exponents} at point $x$ of $\Phi_t$ with respect to $\mu$. Denote by $$d_i=\text{dim}(E_i(x)),\quad i=1,2,\dots,k(x)$$ the multiplicities of those Lyapunov exponents, and the vector formed by these numbers (counted with multiplicity, endowed with the increasing order) is called \emph{\bf Lyapunov vector} at point $x$ of $\Phi_t$ with respect to $\mu$. Disregarding the splitting, we list the Lyapunov exponents of $\mu$ (with multiplicity) as $\lambda_1(x)\leq\lambda_2(x)\leq\cdots\leq\lambda_d(x)$. That is, $$\lambda_j(x)=\chi_i(x),\quad\text{ for every } d_1+d_2+\cdots+d_{i-1}<j\leq d_1+d_2+\cdots+d_i.$$ If $\mu$ is ergodic, then these numbers $k(x),\chi_1(x),\cdots,\chi_{k(x)}(x)$ are constants independent of $x$. The Lyapunov exponent is an active topic in the theory of nonuniformly hyperbolic systems, known as Pesin theory, which recovers hyperbolic behavior for the points whose Lyapunov exponents are all nonzero. For such points, there exist well-defined unstable and stable invariant manifolds. By the Poincar\'{e} Recurrence Theorem, for $\mu$-almost every $x\in M$, $$\lim_{t\to\pm\infty}\dfrac{1}{t}\log\parallel\Phi_t|_{\langle X(x)\rangle}\parallel=0,$$ where $\langle X(x)\rangle$ denotes the $1$-dimensional subspace of $T_xM$ spanned by the flow direction $X(x)$. Consequently, an ergodic invariant measure is called \emph{\bf hyperbolic} if all its Lyapunov exponents are nonzero except the one corresponding to the flow direction.    

   For diffeomorphisms, Anosov \cite{A67} proved that a non-wandering orbit segment has a periodic orbit nearby for uniformly hyperbolic diffeomorphisms. For Axiom A diffeomorphisms, Sigmund \cite{Sig70} proved that periodic measures are dense in the set of all invariant measures. Katok \cite{Ka} showed that hyperbolic periodic points are dense in the closure of the basin of a given hyperbolic measure for $C^{1+\alpha}(\alpha>0)$ diffeomorphisms. Later, Wang and Sun \cite{WW10} enhanced Katok's work by showing that the Lyapunov exponents of an ergodic hyperbolic invariant measure for $C^{1+\alpha}(\alpha>0)$ diffeomorphisms can be approximated by those of the atomic measures supported on hyperbolic periodic orbits (see also \cite{ZS10} for $C^1$ diffeomorphisms when the Oseledec splitting is dominated). Further approximation results can be found in \cite{LLS09,LLS14}. Since a vector field generates a continuous-time dynamical system and a diffeomorphism can be viewed as the time-$1$ map of a suitable vector field, the dynamic behavior of vector fields and diffeomorphisms are similar in most cases. By considering the sectional Poincar\'{e} maps of the vector field, sometimes one can get some (not all) similar properties between $d$-dimensional vector fields and $(d-1)$-dimensional diffeomorphisms. It is natural to ask the following questions:    
\begin{enumerate}[]
   \item \textcolor{red}{$Q_1$}: Are hyperbolic periodic points dense in the closure of the basin of a given hyperbolic measure for smooth vector fields?
   
   \item \textcolor{red}{$Q_2$}: Can the Lyapunov exponents of an ergodic hyperbolic invariant measure for smooth vector fields be approximated by those of the atomic measures supported on hyperbolic periodic orbits?   
\end{enumerate}        
   
   For the question \textcolor{red}{$Q_1$}, Ma \cite{M22} stated that hyperbolic periodic points are dense in the closure of the basin of a given hyperbolic measure for smooth semiflows on separable Banach spaces. Recently, Li, Liang and Liu \cite{LLL24} proved that for $C^{1+\alpha}(\alpha>0)$ nonuniform hyperbolic vector fields, every ergodic hyperbolic invariant measure which is not supported on singularities can be approximated by periodic measures. Lu and Wu \cite{LW2025} gave positive answers to both questions for $C^1$ star vector fields on three-dimensional manifolds. Compared with diffeomorphisms, vector fields present additional challenges due to the presence of singularities. Flows with singularities exhibit rich and complicated dynamics, a famous example being the \emph{Lorenz attractor} \cite{Lor63,Guc76}. At singularities, one cannot define the \emph{linear Poincar\'{e} flow} (see Definition \ref{Def:linearPoincare}). Hence we lose some compact properties. This prevents one from directly applying certain techniques developed for diffeomorphisms, such as Crovisier's central model \cite{CS10} and the distortion arguments of Pujals-Sambarino \cite{PS00}, to singular vector fields. Although the sectional Poincar\'{e} maps of the vector field can get similar properties as diffeomorphisms with lower one dimensional, some vector fields (the famous Lorenz attractor \cite{Lor63,Guc76}) displays different dynamics. In the spirit of the Lorenz attractor, geometric Lorenz attractors \cite{ABS77,Guc76,GW79} were constructed in a theoretical way. Loosely speaking, a geometric Lorenz attractor is a robust attractor and it contains a hyperbolic singularity which is accumulated by hyperbolic periodic orbits in a robust way. The return map of a geometric Lorenz attractor is discontinuous. This creates additional obstacles when one wants to generalize Ma\~{n}\'{e} \cite{MR} classical argument and Katok's \cite{Ka} classical result. In this paper, we establish a relation between the Lyapunov exponents of an ergodic hyperbolic invariant measure for $C^1$ vector fields on a $d$-dimensional ($d\geq 3$) manifold and those of hyperbolic periodic measures. Our main results are stated in Theorems \ref{ThmA} and \ref{ThmB}.     	
\begin{theoremalph}\label{ThmA}
   Let $\varphi_t$ be the $C^1$ flow generated by a vector field $X\in\mathfrak{X}^1(M)$ on a compact $d$-dimensional Riemannian manifold $M$, and let $\mu$ be an ergodic hyperbolic invariant measure which is not supported on any singularity. If the Oseledec splitting of $\mu$ is a dominated splitting {\rm (}see Definition \ref{OSDS}{\rm )}, then there exists a sequence of periodic measures converging to $\mu$ in the weak$^*$ topology.   
\end{theoremalph}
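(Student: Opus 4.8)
The plan is to follow the Katok-type periodic approximation scheme adapted to the $C^1$ setting, where the lack of Pesin-theoretic distortion control is compensated by the dominated splitting hypothesis, and to deal with the possible accumulation on singularities by working with the (scaled) linear Poincar\'e flow. First I would use the Oseledec theorem together with the hyperbolicity of $\mu$ to write $T_xM=E^s(x)\oplus\langle X(x)\rangle\oplus E^u(x)$ at $\mu$-a.e.\ $x$, where $E^s$ (resp.\ $E^u$) collects the negative (resp.\ positive) Lyapunov exponents and $\langle X(x)\rangle$ carries the zero exponent. The hypothesis that this Oseledec splitting is dominated (Definition \ref{OSDS}) upgrades it to a continuous dominated splitting $\mathcal N_\Lambda=\Delta^s\oplus\Delta^u$ of the normal bundle over the compact invariant set $\Lambda=\mathrm{supp}(\mu)$ for the linear Poincar\'e flow (Definition \ref{Def:linearPoincare}), with uniform domination constants. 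Since $\mu(\mathrm{Sing})=0$ the linear Poincar\'e flow is defined $\mu$-a.e., but $\Lambda$ may still accumulate on singularities, so the object one actually manipulates is the scaled linear Poincar\'e flow, and the scaling factor $\lVert X(\varphi_t x)\rVert/\lVert X(x)\rVert$ must be tracked along orbits.

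Next, combining Birkhoff's ergodic theorem with the Pliss lemma, I would produce, for each $\varepsilon>0$, a set of positive $\mu$-measure of points $x$ admitting arbitrarily large times $T_n\to\infty$ such that: (i) the empirical measure $\tfrac1{T_n}\int_0^{T_n}\delta_{\varphi_s(x)}\,ds$ is $\varepsilon$-close to $\mu$ in the weak$^*$ topology; (ii) $\varphi_{T_n}(x)$ is within $\varepsilon$ of $x$ (Poincar\'e recurrence); (iii) the orbit segment $\{\varphi_s(x):0\le s\le T_n\}$ is $\lambda$-quasi-hyperbolic for the splitting $\Delta^s\oplus\Delta^u$, i.e.\ it exhibits uniform contraction on $\Delta^s$ and expansion on $\Delta^u$ at a string of (scaled) Poincar\'e-hyperbolic times, with both endpoints being such times; and (iv) the segment does not spend an excessive proportion of its length near $\mathrm{Sing}$, so $\lVert X\rVert$ stays controlled on average. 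Then I would apply a Liao-type shadowing/closing lemma for the scaled linear Poincar\'e flow to this quasi-hyperbolic segment with nearby endpoints, obtaining a genuine periodic orbit $\gamma_n$ of $\varphi_t$ that $\varepsilon_n$-shadows the segment with $\varepsilon_n\to 0$ as $\varepsilon\to 0$; domination is precisely what makes this work in class $C^1$. Because $\gamma_n$ shadows the segment and has comparable period, its atomic invariant measure is $O(\varepsilon)$-close to the empirical measure in (i), hence $O(\varepsilon)$-close to $\mu$; letting $\varepsilon\to 0$ along a sequence yields periodic measures converging to $\mu$ in the weak$^*$ topology.

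I expect the main obstacle to be the closing step executed in the presence of singularities: verifying that the quasi-hyperbolic estimates for the scaled linear Poincar\'e flow survive passage near singularities, and that Liao's shadowing lemma can be invoked with uniform constants, requires the time-near-singularities control in (iv) together with careful bookkeeping of the scaling factor. A secondary (and more routine) point is checking that the closed orbit $\gamma_n$ is itself hyperbolic, so that its measure is a genuine hyperbolic periodic measure; this again follows from the domination together with the quasi-hyperbolicity of the shadowed segment.
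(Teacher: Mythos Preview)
Your proposal is correct and follows essentially the same route as the paper: dominated Oseledec splitting on the normal bundle for the scaled linear Poincar\'e flow, Birkhoff plus Poincar\'e recurrence to produce long quasi-hyperbolic returning segments, Liao's scaled shadowing lemma (Theorem~\ref{Shadow}) to close them, and a direct comparison of the empirical and periodic measures in the weak$^*$ metric. Two minor deviations from the paper's execution are worth noting: the paper does not invoke the Pliss lemma but instead builds Pesin blocks via Lemma~\ref{time} and absorbs the nonuniformity constant by passing to a longer time step $T=j_0T_0$; and your condition (iv) on time spent near singularities is unnecessary, since Liao's scaled shadowing only requires the \emph{endpoints} of the segment to lie a fixed distance $\alpha>0$ from $\mathrm{Sing}(X)$, a condition already built into the Pesin block.
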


   Regarding Question  \textcolor{red}{$Q_2$}, results are so far available only for $3$-dimensional $C^1$ star vector fields, due to Lu and Wu \cite{LW2025}. In the present work, building on Theorem \ref{ThmA}, we establish that the Lyapunov exponents of every ergodic hyperbolic invariant measure can be approximated by those of hyperbolic periodic measures. The Lyapunov exponents of a periodic measure concentrated on a periodic orbit with period $\pi$ are exactly the logarithm of the norms of the absolute values of eigenvalues of $\Phi_\pi$. By the Poincar\'{e} Recurrence Theorem, the Lyapunov exponent corresponding to the flow direction is always zero for the tangent flow $\Phi_t$. Therefore, we restrict our attention to the Lyapunov exponents of the (scaled) linear Poincar\'{e} flow (see Deﬁnition \ref{Def:linearPoincare}) with respect to the ergodic hyperbolic invariant measure.    
\begin{theoremalph}\label{ThmB}
   Let $\varphi_t$ be the $C^1$ flow generated by a vector field $X\in\mathfrak{X}^1(M)$ on a compact $d$-dimensional Riemannian manifold $M$, and let $\mu$ be an ergodic hyperbolic invariant measure which is not supported on any singularity, with Lyapunov exponents $\lambda_1\leq\lambda_2\leq\cdots\leq\lambda_{d-1}$ of the {\rm (}scaled{\rm )} linear Poincar\'{e} flow. If the Oseledec splitting of the {\rm (}scaled{\rm )} linear Poincar\'{e} flow with respect to $\mu$ is a dominated splitting {\rm (}see Definition \ref{OSDS}{\rm )}, then the Lyapunov exponents of $\mu$ can be approximated by the Lyapunov exponents of hyperbolic periodic measures. To be precise, for every $\varepsilon>0$, there exists a hyperbolic periodic point $p$ with Lyapunov exponents $\lambda_1(p)\leq\lambda_2(p)\leq\cdots\leq\lambda_{d-1}(p)$ such that $$\lvert\lambda_i-\lambda_i(p)\rvert<\varepsilon,~i=1,2,\cdots,d-1.$$  
\end{theoremalph}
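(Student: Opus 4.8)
The plan is to upgrade the weak$^*$ convergence of periodic measures provided by Theorem \ref{ThmA} to convergence of the Lyapunov exponents themselves, using the domination hypothesis as a source of upper/lower semicontinuity of the extremal exponents. First I would fix notation and extract the structure I need. Write $\Lambda=\mathrm{supp}(\mu)$: since $\mu$ is not supported on singularities, $\Lambda$ is a compact $\varphi_t$-invariant set with $\Lambda\cap\mathrm{Sing}(X)=\emptyset$, so the scaled linear Poincar\'e flow $\psi^*_t$ of Definition \ref{Def:linearPoincare} is well defined on the normal bundle $N$ over a neighborhood of $\Lambda$. Let $\chi_1<\cdots<\chi_k$ be the distinct Lyapunov exponents of $\mu$ for $\psi^*_t$, with multiplicities $d_1,\dots,d_k$ (so $\sum_i d_i=d-1$ and $\lambda_1\le\cdots\le\lambda_{d-1}$ lists each $\chi_i$ exactly $d_i$ times), and put $D_i=d_1+\cdots+d_i$, $D_0=0$. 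By hypothesis and Definition \ref{OSDS}, the Oseledec splitting $N|_\Lambda=N_1\oplus\cdots\oplus N_k$, $\dim N_i=d_i$, is a dominated splitting for $\psi^*_t$. Using the standard fact that a dominated splitting on a compact invariant set extends --- with the same number of bundles, the same dimensions and the same domination constants --- to the maximal invariant set $\Lambda_U=\bigcap_{t\in\mathbb{R}}\varphi_t(U)$ of a small compact neighborhood $U$ of $\Lambda$ disjoint from $\mathrm{Sing}(X)$, I would denote this continuous $\psi^*_t$-invariant extension by $N|_{\Lambda_U}=\widehat N_1\oplus\cdots\oplus\widehat N_k$. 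Finally, re-reading the construction behind Theorem \ref{ThmA} --- the closed orbits there are built by shadowing long orbit arcs of a $\mu$-generic point, hence can be taken inside $U$ --- I would fix a sequence of hyperbolic periodic points $p_n$ with orbits $\gamma_n\subset\Lambda_U$ and periodic measures $\mu_n\to\mu$ in the weak$^*$ topology. Then each $\gamma_n$ carries the restricted dominated splitting $\widehat N_1\oplus\cdots\oplus\widehat N_k$; and for any ergodic measure supported in $\Lambda_U$, domination of $\widehat N_i$ over $\widehat N_{i+1}$ forces every Lyapunov exponent on $\widehat N_i$ to be strictly below every one on $\widehat N_{i+1}$, so the exponents of $\mu_n$ lying in $\widehat N_i$ occupy precisely the positions $D_{i-1}+1,\dots,D_i$ of the increasing list $\lambda_1(p_n)\le\cdots\le\lambda_{d-1}(p_n)$.

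The core step is then to show that, for each block $i$, \emph{all} Lyapunov exponents of $\mu_n$ on $\widehat N_i$ converge to $\chi_i$. For an invariant Borel probability $\nu$ supported in $\Lambda_U$ and $T>0$ I would set
$$a_T(\nu)=\frac1T\int\log\big\lVert\psi^*_T|_{\widehat N_i(x)}\big\rVert\,d\nu(x),\qquad b_T(\nu)=\frac1T\int\log\mathfrak{m}\big(\psi^*_T|_{\widehat N_i(x)}\big)\,d\nu(x),$$
where $\mathfrak{m}(\cdot)$ denotes the co-norm (smallest singular value). Since $\widehat N_i$ is continuous and $\psi^*_t$ is continuous and uniformly bounded above and below on the compact set $\Lambda_U\subset M\setminus\mathrm{Sing}(X)$, the integrands are continuous and bounded, so $\nu\mapsto a_T(\nu)$ and $\nu\mapsto b_T(\nu)$ are weak$^*$-continuous. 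The flow-cocycle relation together with invariance of $\nu$ makes $T\mapsto Ta_T(\nu)$ subadditive and $T\mapsto Tb_T(\nu)$ superadditive; by Fekete's lemma and Kingman's subadditive ergodic theorem, $\beta_i(\nu):=\inf_{T>0}a_T(\nu)=\lim_{T\to\infty}a_T(\nu)$ is the largest Lyapunov exponent of $\nu$ on $\widehat N_i$, and $\gamma_i(\nu):=\sup_{T>0}b_T(\nu)=\lim_{T\to\infty}b_T(\nu)$ is the smallest. As an infimum of weak$^*$-continuous functions $\beta_i$ is upper semicontinuous, and as a supremum $\gamma_i$ is lower semicontinuous. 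Because the Oseledec splitting of $\mu$ \emph{is} the dominated splitting, $\mu$ carries the single exponent $\chi_i$ on $N_i$, so $\beta_i(\mu)=\gamma_i(\mu)=\chi_i$; consequently
$$\limsup_{n\to\infty}\beta_i(\mu_n)\le\chi_i\le\liminf_{n\to\infty}\gamma_i(\mu_n)\le\liminf_{n\to\infty}\beta_i(\mu_n),$$
using $\gamma_i(\mu_n)\le\beta_i(\mu_n)$ in the last step. Hence $\lim_n\beta_i(\mu_n)=\lim_n\gamma_i(\mu_n)=\chi_i$, and since every Lyapunov exponent of $\mu_n$ on $\widehat N_i$ lies between $\gamma_i(\mu_n)$ and $\beta_i(\mu_n)$, all of them tend to $\chi_i$.

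To conclude, given $\varepsilon>0$ I would choose $\varepsilon'<\min\{\varepsilon,\tfrac12\min_i(\chi_{i+1}-\chi_i)\}$ and then, by the previous paragraph, an index $n$ so large that every Lyapunov exponent of $\mu_n$ on each $\widehat N_i$ lies within $\varepsilon'$ of $\chi_i$. Combined with the position count noted at the end of the first paragraph, this yields $\lvert\lambda_j(p_n)-\lambda_j\rvert\le\varepsilon'<\varepsilon$ for every $j=1,\dots,d-1$: indeed, for $D_{i-1}<j\le D_i$ both $\lambda_j(p_n)$ and $\lambda_j=\chi_i$ lie within $\varepsilon'$ of $\chi_i$. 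Taking $p=p_n$ finishes the proof.

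The main obstacle I anticipate is not the semicontinuity/squeeze argument, which is soft, but ensuring its hypotheses are legitimate. The cocycles $x\mapsto\log\lVert\psi^*_T|_{\widehat N_i(x)}\rVert$ are defined only where the extended dominated splitting lives, so I genuinely need the periodic orbits of Theorem \ref{ThmA} to sit inside the neighborhood $U$, and weak$^*$ convergence of $\mu_n$ alone does not preclude a small-measure excursion of $\gamma_n$ far from $\mathrm{supp}(\mu)$, possibly close to a singularity where $\psi^*_t$ degenerates. Thus the real work is to revisit the construction in Theorem \ref{ThmA} and confirm that its shadowing orbits stay in any prescribed neighborhood of $\mathrm{supp}(\mu)$ --- which they do, being built from orbit arcs of a $\mu$-generic point --- and, in passing, to verify the (standard but slightly delicate for the linear Poincar\'e flow) extension of a dominated splitting from a compact invariant set to a neighborhood with control of the domination constants.
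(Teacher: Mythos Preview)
Your argument has a genuine gap at the very first step: the inference ``$\mu$ is not supported on singularities, hence $\mathrm{supp}(\mu)\cap\mathrm{Sing}(X)=\emptyset$'' is false in general. ``Not supported on singularities'' means only that $\mu(\mathrm{Sing}(X))=0$; the closed set $\mathrm{supp}(\mu)$ may still contain singularities. This is precisely the phenomenon the paper emphasizes in its introduction with the geometric Lorenz attractor: regular orbits carrying full measure accumulate on a hyperbolic singularity, so that singularity lies in $\mathrm{supp}(\mu)$. Once $\mathrm{supp}(\mu)$ meets $\mathrm{Sing}(X)$, every subsequent step of your plan collapses: you cannot choose a compact neighborhood $U$ of $\mathrm{supp}(\mu)$ disjoint from $\mathrm{Sing}(X)$; the linear Poincar\'e flow is not defined on all of $\Lambda$; the standard ``dominated splitting extends to a neighborhood'' lemma (which requires a compact invariant set in $M\setminus\mathrm{Sing}(X)$) is unavailable; and the integrands $x\mapsto\log\lVert\psi^*_T|_{\widehat N_i(x)}\rVert$ are no longer continuous bounded functions on a compact set containing all the $\gamma_n$, so the weak$^*$-continuity of $a_T,b_T$ fails. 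Your closing paragraph worries about excursions of $\gamma_n$ \emph{away} from $\mathrm{supp}(\mu)$, but the real obstruction is already inside $\mathrm{supp}(\mu)$. Note also that the periodic orbits from Liao's shadowing (Theorem~\ref{Shadow}) are close to the generic orbit only in the \emph{scaled} sense $d(\varphi_t(y),\varphi_{\theta(t)}(p))<\varepsilon\lvert X(\varphi_t(y))\rvert$, so when the orbit of $y$ passes near a singularity the periodic orbit may do so as well; you cannot confine it to any fixed neighborhood away from $\mathrm{Sing}(X)$.

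The paper's proof is organized precisely to avoid ever needing such a neighborhood. It recovers uniform continuity via the compactified extended linear Poincar\'e flow $\widetilde\psi_t$ on $\widetilde\Lambda\subset SM$, and then controls the exponents of each shadowed periodic orbit \emph{directly} through an explicit factorization $\psi^*_{\theta(lT)}|_{\mathcal N_p}=\psi^*_T\circ T_{l-1}\circ\cdots\circ\psi^*_T\circ T_0$ with $\lvert\log\lVert T_i\rVert\rvert<\epsilon/2$, combined with a cone-invariance argument coming from domination (Propositions~\ref{TLLE}--\ref{TSLE}). The intermediate exponents are handled by running the same machinery on the exterior powers $\wedge^n(\psi^*_t)$ (Propositions~\ref{ITLLE}--\ref{ITSLE}). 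Your semicontinuity/squeeze route is attractive and would work cleanly in the non-singular case $\mathrm{supp}(\mu)\cap\mathrm{Sing}(X)=\emptyset$, but to cover the stated generality you would first have to transplant the whole argument to the blow-up $\widetilde\Lambda$: lift the dominated Oseledec splitting there, extend it to a neighborhood of $\widetilde\Lambda$ in $SM$, and lift the periodic measures $\mu_n$ to measures on $SM$ that still converge weak$^*$. None of that is addressed, and it is not automatic.
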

  
   One of the main difficulties in proving Theorems \ref{ThmA} and \ref{ThmB} is the presence of singularities. Even in the absence of singularities, we cannot directly apply the standard Pesin theory as in Lian and Young \cite{Lian} because the vector field is merely $C^1$. It is well known that $C^1$ systems possess properties fundamentally different from those of $C^{1+\alpha}(\alpha>0)$ systems. For instance, in $C^1$ nonuniform hyperbolic systems, the stable and unstable manifolds may fail to exist in two distinct ways: they may not exist at all (see \cite{PC84}), or they may exist but fail to be absolutely continuous (see \cite{SW00}). The main challenge is to overcome the difficulties caused by singularities and $C^1$ differentiability. Liao's shadowing lemma for singular flows \cite{LST85} furnishes a technique for identifying periodic points. Nevertheless, unlike the situation for diffeomorphisms, we are unable to use it to derive estimates for the Lyapunov exponents corresponding to periodic measures of the flow. The outline of the paper is as follows. Section \ref{P} covers the necessary background on vector fields. Section \ref{LP} details the Lyapunov metric and the shadowing lemma. Sections \ref{MA} and \ref{PF} are devoted to proving Theorems \ref{ThmA} and \ref{ThmB}, respectively.   
        
\section{Preliminaries}\label{P}	

\subsection{Basic Contents of Vector Fields}
   Let $M^d$ be a compact $d$-dimensional $C^\infty$ Riemannian manifold without boundary, we will write it simply $M$ when no confusion can arise. Denote by $\mathfrak{X}^r(M)(r\geq 1)$ the space of all $C^r$ vector fields on $M$. Given $X\in\mathfrak{X}^1(M)$, a point $x\in M$ is called a \emph{singularity} if $X(x)=0$. Denote by ${\rm Sing}(X)$ the set of all singularities. A point $x$ is \emph{regular} if $X(x)\neq 0$. Let $\varphi^X_t=(\varphi_t)_{t\in\mathbb{R}}$ be the $C^1$ flow generated by a vector field $X$, for simplicity, denoted by $\varphi_t$. A regular point $p$ is \emph{periodic}, if $\varphi_{t_0}(p)=p$ for some $t_0>0$. A critical point is either a singularity or a periodic point. Denote the \emph{normal bundle} of $X$ by $$\mathcal{N}\triangleq\bigcup\limits_{x\in M\setminus{\rm Sing}(X)}\mathcal{N}_x,$$ where $\mathcal{N}_{x}$ is the orthogonal complement space of the flow direction $X(x)$, i.e., $$\mathcal{N}_{x}=\{v\in T_{x}M: v\perp X(x)\}.$$ For the flow $\varphi_t$ generated by $X$, its derivative with respect to the space variable is called the \emph{tangent flow} and is denoted by $\Phi_t={\rm d}\varphi_t$.
     
\begin{Definition}\label{Def:linearPoincare}
   Given $x\in M\setminus{\rm Sing}(X),v\in\mathcal{N}_x$ and $t\in\mathbb{R}$, the \emph{\bf linear Poincar\'{e} flow} $$\psi_t:\mathcal{N}\to\mathcal{N}$$ is defined as $$\psi_t(v)\triangleq\Phi_t(v)-\frac{\langle\Phi_t(v),X(\varphi_t(x)) \rangle}{\left\lVert X(\varphi_t(x))\right\rVert^2} X(\varphi_t(x)).$$ Namely, $\psi_t(v)$ is the orthogonal projection of $\Phi_t(v)$ on $\mathcal{N}_{\varphi_t(x)}$ along the flow direction $X(\varphi_t(x))$.  
\end{Definition}  
     
   Fix $T>0$, the norm $$\lVert\psi_T\rVert=\sup\left\{\lvert\psi_T(v)\rvert:~v\in\mathcal{N},~\lvert v\rvert=1\right\}$$ is uniformly upper bounded on $\mathcal{N}$, although $M\setminus{\rm Sing}(X)$ may be not compact. Denote by $$m(\psi_T)=\inf\left\{\lvert\psi_T(v)\rvert:~v\in\mathcal{N},~\lvert v\rvert=1\right\}$$ the mininorm of $\psi_T$. Since $$m(\psi_T)=\lVert\psi_T^{-1}\rVert^{-1}=\lVert\psi_{-T}\rVert^{-1},$$ the mininorm $m(\psi_T)$ is uniformly bounded away from $0$ on $\mathcal{N}$. Another useful flow is the \textbf{ scaled linear Poincar\'{e} flow} $\psi_t^*:\mathcal{N}\to\mathcal{N}$, which is defined as $$\psi^*_t(v)\triangleq\frac{\left\lVert X(x)\right\rVert}{\left\lVert X(\varphi_t(x))\right\rVert}\psi_t(v)=\frac{\psi_t(v)}{\left\lVert\Phi_t|_{\langle X(x)\rangle}\right\rVert},$$ where $x\in M\setminus{\rm Sing}(X),~v\in\mathcal{N}_x$ and $\langle X(x)\rangle$ is the $1$-dimensional subspace of $T_xM$ generated by the flow direction $X(x)$.
   
   The linear Poincar\'{e} flow $\psi_t$ loses the compactness due to the existence of singularities. To overcome this difficulty, the linear Poincar\'{e} flow can also be defined in a more general way by Liao \cite{Liao89}. Li, Gan and Wen \cite{LGW05} used the terminology of \textquotedblleft extended linear Poincar\'{e} flow\textquotedblright. For every point $x\in M$, the sphere fiber at $x$ is defined as $$S_xM=\{v:~v\in T_xM,~\lvert v\rvert=1\}.$$ Then, the sphere bundle $$SM=\bigcup_{x\in M}S_xM$$ is compact. For every $v\in SM$, one can define the \emph{\bf unit tangent flow} $$\Phi_t^I: SM\rightarrow SM$$ as $$\Phi_t^I(v)=\dfrac{\Phi_t(v)}{\lvert\Phi_t(v)\rvert}.$$ Given a compact invariant set $\Lambda$ of the flow $\varphi_t$, denote by $$\widetilde{\Lambda}=\text{Closure}\left(\bigcup_{x\in\Lambda\setminus\text{Sing}(X)}\dfrac{X(x)}{\lvert X(x)\rvert}\right)$$ in $SM$. Thus the essential difference between $\widetilde{\Lambda}$ and $\Lambda$ is on the singularities. We can get more information on $\widetilde{\Lambda}$: it tells us how regular points in $\Lambda$ accumulate singularities. For each $x\in M$, and any two orthogonal vectors $v_1\in S_xM$, $v_2\in T_xM$, one can define $$\Theta_t(v_1,v_2)=\left(\Phi_t(v_1),\Phi_t(v_2)-\frac{\langle\Phi_t(v_1),\Phi_t(v_2)\rangle}{\left\lVert\Phi_t(v_1)\right\rVert^2} \Phi_t(v_1)\right).$$ By the definition, the two components of $\Theta_t$ are still orthogonal. If we denote $$\Theta_t=\left(\text{Proj}_1(\Theta_t),\text{Proj}_2(\Theta_t)\right),$$ then for each regular point $x\in M$ and every vector $v\in \mathcal{N}_x$, one has that $$\psi_t(v)=\text{Proj}_2(\Theta_t)(X(x),v).$$ By the continuity of $\Theta_t$, one can extend the definition of $\psi_t$ to singularities: for every vector $u\in\widetilde{\Lambda}$, one can define $$\widetilde{\mathcal{N}}_u=\{v\in T_{\pi(u)}M:~v\bot u\}.$$ Then, $\widetilde{\mathcal{N}}$ is a $(d-1)$-dimensional vector bundle on the base space $\widetilde{\Lambda}$. For every $u\in\widetilde{\Lambda}$, and every $v\in\widetilde{\mathcal{N}}_u$, one can define a flow $$\widetilde{\psi}_t(v)=\text{Proj}_2(\Theta_t)(u,v).$$ Then, the linear Poincar\'{e} flow $\psi_t$ can be \textquotedblleft embedded\textquotedblright in the flow $\Theta_t$. By the definition, $\text{Proj}_2(\Theta_t)$ is a continuous flow defined on $\widetilde{\Lambda}$. Thus, $\widetilde{\psi}_t(v)$ varies continuously with respect to the vector field $X$, the time $t$ and the vector $v$ and can be viewed as a compactification of $\psi_t$.   
   
   For every $x\in M\setminus{\rm Sing}(X)$ and each sufficiently small $\delta>0$, the \textbf{ normal manifold} of $x$ is defined as $$N_x(\delta)=\text{exp}_x\left(\mathcal{N}_x(\delta)\right),$$ where $\mathcal{N}_x(\delta)=\left\{v\in\mathcal{N}_x:~\lvert v\rvert\leq\delta\right\}$. For sufficiently small $\delta>0$, $N_x(\delta)$ is an embedded submanifold which is diffeomorphic to $\mathcal{N}_x$. Furthermore, $N_x(\delta)$ is a local cross section transverse to the flow. To study the dynamics in a small neighborhood of a periodic orbit of a vector field, Poincar\'{e} introduced the sectional return map of a cross section of a periodic point. By generalizing this idea to every regular point, one can define the sectional Poincar\'{e} map between any two cross sections at any two points in the same regular orbit. For every $T>0$ and $x\in M\setminus{\rm Sing}(X)$, the flow $\varphi_t$ induces a local holonomy map, called the \textbf{ Poincar\'{e} map} $$P_{x,\varphi_T(x)}:N_x(\delta)\rightarrow N_{\varphi_T(x)}(\delta'),$$ where $\delta$ and $\delta'$ depend on the choice of $x$ and $T$. The \textbf{ sectional Poincar\'{e} map} is then defined as the lift of Poincar\'{e} map $P_{x,\varphi_T(x)}$ to the normal bundle: $$\mathcal{P}_{x,\varphi_T(x)}={\rm exp}^{-1}_{\varphi_T(x)}\circ P_{x,\varphi_T(x)}\circ {\rm exp}_x:~\mathcal{N}_x(\delta)\rightarrow\mathcal{N}_{\varphi_T(x)}(\delta').$$ To eliminate the influence of flow speed, we also consider the \emph{\bf scaled sectional Poincar\'{e} map} $\mathcal{P}^*_{x,\varphi_T(x)}$ which is defined by $$\mathcal{P}^*_{x,\varphi_T(x)}=\dfrac{\mathcal{P}_{x,\varphi_T(x)}}{\lvert X(\varphi_T(x))\rvert}.$$ Moreover, one has $$D_x\mathcal{P}_{x,\varphi_T(x)}=\psi_T|_{\mathcal{N}_x}:\mathcal{N}_x\rightarrow\mathcal{N}_{\varphi_T(x)},\quad D_x\mathcal{P}^*_{x,\varphi_T(x)}=\psi^*_T|_{\mathcal{N}_x}:\mathcal{N}_x\rightarrow\mathcal{N}_{\varphi_T(x)}.$$ 
  
   For the sectional Poincar\'{e} map, we have the following three lemmas \cite{GY,WYZ} which respectively explain the rationality of the definition, establish the uniform continuity of Poincar\'{e} map up to flow speed, and provide an estimate for the return time.    
\begin{Lemma}\label{well-def}{\rm(\cite[Lemma 2.3]{GY})} 
   Given $X\in\mathfrak{X}^1(M)$ and $T>0$, there exists $\beta_T>0$ such that for every $x\in M\setminus{\rm Sing}(X)$, the Poincar\'{e} map $$\mathcal{P}_{x,\varphi_T(x)}:N_x\left(\beta_T\lvert X(x)\rvert\right)\to N_{\varphi_T(x)}\left(\beta_TC_T\lvert X(\varphi_T(x))\rvert\right)$$ is well defined, where $C_T=\lVert\psi_T\rVert$ is a bounded constant. 
\end{Lemma}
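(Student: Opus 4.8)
The plan is to build the Poincar\'e map by hand with a quantitative implicit function theorem, keeping every estimate homogeneous of degree one in $|X(x)|$; the factor $|X(x)|$ in the radius is forced because $\mathcal N$ is not compact, so no disk of a fixed absolute radius can work near ${\rm Sing}(X)$. First I would record the uniform constants. Since $X\in\mathfrak X^1(M)$ and $M$ is compact, $X$ is $L$-Lipschitz for some $L>0$; by compactness of $M$ and continuity of $(s,v)\mapsto\Phi_s(v)$ there are constants $0<\kappa\le 1\le K<\infty$ with $\kappa|v|\le|\Phi_s(v)|\le K|v|$ for all $v\in TM$, $|s|\le 2T$. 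Because $\Phi_s(X(x))=X(\varphi_s(x))$, this gives the flow-speed comparison
$$\kappa\,|X(x)|\ \le\ |X(\varphi_s(x))|\ \le\ K\,|X(x)|,\qquad x\notin{\rm Sing}(X),\ |s|\le 2T,$$
which is the device that will compensate for the non-compactness. Let $\Lambda=\Lambda_T\ge 1$ be a Gronwall constant with $d(\varphi_s(y),\varphi_s(x))\le\Lambda\,d(x,y)$ for $0\le s\le 2T$, let $r_0>0$ bound the injectivity radius from below, and recall $C_T=\|\psi_T\|<\infty$ from the text. It suffices to produce a uniform $\beta_T>0$ for which the map is well defined with image near $\varphi_T(x)$; that the image sits inside $N_{\varphi_T(x)}(\beta_TC_T|X(\varphi_T(x))|)$ is then bookkeeping, using that $|X|$ is comparable along the orbit and that $D_x\mathcal P_{x,\varphi_T(x)}=\psi_T$ has norm $\le C_T$.

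Fix $x\notin{\rm Sing}(X)$, put $q=\varphi_T(x)$, and for $y=\exp_x(v)$ with $|v|\le\beta_T|X(x)|$ consider the signed height of the orbit of $y$ over $N_q$,
$$\Psi(y,s)\ =\ \Big\langle\,\exp_q^{-1}\big(\varphi_s(y)\big),\ \tfrac{X(q)}{|X(q)|}\,\Big\rangle .$$
By Gronwall $d(\varphi_s(y),\varphi_s(x))\le\Lambda\beta_T|X(x)|$ for $0\le s\le 2T$, while the speed comparison gives $d(\varphi_s(x),q)\le 2|s-T|\,|X(q)|$ whenever $|s-T|\le\epsilon_0$ for any $\epsilon_0\le\min\{(2L)^{-1},T\}$; hence $d(\varphi_s(y),q)\le(\Lambda\beta_T/\kappa+2\epsilon_0)|X(q)|$ on $|s-T|\le\epsilon_0$. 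Choosing $\epsilon_0$ and $\beta_T$ small enough that this is $\le c\,|X(q)|$ for a suitable uniform $c=c(L,\sup|X|,r_0,\text{curvature})$, one checks — using $X$ Lipschitz to control $\langle X(\varphi_s(y))-X(q),X(q)\rangle$ and the curvature bound to control $\|D\exp_q^{-1}-\mathrm{Id}\|$ — that $\partial_s\Psi(y,s)\ge\tfrac12|X(q)|>0$ throughout $|s-T|\le\epsilon_0$. Since $\Psi(x,T)=0$, we get $|\Psi(y,T)|\le d(\varphi_T(y),q)\le\Lambda\beta_T|X(x)|$, so as soon as $\tfrac12|X(q)|\,\epsilon_0>\Lambda\beta_T|X(x)|$, i.e. $\beta_T<\kappa\epsilon_0/(2\Lambda)$, the strictly increasing $s\mapsto\Psi(y,s)$ has a unique zero $\tau(y)\in(T-\epsilon_0,T+\epsilon_0)$, with $|\tau(y)-T|\le 2\Lambda\beta_T/\kappa$. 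I would then set $P_{x,\varphi_T(x)}(y):=\varphi_{\tau(y)}(y)$ and $\mathcal P_{x,\varphi_T(x)}=\exp_q^{-1}\circ P_{x,\varphi_T(x)}\circ\exp_x$; smoothness of $\tau(\cdot)$, hence of $\mathcal P_{x,\varphi_T(x)}$, is the implicit function theorem applied to $\Psi$, whose $s$-derivative was just bounded away from $0$.

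Transversality of the crossing comes for free: at time $\tau(y)$ the velocity $X\big(\varphi_{\tau(y)}(y)\big)$ makes an angle $O(\beta_T)$ with $X(q)$ (both points lie where $d(\cdot,q)\le c|X(q)|$ and $X$ is Lipschitz), hence is transverse to $N_q$, which is orthogonal to $X(q)$; the same estimate shows $N_x(\beta_T|X(x)|)$ is an embedded local cross section transverse to the flow, so the construction genuinely defines the Poincar\'e map. Finally $d\big(P_{x,\varphi_T(x)}(y),\varphi_T(x)\big)\le c\,|X(\varphi_T(x))|$ by construction, and in the exponential chart at $\varphi_T(x)$ the asserted image radius follows from $\mathcal P_{x,\varphi_T(x)}(0)=0$ together with $\|D_v\mathcal P_{x,\varphi_T(x)}\|\le C_T+O(\beta_T)$ on the small disk — the differential is $\psi_T$ up to $O(\beta_T)$ corrections from $|\tau-T|=O(\beta_T)$ and from the chart — via the mean value inequality and a last shrinking of $\beta_T$. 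The main obstacle, exactly as flagged in the introduction, is this homogeneity constraint: near a singularity $|X(x)|$ is arbitrarily small, so $\mathcal N$ is non-compact and disks of fixed absolute size are useless; every estimate above is homogeneous of degree one in $|X(x)|$ except ``the orbit of $x$ stays within $c|X(q)|$ of $q$ over a uniform time window'', and that one is saved only by the speed comparison $\kappa|X(x)|\le|X(\varphi_s(x))|\le K|X(x)|$, which is why $\epsilon_0$ may be taken independent of $x$. The $C^1$ hypothesis enters only to make $X$ Lipschitz, so that $X$ does not turn appreciably over distance $\beta_T|X(x)|$; for merely continuous $X$ one would be pushed down to radius of order $\omega_X^{-1}(|X(x)|)$, with $\omega_X$ the modulus of continuity of $X$.
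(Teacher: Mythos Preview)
The paper does not prove this lemma; it is quoted from \cite[Lemma 2.3]{GY} without argument, so there is no in-paper proof to compare against. Your proposal supplies a self-contained and essentially correct proof, and the strategy --- build the hitting time as the unique zero of a signed height function $\Psi(y,s)$, keeping every estimate homogeneous of degree one in $|X(x)|$ via the flow-speed comparison $\kappa|X(x)|\le|X(\varphi_s(x))|\le K|X(x)|$ that comes from $\Phi_s(X(x))=X(\varphi_s(x))$ and the uniform bounds on $\Phi_s$ --- is the standard one and almost certainly what \cite{GY} does.

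One bookkeeping point worth tightening: your image-radius estimate reads $\|D_v\mathcal P_{x,\varphi_T(x)}\|\le C_T+O(\beta_T)$ on the disk, whence by the mean value inequality the image of $\mathcal N_x(\beta_T|X(x)|)$ has radius at most $(C_T+O(\beta_T))\beta_T|X(x)|$. To land inside $\mathcal N_{\varphi_T(x)}(\beta_T C_T|X(\varphi_T(x))|)$ you still need to pass from $|X(x)|$ to $|X(\varphi_T(x))|$, and the speed comparison only gives $|X(x)|\le\kappa^{-1}|X(\varphi_T(x))|$, which goes the wrong way when $\kappa<1$. So as written the image sits in a ball of radius $(C_T+O(\beta_T))\kappa^{-1}\beta_T|X(\varphi_T(x))|$, not $C_T\beta_T|X(\varphi_T(x))|$. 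This is harmless --- either one enlarges $C_T$ to absorb the factor $\kappa^{-1}$ (the lemma is only ever used qualitatively, to say the Poincar\'e map is defined on a disk of radius proportional to $|X(x)|$), or one notes that the scaled Poincar\'e map $\mathcal P^*$ has derivative $\psi^*_T$ and works with that instead --- but you should flag it rather than call it pure bookkeeping.
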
     
     
\begin{Lemma}\label{uni-con}{\rm(\cite[Lemma 2.4]{GY})} 
   Given $X\in\mathfrak{X}^1(M)$ and $T>0$, and reducing $\beta_T>0$ in Lemma \ref{well-def} if necessary, for each $x\in M\setminus{\rm Sing}(X)$, for the Poincar\'{e} map $$\mathcal{P}_{x,\varphi_T(x)}:N_x\left(\beta_T\left\lvert X(x)\right\rvert\right)\to N_{\varphi_T(x)}\left(\beta_TC_T\left\lvert X(\varphi_T(x))\right\rvert\right),$$ $D\mathcal{P}_{x,\varphi_T(x)}$ is uniformly continuous in the following sense: for every $\epsilon>0$, there exists $\rho\in(0,\beta_T]$ such that for every $x\in M\setminus{\rm Sing}(X)$ and $y,~y'\in N_x\left(\delta_T\left\lvert X(x)\right\rvert\right)$, if $\left\lvert y-y'\right\rvert\leq\rho\lvert X(x)\rvert$, then $$\left\lvert D_y\mathcal{P}_{x,\varphi_T(x)}-D_{y'}\mathcal{P}_{x,\varphi_T(x)}\right\rvert<\epsilon.$$  	
\end{Lemma}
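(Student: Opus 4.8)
The plan is to write the derivative of the sectional Poincar\'{e} map as an explicit composition of elementary factors — two derivatives of exponential maps, the tangent flow $\Phi_{\tau}$ over a return time $\tau=\tau(y)$ close to $T$, and the linear projection of the full tangent space onto the target cross section along the flow direction — and then to check that, after rescaling the domain $N_x(\beta_T|X(x)|)$ to the fixed-size disc $N_x(\beta_T)$ by dividing by $|X(x)|$, each of these factors varies uniformly continuously in the rescaled point, with a modulus of continuity independent of the regular point $x$ (hence independent of ${\rm dist}(x,{\rm Sing}(X))$). Uniform continuity of the composition, together with the uniformly bounded operator norms furnished by Lemma \ref{well-def}, then produces the required $\rho$ and completes the proof.

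First I would fix $T>0$, write $q=\varphi_T(x)$, and for $y$ in the domain set $\eta=\exp_x(y)\in N_x$. Shrinking $\beta_T$ if necessary, Lemma \ref{well-def} ensures that the holonomy carries $\eta$ to $P_{x,q}(\eta)=\varphi_{\tau(\eta)}(\eta)\in N_q$, where $\tau(\eta)$ is the unique solution near $T$ of $\langle\exp_q^{-1}(\varphi_\tau(\eta)),X(q)\rangle=0$. Differentiating this relation (the flow is $C^1$) and using the implicit function theorem, one obtains
$$D_y\mathcal{P}_{x,q}=D_{P_{x,q}(\eta)}\exp_q^{-1}\circ\Pi_{P(\eta)}\circ\Phi_{\tau(\eta)}\circ D_y\exp_x,$$
where $\Pi_w$ denotes the projection of $T_wM$ onto $T_wN_q$ along $\langle X(w)\rangle$ (the derivative of the return time is absorbed into $\Pi_{P(\eta)}$), and the two middle maps are restricted to the relevant subspaces; at $y=0$ this reduces to $\psi_T|_{\mathcal{N}_x}$, in agreement with the identity recorded above. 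The only factor that looks dangerous near ${\rm Sing}(X)$ is $\tau$, because the implicit function theorem gives $\partial_\tau\langle\exp_q^{-1}\varphi_\tau(\eta),X(q)\rangle$ comparable to $|X(q)|^2$, so $\tau$ is Lipschitz in $\eta$ only with constant $O(C_T/|X(q)|)$; but $|\eta-\eta'|$ is comparable to $|y-y'|\le\rho|X(x)|$ uniformly, and $|X(q)|$ is comparable to $|X(x)|$ because $\Phi_{\pm T}$ have bounded norms, so $|\tau(\eta)-\tau(\eta')|=O(C_T\rho)$: the $|X|^{-1}$ is cancelled by the rescaling.

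Next I would establish uniform continuity of each factor under the hypothesis $|y-y'|\le\rho|X(x)|$. Since $X\in C^1$ and $M$ is compact: (i) $(t,z)\mapsto\Phi_t(z)$ is uniformly continuous on $[T-1,T+1]\times M$, so by the bound on $\tau$ above $\Phi_{\tau(\eta)}$ is close to $\Phi_{\tau(\eta')}$; (ii) $(z,v)\mapsto D_v\exp_z$ and $(z,w)\mapsto D_w\exp_z^{-1}$ are uniformly continuous on the relevant compact sets, while $|y-y'|$ and $|P(\eta)-P(\eta')|$ are small ($|P(\eta)-P(\eta')|=O(|X(q)|\rho)$ by the same flow estimate); (iii) the unit normal of the disc $N_q$ is uniformly continuous over the compact family of such discs, and although the normalized flow direction $X(w)/|X(w)|$ has differential of size $O(\|DX\|_{C^0}/|X(w)|)$ it is only evaluated for $w$ within $O(|X(q)|)$ of $q$, so on that set it oscillates by $O(\|DX\|_{C^0}\rho)$; combined with the transversality lower bound $|\langle X(w),n(w)\rangle|\ge\frac12|X(q)|$ (valid after shrinking $\beta_T$), this makes $\Pi_{P(\eta)}$ close to $\Pi_{P(\eta')}$. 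Feeding these three moduli into the composition — legitimate because all four factors have operator norms bounded uniformly in $x$ — gives $|D_y\mathcal{P}_{x,q}-D_{y'}\mathcal{P}_{x,q}|<\epsilon$ provided $\rho$ is small enough in terms of $T$ and $\epsilon$.

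The main obstacle is the content of (ii)--(iii): the singularities force $|X(x)|\to0$, so a priori both the return-time function and the oblique projection $\Pi_w$ degenerate, and the direction field $X/|X|$ is not even uniformly continuous on $M\setminus{\rm Sing}(X)$. The crux of the proof is precisely that, once the domain is rescaled by $|X(x)|$, every factor of $|X|^{-1}$ that appears is matched by a compensating factor $|X|$ coming from the size $\beta_T|X(x)|$ of the admissible neighbourhood; equivalently, all the objects in the formula for $D\mathcal{P}$ are controlled by the tangent flow $\Phi_t$ and by the compactification $\widetilde{M}$ of the direction field, over which the $C^1$ regularity of $X$ provides genuine uniform continuity.
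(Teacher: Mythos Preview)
The paper does not give its own proof of this lemma: it is quoted verbatim as \cite[Lemma~2.4]{GY} and used as a black box, so there is nothing in the present paper to compare your argument against.

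That said, your outline is essentially the standard proof (and matches what Gan--Yang do). You have correctly identified the only genuine issue, namely that near singularities the return time $\tau$ and the oblique projection $\Pi$ have derivatives blowing up like $|X|^{-1}$, and that this is exactly compensated by the scaling $|y-y'|\le\rho|X(x)|$ of the hypothesis. Your decomposition $D_y\mathcal{P}_{x,q}=D\exp_q^{-1}\circ\Pi\circ\Phi_\tau\circ D\exp_x$ is the right one, and items (i)--(iii) are the correct checklist. One small point worth making precise if you write this up: in (iii) you need not only that $\Pi_{P(\eta)}$ is close to $\Pi_{P(\eta')}$ as maps, but that they act on nearby subspaces (the tangent spaces to $N_q$ at $P(\eta)$ and $P(\eta')$) in a way that allows comparison; this is routine via parallel transport but should be said. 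Otherwise the argument is sound.
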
           
     
\begin{Lemma}\label{return}{\rm(\cite[Lemma 4.5]{WYZ})} 
   Given $X\in\mathfrak{X}^1(M)$, let $\beta=\beta_1$ and $C_1$ be the constants in Lemma \ref{well-def} associated to the time-$1$ map $\varphi_1$. Then by reducing $\beta>0$ if necessary, there exists $\kappa>0$ such that for every $x\in M\setminus{\rm Sing}(X)$ and $y\in N_x\left(\beta\left\lvert X(x)\right\rvert\right)$, there is a unique $t=t(y)\in(0,2)$ satisfying that $$\varphi_t(y)\in N_{\varphi_1(x)}\left(\beta C_1\left\lvert X(\varphi_1(x))\right\rvert\right)\quad\text{and}\quad\lvert t-1\rvert<\kappa d(x,y).$$   	
\end{Lemma}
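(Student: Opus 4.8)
\section*{Proof proposal}

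The plan is to realize the return time $t(y)$ as the unique zero of a single scalar equation and then apply the implicit function theorem, which is legitimate here because $X\in\mathfrak{X}^1(M)$ makes $\varphi_t$ --- and hence all the functions below --- of class $C^1$. Fix a regular point $x$ and write a nearby point as $y=\exp_x(v)$ with $v\in\mathcal{N}_x$, so that $d(x,y)=|v|$. Since $N_{\varphi_1(x)}(\delta)$ consists, by definition, of the points $z$ near $\varphi_1(x)$ with $\exp_{\varphi_1(x)}^{-1}(z)\perp X(\varphi_1(x))$, I would introduce
$$g(v,s)=\Big\langle\exp_{\varphi_1(x)}^{-1}\big(\varphi_s(\exp_x(v))\big),\,X(\varphi_1(x))\Big\rangle .$$
Then $g(0,1)=0$, and using $D\exp_w^{-1}|_{w}=\mathrm{Id}$ together with $\tfrac{d}{ds}\varphi_s=X\circ\varphi_s$ one computes $\partial_s g(0,1)=|X(\varphi_1(x))|^2$, which is nonzero since $x$, hence $\varphi_1(x)$, is regular. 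Lemma \ref{well-def} guarantees that the forward orbit of every $y\in N_x(\beta|X(x)|)$ meets $N_{\varphi_1(x)}(\beta C_1|X(\varphi_1(x))|)$; the implicit function theorem applied at $(0,1)$ then shows that the corresponding hitting time $t(y)$ is unique near $s=1$, depends $C^1$ on $y$, and satisfies $t(x)=1$. For the uniqueness of such a $t$ in the full interval $(0,2)$ I would run a uniform flow-box argument in coordinates rescaled by $|X(\varphi_1(x))|$: after this rescaling the flow near $\varphi_1(x)$ is, on a box of definite size and for rescaled time of definite length, a near-translation crossing the hyperplane $\mathcal{N}_{\varphi_1(x)}$ transversally, so an orbit that leaves this section cannot re-enter it. Shrinking $\beta$ once makes this simultaneous for all regular $x$.

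Next I would extract the quantitative estimate. Differentiating $g(v,t(v))\equiv 0$ and evaluating at $v=0$ gives
$$\partial_v t(0)\cdot w=-\frac{\langle\Phi_1 w,\,X(\varphi_1(x))\rangle}{|X(\varphi_1(x))|^2},\qquad w\in\mathcal{N}_x ,$$
and since $X(\varphi_1(x))=\Phi_1 X(x)$ the norm of this linear functional is at most $\|\Phi_1\|^2|X(x)|/|X(\varphi_1(x))|^2$. Writing $t(y)-1=\int_0^1\partial_v t(\tau v)\cdot v\,d\tau$ and controlling the $\tau$-dependence of $\partial_v t$ by the uniform continuity of $D\varphi_1$ supplied by Lemma \ref{uni-con} (after a further reduction of $\beta$ --- this is the only place where the merely $C^1$ hypothesis enters, replacing the Hölder control available in the $C^{1+\alpha}$ setting), I would obtain
$$|t(y)-1|\le\kappa_0\,\frac{\|\Phi_1\|^2\,|X(x)|}{|X(\varphi_1(x))|^2}\,d(x,y)$$
for a universal $\kappa_0$. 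The two compactness facts that survive the presence of singularities and make this bound usable uniformly over all $x\in M\setminus{\rm Sing}(X)$ are that $\|\Phi_1\|$ and $\|\Phi_{-1}\|$ are finite, and that $|X(\varphi_1(x))|/|X(x)|=\|\Phi_1|_{\langle X(x)\rangle}\|$ lies between $1/\|\Phi_{-1}\|$ and $\|\Phi_1\|$. Combining these with the constraint $d(x,y)\le\beta|X(x)|$ built into the statement then yields $|t(y)-1|<\kappa\,d(x,y)$ with $\kappa$ independent of $x$.

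The step I expect to be the genuine obstacle is precisely securing these estimates uniformly near ${\rm Sing}(X)$. Away from the singular set everything above is a routine implicit-function-theorem computation, but as $x\to{\rm Sing}(X)$ both the cross sections and the flow boxes shrink and one loses the compactness of $M\setminus{\rm Sing}(X)$, so a naive argument would produce constants blowing up like $1/|X(x)|$. The remedy --- and the reason the statement is phrased with neighbourhoods of radius proportional to $|X(x)|$ --- is to carry the whole argument through in coordinates rescaled by $|X(x)|$, equivalently to work throughout with the scaled sectional Poincaré map $\mathcal{P}^*_{x,\varphi_1(x)}$, whose derivative is $\psi^*_1$: in these rescaled coordinates the size of the flow box, the transversality constant $|X(\varphi_1(x))|^2$, and the modulus of continuity of the derivative furnished by Lemma \ref{uni-con} are all uniform, and the return-time estimate of Lemma \ref{return} can then simply be read off.
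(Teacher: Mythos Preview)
The paper does not give its own proof of this lemma; it simply quotes the statement from \cite[Lemma 4.5]{WYZ}. So there is no in-paper argument to compare against, and your outline has to be judged on its own.

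Your strategy is the right one: write the hitting condition as a scalar equation $g(v,s)=0$, compute $\partial_s g(0,1)=|X(\varphi_1(x))|^2\neq 0$, and invoke the implicit function theorem; then make the estimates uniform by working in coordinates rescaled by the flow speed. The computation of $\partial_v t(0)$ is also correct. The gap is in the very last step. From
\[
|t(y)-1|\;\le\;\kappa_0\,\frac{\|\Phi_1\|^2\,|X(x)|}{|X(\varphi_1(x))|^2}\,d(x,y)
\]
and the two uniform facts you cite, you can only conclude that the prefactor is of order $1/|X(\varphi_1(x))|\sim 1/|X(x)|$, which blows up as $x\to{\rm Sing}(X)$. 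Combining with $d(x,y)\le\beta|X(x)|$ gives a \emph{uniform} bound $|t-1|\le C\beta$ (hence $t\in(0,2)$), but it does \emph{not} give $|t-1|<\kappa\,d(x,y)$ with $\kappa$ independent of $x$. A linear saddle already shows this is sharp: for $\dot x_1=x_1+x_2$, $\dot x_2=x_2$, the point $x=(\epsilon,0)$ and $y=(\epsilon,s)$ give $|t-1|\approx |s|/\epsilon=d(x,y)/|X(x)|$. So the estimate your argument actually produces is
\[
|t-1|\;<\;\kappa\,\frac{d(x,y)}{|X(x)|},
\]
and the passage from this to the unscaled Lipschitz bound stated in the lemma cannot be made by the reasoning you give. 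Either one must check that the intended inequality in \cite{WYZ} is the scaled one (which is what the rescaled-chart philosophy naturally yields and what is actually used downstream), or an additional argument specific to that reference is needed; in any case, your sentence ``Combining these \ldots\ then yields $|t(y)-1|<\kappa\,d(x,y)$ with $\kappa$ independent of $x$'' is the step that does not go through.
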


   We now review fundamental properties regarding various splittings for vector fields. Similar discussions can be found in \cite{LGW05,SGW14}.        
\begin{Definition}\label{DDS}
   Let $\Lambda$ be a compact invariant set of the vector field $X$, and let $T_\Lambda M=E\oplus F$ be an invariant splitting with respect to the tangent flow $\Phi_t$ over $\Lambda$. We call the splitting $T_\Lambda M=E\oplus F$ a \textbf{dominated splitting} with respect to the tangent flow $\Phi_t$, if there exist constants $C\geq 1$ and $\lambda>0$ such that for every $x\in\Lambda$ and $t\geq 0$, the following holds $$\left\lVert\Phi_t|_{E(x)}\right\rVert\cdot \left\lVert\Phi_{-t}|_{F\left(\varphi_t(x)\right)}\right\rVert\leq Ce^{-\lambda t}.$$
\end{Definition}

\begin{Definition}
   Let $\Lambda$ be a compact invariant set of the vector field $X$. Two subbundles $\mathcal{E}$ and $\mathcal{F}$ of $T_\Lambda M$ over $\Lambda$ are dominated with respect to the tangent flow $\Phi_t$, if there exist constants $C\geq 1$ and $\lambda>0$ such that for every $x\in\Lambda$ and $t\geq 0$, the following holds $$\left\lVert\Phi_t|_{\mathcal{E}(x)}\right\rVert\cdot \left\lVert\Phi_{-t}|_{\mathcal{F}\left(\varphi_t(x)\right)}\right\rVert\leq Ce^{-\lambda t}.$$ For simplicity, denote by $\mathcal{E}\prec\mathcal{F}$. 
\end{Definition}         

\begin{Definition}\label{FHS}
   Let $\Lambda$ be an invariant set of the vector field $X$, and let $T_\Lambda M=E\oplus\langle X\rangle\oplus F$ be an invariant splitting with respect to the tangent flow $\Phi_t$ over $\Lambda$. The splitting $T_\Lambda M=E\oplus\langle X\rangle\oplus F$ is called a \textbf{hyperbolic splitting} with respect to the tangent flow $\Phi_t$, if there exist constants $C\geq 1$ and $\lambda>0$ such that for every $x\in\Lambda$ and every $t\geq 0$, $$\left\lVert\Phi_t|_{E(x)}\right\rVert\leq Ce^{-\lambda t}\text{ and }\left\lVert\Phi_{-t}|_{F(x)}\right\rVert\leq Ce^{-\lambda t},$$ where $\langle X\rangle$ denotes the $1$-dimensional subspace spanned by the flow direction. Correspondingly, the set $\Lambda$ is called hyperbolic. 
\end{Definition} 

   Note that $\langle X\rangle$ is the trivial subspace $\{0\}$ at every singularity $x\in\text{Sing}(X)$, and is a $1$-dimensional subspace at every regular point $x\in M$. Since the constants $C$ and $\lambda$ in Definition \ref{FHS} are independent of $x\in\Lambda$, it follows that within every hyperbolic set, regular points cannot accumulate at singularities. Furthermore, one can show that the hyperbolic splitting $E(x)\oplus\langle X(x)\rangle\oplus F(x)$ varies continuously in $x$. Consequently, if $\Lambda$ is hyperbolic, then its closure $\overline{\Lambda}$ is also hyperbolic. 
\begin{Remark}
   Since the linear Poincar\'{e} flow $\psi_t$, the scaled linear Poincar\'{e} flow $\psi^*_t$, and the extended linear Poincar\'{e} flow $\widetilde{\psi}_t$ are all derived from the tangent flow $\Phi_t$, one can analogously define the notions of dominated and hyperbolic splittings for each of them. For instance, let $\Lambda$ be an {\rm (}not necessarily compact{\rm)} invariant set of flow $\varphi_t$. An invariant splitting $\mathcal{N}_\Lambda=E\oplus F$ over the invariant set $\Lambda$ is called a \textbf{dominated splitting} with respect to the linear Poincar\'{e} flow $\psi_t$, if there exist constants $C\geq 1$ and $\lambda>0$ such that for every $x\in\Lambda$ and every $t\geq 0$, the following holds $$\left\lVert\psi_t|_{E(x)} \right\rVert\cdot\left\lVert\psi_{-t}|_{F\left(\varphi_t(x)\right)}\right\rVert\leq Ce^{-\lambda t}.$$ An invariant splitting $\mathcal{N}_\Lambda=E\oplus F$ over the invariant set $\Lambda$ is called a \textbf{hyperbolic splitting} with respect to the linear Poincar\'{e} flow $\psi_t$, if there exist constants $C\geq 1$ and $\lambda>0$ such that for every $x\in\Lambda$ and every $t\geq 0$, the following holds $$\left\lVert\psi_t|_{E(x)}\right\rVert\leq Ce^{-\lambda t}\text{ and }\left\lVert\psi_{-t}|_{F(x)}\right\rVert\leq Ce^{-\lambda t}.$$ Furthermore, a splitting $\mathcal{N}_\Lambda=E\oplus F$ is a {\bf dominated {\rm (}resp. hyperbolic{\rm)} splitting} with respect to $\psi_t$ if and only if it is a {\bf dominated {\rm (}resp. hyperbolic{\rm)} splitting} with respect to $\psi^*_t$ or $\widetilde{\psi}_t$. It should be emphasized that this definition also applies to two subbundles of $\mathcal{N}$ over $\Lambda$.     
\end{Remark}

\subsection{The Lyapunov Exponents of Vector Fields}\label{LEF} 
   In this section, we introduce some ergodic theory for vector fields. Given a vector field $X\in\mathfrak{X}^r(M)(r\geq1)$, a measure $\mu$ is called \textbf{invariant} with respect to the flow $\varphi_t$ (a vector field $X$), if $\mu$ is an invariant measure of $\varphi_T$, for every $T\in\mathbb{R}$. Similarly, a measure $\mu$ is called \textbf{ergodic} with respect to the flow $\varphi_t$ (a vector field $X$), if $\mu$ is an ergodic invariant measure of $\varphi_T$, for every $T\in\mathbb{R}$. The set of all invariant measures and all ergodic measures of vector field are denoted by $\mathcal{M}(X)$, $\mathcal{E}(X)$ respectively. 
   
   Let $\mu$ be an invariant measure of flow $\varphi_t$. By the Oseledec Theorem \cite{OV}, for $\mu$-almost every $x\in M$, there exist a positive integer $k(x)\in[1,d]$, real numbers $\chi_1(x)<\chi_2(x)<\cdots<\chi_{k(x)}(x)$ and a measurable $\Phi_t$-invariant splitting $$T_xM=E_1(x)\oplus E_2(x)\oplus\cdots\oplus E_k(x)$$ such that $$\lim_{t\to\pm\infty}\dfrac{1}{t}\log\parallel\Phi_t(v) \parallel=\chi_i(x),~\forall~v\in E_i(x),~v\neq0,~i=1,2,\cdots,k(x).$$ The numbers $\lambda_1(x),\cdots,\lambda_{k(x)}(x)$ are called the {\bf Lyapunov exponents} at point $x$ of $\Phi_t$ with respect to $\mu$ and the vector formed by these numbers (counted with multiplicity, endowed with the increasing order) is called the \textbf{Lyapunov vector} at point $x$ of $\Phi_t$ with respect to $\mu$. The \textbf{index} of $\mu$, denoted by $\text{Ind}(\mu)$, is defined as $$\text{Ind}(\mu)\triangleq\sum_{\lambda_i(x)<0}\text{dim}E_i(x).$$ If $\mu$ is ergodic, then these numbers $k(x),\chi_1(x),\cdots,\chi_{k(x)}(x)$ are constants. In that case, we simply write $k,\chi_1,\cdots,\chi_k$. By the Poincar\'{e} Recurrence Theorem, for $\mu$-almost every $x\in M$, $$\lim_{t\to\pm\infty}\dfrac{1}{t}\log\parallel\Phi_t|_{\langle X(x)\rangle}\parallel=0,$$ where $\langle X(x)\rangle$ is the $1$-dimensional subspace of $T_xM$ spanned by the flow direction $X(x)$. Thus, it follows that there always exists a zero Lyapunov exponent for $\Phi_t$ along the flow direction. 
  
   For an ergodic invariant measure $\mu$ which is not supported on ${\rm Sing}(X)$, according to the definition of the linear Poincar\'{e} flow $\psi_t:\mathcal{N}\rightarrow\mathcal{N}$ and the Oseledec Theorem \cite{OV}, for $\mu$-almost every $x\in M\setminus{\rm Sing}(X)$, there exist a positive integer $k\in[1,d-1]$, real numbers $\chi_1<\chi_2<\cdots<\chi_k$ and a measurable $\psi_t$-invariant splitting (for simplicity, we omit the base point) $$\mathcal{N}=E_1\oplus E_2\oplus\cdots\oplus E_k$$ on normal bundle such that $$\lim_{t\to\pm\infty}\dfrac{1}{t}\log\parallel\psi_t(v) \parallel=\chi_i,~\forall~v\in E_i,~v\neq 0,~i=1,2,\cdots,k.$$ Accordingly, the numbers $\chi_1,\cdots,\chi_k$ are called the \textbf{Lyapunov exponents} of $\psi_t$ with respect to the ergodic measure $\mu$ and the vector formed by these numbers (counted with multiplicity, endowed with the increasing order) is called the \textbf{Lyapunov vector} of $\psi_t$ with respect to $\mu$. Recall that the scaled linear Poincar\'{e} flow $\psi_t^*:\mathcal{N}\rightarrow\mathcal{N}$ is defined as $$\psi^*_t(v)=\frac{\left\lVert X(x)\right\rVert}{\left\lVert X(\varphi_t(x))\right\rVert}\psi_t(v)=\frac{\psi_t(v)}{\left\lVert\Phi_t|_{\langle X(x)\rangle}\right\rVert},$$ where $\langle X(x)\rangle$ is the $1$-dimensional subspace of $T_xM$ spanned by the flow direction $X(x)$. Since the Lyapunov exponent of $\Phi_t$ along the flow direction is zero, i.e.,  $$\lim_{t\to\pm\infty}\dfrac{1}{t}\log\left\lVert\Phi_t|_{\langle X(x)\rangle}\right\rVert=0,$$ we obtain that $$\lim_{t\to\pm\infty}\dfrac{1}{t}\log\left\lVert\psi^*_t(v)\right\rVert=\lim_{t\to\pm\infty}\dfrac{1}{t}\left(\log\left\lVert\psi_t(v)\right\rVert-\log\left\lVert\Phi_t|_{\langle X(x)\rangle}\right\rVert\right)=\lim_{t\to\pm\infty}\dfrac{1}{t}\log\left\lVert\psi_t(v)\right\rVert.$$ Thus, for $\mu$-almost every $x\in M\setminus{\rm Sing}(X)$, there also exist a same positive integer $k\in[1,d-1]$, the same real numbers $\chi_1<\chi_2<\cdots<\chi_k$ and a measurable $\psi^*_t$-invariant splitting (for simplicity, we omit the base point) $$\mathcal{N}=E_1\oplus E_2\oplus\cdots\oplus E_k$$ on normal bundle such that $$\lim_{t\to\pm\infty}\dfrac{1}{t}\log\parallel\psi^*_t(v) \parallel=\chi_i,~\forall~v\in E_i,~v\neq 0,~i=1,2,\cdots,k.$$ The numbers $\chi_1,\cdots,\chi_k$ are the \textbf{Lyapunov exponents} of $\psi^*_t$ with respect to $\mu$ and the vector formed by these numbers (counted with multiplicity, endowed with the increasing order) is the \textbf{Lyapunov vector} of $\psi^*_t$ with respect to $\mu$. It means that the Lyapunov exponents of the scaled linear Poincar\'{e} flow and those of the linear Poincar\'{e} flow are the same. Hence, the scaled linear Poincar\'{e} flow and the linear Poincar\'{e} flow also have the same Oseledets splitting. Ignoring multiplicities, we also denote the Lyapunov exponents of (scaled) linear Poincar\'{e} flow with respect to $\mu$ by $\lambda_1\leq\lambda_2\leq\cdots\leq\lambda_{d-1}$, where $$\lambda_j=\chi_i,\text{ whenever $d_1+d_2+\cdots+d_{i-1}<j\leq d_1+d_2+\cdots+d_i$}.$$ 
   
   Given a periodic point $z$ of the vector field $X$, denote its period by $\pi(z)$. The Borel probability measure $$\mu_z=\dfrac{1}{\pi(z)}\int_{0}^{\pi(z)}\delta_{\varphi_t(z)}dt$$ is an invariant measure supported on the periodic orbit of $z$, where $\delta_y$ is the Dirac measure at $y$. By the Oseledec Theorem, the (scaled) linear Poincar\'{e} flow also has well-defined Lyapunov exponents and a Lyapunov vector with respect to the periodic invariant measure $\mu_z$.          
\begin{Definition}\label{Def:hyperbolicmeasure}
   An ergodic measure $\mu$ of the flow $\varphi_t$ is \emph{regular} if it is not supported on a singularity. A regular ergodic measure is \emph{hyperbolic}, if the Lyapunov exponents of the \emph{linear Poincar\'{e} flow} $\psi_t$ are all nonzero.
\end{Definition}
     
\begin{Remark}
   We can also define the hyperbolicity of an ergodic measure by using the tangent flow $\Phi_t={\rm d}\varphi_t$ as usual. However, for ergodic measures which are not supported on singularities, there will be a zero Lyapunov exponent of the tangent flow $\Phi_t$ along the flow direction. 
\end{Remark}

   Given an ergodic hyperbolic invariant measure $\mu$, denote by $\Gamma$ (called the Oseledec's basin of $\mu$ in \cite{WCZ21}) be the set of all points that are regular with respect to linear Poincar\'{e} $\psi_t$ in the sense of Oseledec \cite{OV}. For every $x\in\Gamma$, let $$\mathcal{N}_x=E_1(x)\oplus E_2(x)\oplus\cdots\oplus E_s(x)\oplus E_{s+1}(x)\oplus\cdots\oplus E_k(x)~~(k\in[0,d-1]\text{ is an integer})$$ be the Oseledec splitting corresponding to the distinct Lyapunov exponents $$\chi_1(\mu)<\chi_2(\mu)<\cdots<\chi_s(\mu)<0<\chi_{s+1}(\mu)<\cdots<\chi_k(\mu)$$ with multiplicities $d_1,d_2,\cdots,d_k\geq 1$. Hence $\text{dim}(E_i(x))=d_i$, for $i=1,2,\cdots,k$. Define the stable bundle and the unstable bundle respectively by $$E^s=E_1\oplus E_2\oplus\cdots\oplus E_s,~E^u=E_{s+1}\oplus E_{s+2}\oplus\cdots\oplus E_k.$$
\begin{Definition}\label{OSDS}
   Given an ergodic hyperbolic invariant measure $\mu$, the Oseledec splitting $$\mathcal{N}_\Gamma=E_1\oplus E_2\oplus\cdots\oplus E_s\oplus E_{s+1}\oplus\cdots\oplus E_k$$ of the linear Poincar\'{e} $\psi_t$ with respect to the ergodic hyperbolic invariant measure $\mu$ is called a \textbf{dominated splitting}, if there are constants $C\geq 1$ and $\lambda>0$ such that for every $x\in\Gamma$ and $t\geq 0$, the two subbundles $E_i$ and $E_{i+1}$ are dominated with respect to the linear Poincar\'{e} $\psi_t$ for each $i=1,\cdots,k-1$. 
\end{Definition}

   Now, we introduce some symbols and basic facts regarding the vector field $-X$. Given a vector field $X\in\mathfrak{X}^1(M)$, let $-X$ denote the vector field such that $-X(x)$ and $X(x)$ are opposite in directions and $\lvert-X(x)\rvert=\lvert X(x)\rvert$, for every $x\in M$. Consequently, the vector field $-X$ also generates a $C^1$ flow,  denoted by $\overline{\varphi}_t$. Correspondingly, we obtain the tangent flow $\overline{\Phi}_t$, the linear Poincar\'{e} flow $\overline{\psi}_t$, the extended linear Poincar\'{e} flow $\overline{\widetilde{\psi}}_t$ and the scaled linear Poincar\'{e} flow $\overline{\psi}^*_t$.

   According to the relationship between $\overline{\varphi}_t$ and $\varphi_t$, each ergodic invariant measure $\mu$ of $\varphi_t$ remains ergodic and invariant for $\overline{\varphi}_t$. Hence, by the Oseledec Theorem \cite{OV}, for $\mu$-almost every $x\in M$, there exist an integer $k\in[1,d]$, distinct real numbers $\overline{\chi}_1<\cdots<\overline{\chi}_{k}$ and a measurable $\overline{\Phi}_t$-invariant splitting $$T_xM=\overline{E}_1(x)\oplus\overline{E}_2(x)\oplus\cdots\oplus \overline{E}_k(x)$$ such that $$\lim_{t\to\pm\infty}\dfrac{1}{ t}\log\left\lVert\overline{\Phi}_t(v)\right\rVert=\overline{\chi}_i,~\forall~v\in\overline{E}_i(x),~v\neq 0,~i=1,2,\cdots,k.$$ These numbers $\overline{\chi}_1,\dots,\overline{\chi}_k$ are the \textbf{Lyapunov exponents} of $\overline{\Phi}_t$ at $x$ with respect to $\mu$. Their multiplicities are $$\overline{d}_i=\text{dim}(\overline{E}_i(x)),\text{ for $i=1,\cdots,k.$}$$ The \textbf{Lyapunov vector} is obtained by listing all exponents in increasing order, each repeated according to its multiplicity. By the Poincar\'{e} Recurrence Theorem, for $\mu$-almost every $x\in M$, the Lyapunov exponent of $\overline{\Phi}_t$ along the flow direction is zero. Namely,  $$\lim_{t\to\pm\infty}\dfrac{1}{t}\log\left\lVert\overline{\Phi}_t|_{\langle -X(x)\rangle}\right\rVert=0.$$ Taking multiplicities into account, we list all Lyapunov exponents of $\overline{\Phi}_t$ with respect to $\mu$ in non-decreasing order as $\overline{\lambda}_1 \leq\overline{\lambda}_2\leq\cdots\leq\overline{\lambda}_d$. That is, $$\overline{\lambda}_j=\overline{\chi}_i,\quad\text{for } \overline{d}_1+\overline{d}_2+\cdots+\overline{d}_{i-1}<j\leq \overline{d}_1+\overline{d}_2+\cdots+\overline{d}_i.$$ 

   For an ergodic invariant measure $\mu$ which is not supported on $\text{Sing}(X)$, the Oseledec Theorem \cite{OV} applied to the linear Poincar\'{e} flow $\overline{\psi}_t:\mathcal{N}\to\mathcal{N}$ implies that for $\mu$-almost every $x\in M\setminus\text{Sing}(X)$, there exist an integer $k\in[1,d-1]$, distinct real numbers $\overline{\chi}_1<\overline{\chi}_2<\cdots<\overline{\chi}_k$ and a measurable $\overline{\psi}_t$-invariant splitting (for simplicity, we omit the base point) $$\mathcal{N}=\overline{E}_1\oplus \overline{E}_2\oplus\cdots\oplus\overline{E}_k$$ on normal bundle such that $$\lim_{t\to\pm\infty}\dfrac{1}{t}\log\left\lVert\overline{\psi}_t(v)\right\rVert=\overline{\chi}_i,~\forall~v\in\overline{E}_i,~v\neq 0,~i=1,2,\cdots,k.$$ These numbers $\overline{\chi}_1<\overline{\chi}_2<\cdots<\overline{\chi}_k$ are the \textbf{Lyapunov exponents} of $\overline{\psi}_t$ with respect to $\mu$. 
      
   Recall the definition of the scaled linear Poincar\'{e} flow $$\overline{\psi}^*_t(v)=\frac{\left\lVert-X(x)\right\rVert}{\left\lVert-X(\overline{\varphi}_t(x))\right\rVert}\overline{\psi}_t(v)=\frac{\overline{\psi}_t(v)}{\left\lVert\overline{\Phi}_t|_{\langle-X(x)\rangle}\right\rVert}.$$ Since the Lyapunov exponent of $\overline{\Phi}_t$ along the flow direction vanishes,  $$\lim_{t\to\pm\infty}\dfrac{1}{t}\log\left\lVert\overline{\Phi}_t|_{\langle-X(x)\rangle}\right\rVert=0,$$ it follows that $$\lim_{t\to\pm\infty}\dfrac{1}{t}\log\left\lVert\overline{\psi}^*_t(v)\right\rVert=\lim_{t\to\pm\infty}\dfrac{1}{t}\left(\log\left\lVert\overline{\psi}_t(v)\right\rVert-\log\left\lVert\overline{\Phi}_t|_{\langle-X(x)\rangle}\right\rVert\right)=\lim_{t\to\pm\infty}\dfrac{1}{t}\log\left\lVert\overline{\psi}_t(v)\right\rVert.$$ Consequently, for $\mu$-almost every $x\in M\setminus\text{Sing}(X)$, the Oseledec splitting and the Lyapunov exponents for $\overline{\psi}^*_t$ are identical to those for $\overline{\psi}_t$. In particular, the numbers $\overline{\chi}_1,\cdots,\overline{\chi}_k$ are also the Lyapunov exponents of $\overline{\psi}^*_t$ with respect to $\mu$. Hence, the scaled linear Poincar\'{e} flow and the linear Poincar\'{e} flow share the same Lyapunov spectrum and Oseledec splitting.
   
   The identity $$\overline{\varphi}_{-t}=\varphi_t,\text{ for all }t \in\mathbb{R}$$ is a direct consequence of the relationship between the vector fields $-X$ and $X$. This implies for every vector $v\in T_xM$, $$\lim_{t\to\pm\infty}\dfrac{1}{ t}\log\left\lVert\overline{\Phi}_t(v)\right\rVert=\lim_{t\to\pm\infty}\dfrac{1}{t}\log\left\lVert\Phi_{-t}(v)\right\rVert=-\lim_{t\to\pm\infty}\dfrac{1}{-t}\log\left\lVert\Phi_{-t}(v)\right\rVert.$$ Consequently, for every vector $v\in\mathcal{N}$, $$\lim_{t\to\pm\infty}\dfrac{1}{ t}\log\left\lVert\overline{\psi}_t(v)\right\rVert=\lim_{t\to\pm\infty}\dfrac{1}{t}\log\left\lVert\psi_{-t}(v)\right\rVert=-\lim_{t\to\pm\infty}\dfrac{1}{-t}\log\left\lVert\psi_{-t}(v)\right\rVert$$ and $$\lim_{t\to\pm\infty}\dfrac{1}{ t}\log\left\lVert\overline{\psi}^*_t(v)\right\rVert=\lim_{t\to\pm\infty}\dfrac{1}{t}\log\left\lVert\psi^*_{-t}(v)\right\rVert=-\lim_{t\to\pm\infty}\dfrac{1}{-t}\log\left\lVert\psi^*_{-t}(v)\right\rVert.$$ Thus, if $\chi_1<\cdots<\chi_k$ are the Lyapunov exponents of $\psi_t$ (or $\psi^*_t$) with respect to $\mu$, then the Lyapunov exponents $\overline{\chi}_i$ of $\overline{\psi}_t$ (or $\overline{\psi}^*_t$) satisfy the reversed-order relation $$\overline{\chi}_i=-\chi_{k-i+1},\text{ for every }i=1,2,\cdots,k.$$ Moreover, the corresponding Oseledec subspaces and their dimensions are reversed accordingly:   $$\overline{E}_i=E_{k-i+1},~\overline{d}_i=\text{dim}\overline{E}_i=d_{k-i+1},~\text{ for }i=1,2,\cdots,k.$$ When the Lyapunov exponents are listed with multiplicity in non-decreasing order as $$\lambda_1\le\cdots\le\lambda_{d-1}\text{ and }\overline{\lambda}_1\le\cdots\le\overline{\lambda}_{d-1},$$ we obtain $$\overline{\lambda}_i=-\lambda_{d-i},\text{ for }i=1,2,\cdots,d-1.$$ Hence, every ergodic hyperbolic invariant measure of $\varphi_t$ is also an ergodic hyperbolic invariant measure of $\overline{\varphi}_t$. 
  
\section{Lyapunov Norms and Shadowing Lemma for Vector Fields}\label{LP}

\subsection{Lyapunov Norms and Lyapunov Charts}
   In this section, we only consider $C^1$ vector field $X\in\mathfrak{X}^1(M)$. Since the linear Poincar\'{e} flow (see Definition \ref{Def:linearPoincare}) is not defined at singularities, some uniformity is lost. However, by considering the scaled linear Poincar\'{e} flow, one can recover certain uniform properties. 
   
   On the tangent bundle $TM$, the \emph{Sasaki metric} (see \cite[ Subsection 2.1]{BMW12}), denoted by $d_{S}$, is induced by the Riemannian metric on $M$ and proves very useful. By the compactness of $M$, there exists a small constant $\rho_S$ such that for any two points $x,~y\in M$ with $d(x,y)<\rho_S$, there exists a unique geodesic joining $y$ to $x$. Moreover, if $u_x\in T_xM$ and $u_y\in T_yM$ satisfy $d_{S}(u_x,u_y)\leq\rho_S$ (equivalently, if $|u_x-u'_y|\leq\rho_S$, where $u'_y\in T_xM$ is the parallel transport of $u_y$ along this geodesic from $y$ to $x$), then $$\dfrac{\left(d(x,y)+\lvert u_x-u'_y\rvert\right)}{2}\leq d_{S}(u_x,u_y)\leq 2\left(d(x,y)+\lvert u_x-u'_y\rvert\right).$$ Furthermore, there exists a constant $K_{G^1}$ such that for any two unit vectors $u_1,~u_2\in TM$, $$d_{G^1}(\langle u_1\rangle,\langle u_2\rangle)\leq K_{G^1}\cdot d_{S}(u_1,u_2),$$ where $d_{G^1}$ is the Grassman distance on the $1$-dimensional Grassmann bundle over $M$, i.e., $$G^1=\left\{N\subset T_xM: N \text{ is a $1$-dimensional linear subspace of $T_xM$}, x\in M\right\}.$$ Since $M$ is compact, there exists a constant $K_0$ such that $$\max_{x\in M}\left\{\lvert X(x)\rvert,~\lVert DX(x)\rVert\right\}\leq K_0.$$ The following lemma establishes the relationship between the Sasaki metric and the Riemannian metric.    
\begin{Lemma}\label{Dlgx}{\rm\cite[Lemma 3.1]{LLL24}}
   There exist constants $K_1>K_0$ and $\beta^*_0>0$ such that for every $x\in M\setminus\text{Sing}(X)$ and every $y\in B\left(x,\beta^*_0\left\lvert X(x)\right\rvert\right)$, the following estimates hold:
\begin{itemize}
   \item[{\rm (1)}] $1-K_1\cdot \dfrac{d(x,y)}{\left\lvert X(x)\right\rvert}\leq\dfrac{\left\lvert X(x)\right\rvert}{\left\lvert X(y)\right\rvert}\leq1+K_1\cdot\dfrac{d(x,y)}{\left\lvert X(x)\right\rvert}${\rm;}
	
   \item [{\rm (2)}] $d_S\left(\dfrac{X(x)}{\left\lvert X(x)\right\rvert},\dfrac{X(y)}{\left\lvert X(y)\right\rvert}\right)\leq K_1\cdot\dfrac{d(x,y)}{\left\lvert X(x)\right\rvert}${\rm;}
	
   \item [{\rm (3)}] $d_{G^1}\left(\langle X(x)\rangle,\langle X(y)\rangle\right)\leq K_1\cdot\dfrac{d(x,y)}{\left\lvert X(x)\right\rvert}${\rm.}
\end{itemize}   	
\end{Lemma}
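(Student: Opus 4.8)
The plan is to reduce all three estimates to one uniform Lipschitz-type bound for $X$ coming from $C^1$ regularity and the compactness of $M$, and then to carry the scaling by $\lvert X(x)\rvert$ through. Fix first the constant $\rho_S$ appearing in the Sasaki-metric discussion above. For $x\in M\setminus{\rm Sing}(X)$ and $y$ with $d(x,y)<\rho_S$, let $\gamma$ be the unique minimizing geodesic from $x$ to $y$ and $P_{yx}\colon T_yM\to T_xM$ the parallel transport along $\gamma$, which is a linear isometry. Since $X$ is $C^1$ and $M$ is compact, the covariant derivative of $t\mapsto X(\gamma(t))$ is bounded along $\gamma$ by a uniform constant (comparable to $K_0$, after absorbing the Christoffel/curvature contribution), so integrating the identity $\frac{d}{dt}\bigl(P_{\gamma(t)x}X(\gamma(t))\bigr)=P_{\gamma(t)x}\frac{D}{dt}X(\gamma(t))$ over $\gamma$ yields a uniform constant $\widehat{K}>0$ with
\[
\bigl\lvert P_{yx}X(y)-X(x)\bigr\rvert\le\widehat{K}\,d(x,y),
\qquad\text{hence}\qquad
\bigl\lvert\,\lvert X(y)\rvert-\lvert X(x)\rvert\,\bigr\rvert\le\widehat{K}\,d(x,y),
\]
the second inequality because $P_{yx}$ preserves norms.

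Next I would fix $\beta^*_0>0$ small enough that $\beta^*_0\widehat{K}\le\tfrac12$, that $2\beta^*_0\widehat{K}\le\rho_S$, and that $\beta^*_0K_0<\rho_S$: the last two make the geodesic $\gamma$ and the Sasaki equivalence available for all $y\in B\bigl(x,\beta^*_0\lvert X(x)\rvert\bigr)$, and the first gives $\lvert X(y)\rvert\ge(1-\beta^*_0\widehat{K})\lvert X(x)\rvert\ge\tfrac12\lvert X(x)\rvert>0$. Statement (1) is then immediate:
\[
\Bigl\lvert\frac{\lvert X(x)\rvert}{\lvert X(y)\rvert}-1\Bigr\rvert
=\frac{\bigl\lvert\,\lvert X(x)\rvert-\lvert X(y)\rvert\,\bigr\rvert}{\lvert X(y)\rvert}
\le\frac{\widehat{K}\,d(x,y)}{\tfrac12\lvert X(x)\rvert}
=\frac{2\widehat{K}\,d(x,y)}{\lvert X(x)\rvert}.
\]

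For (2), set $u_x=X(x)/\lvert X(x)\rvert$, $u_y=X(y)/\lvert X(y)\rvert$, and $u'_y=P_{yx}u_y\in T_xM$. Writing
\[
u_x-u'_y=\frac{X(x)-P_{yx}X(y)}{\lvert X(x)\rvert}+P_{yx}X(y)\Bigl(\frac{1}{\lvert X(x)\rvert}-\frac{1}{\lvert X(y)\rvert}\Bigr),
\]
the first summand has norm at most $\widehat{K}\,d(x,y)/\lvert X(x)\rvert$ and the second has norm $\bigl\lvert\,\lvert X(y)\rvert-\lvert X(x)\rvert\,\bigr\rvert/\lvert X(x)\rvert\le\widehat{K}\,d(x,y)/\lvert X(x)\rvert$, so $\lvert u_x-u'_y\rvert\le 2\widehat{K}\,d(x,y)/\lvert X(x)\rvert\le\rho_S$. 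The Sasaki--Riemannian equivalence recalled before the lemma, together with the trivial bound $d(x,y)\le K_0\,d(x,y)/\lvert X(x)\rvert$ (since $\lvert X(x)\rvert\le K_0$), then gives
\[
d_S(u_x,u_y)\le 2\bigl(d(x,y)+\lvert u_x-u'_y\rvert\bigr)\le 2\bigl(K_0+2\widehat{K}\bigr)\frac{d(x,y)}{\lvert X(x)\rvert}.
\]
Finally, (3) follows at once by applying the inequality $d_{G^1}(\langle u_1\rangle,\langle u_2\rangle)\le K_{G^1}\,d_S(u_1,u_2)$ with $u_1=u_x$, $u_2=u_y$. Taking $K_1=\max\bigl\{K_0,\,2\widehat{K},\,2(K_0+2\widehat{K}),\,2(K_0+2\widehat{K})K_{G^1}\bigr\}+1$ makes all three bounds hold simultaneously and ensures $K_1>K_0$, while $\beta^*_0$ stays as chosen.

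The only genuinely delicate point is the first step: one must obtain the bound $\lvert P_{yx}X(y)-X(x)\rvert\le\widehat{K}\,d(x,y)$ honestly, i.e. by differentiating $X$ covariantly along $\gamma$ (not naively in coordinates) and invoking compactness of $M$ to make the bound on $\nabla X$ uniform. Everything after that is bookkeeping of constants together with successive shrinkings of $\beta^*_0$, and these constraints are mutually compatible.
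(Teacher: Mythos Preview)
Your proof is correct. The paper does not supply its own proof of this lemma but merely cites \cite{LLL24}, so there is nothing in the text to compare against; your argument---a uniform covariant Lipschitz bound $\lvert P_{yx}X(y)-X(x)\rvert\le\widehat{K}\,d(x,y)$ from $C^1$ regularity and compactness, followed by elementary scaling and the Sasaki/Grassmann comparisons already recalled before the lemma---is exactly the natural route.
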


   Let $r_0$ be the radius such that the exponential map $$\exp:~TM\mapsto M$$ is a $C^{\infty}$ diffeomorphism on $T_xM(r_0)$. By reducing $\beta^*_0>0$ if necessary, we may assume that $$10\beta^*_0K_1<\min\{r_0,1\}.$$ Thus, Item $(1)$ of Lemma \ref{Dlgx} implies that there are no singularities in $B(x,\beta^*_0\lvert X(x)\rvert)$. To regain uniformity, we next analyze the sectional Poincar\'{e} flow using local charts defined on scaled neighborhoods. These charts, known as Liao scaled charts, were introduced in \cite{LST79A,LST85,Liao89}. Similar discussions can be found in \cite{CY2017,GY,WW19}.
   
   Let $\left\{e_1,\cdots,e_d\right\}$ be an orthonormal basis of $\mathbb{R}^d$. For every $x\in M$, by taking an orthonormal basis $\left\{e^x_1,\cdots,e^x_d\right\}$ of $T_xM$, we can define a linear isometry $C_x:\mathbb{R}^d\to T_xM$, which serves as a change of coordinates from the Euclidean space to the tangent space, such that $$C_x(e_i)=e^x_i,\text{ for $i=1,2,\cdots,d$}.$$ Then, $$\langle C_x(u), C_x(v)\rangle_x=\langle u,v\rangle,~\forall~u,~v\in\mathbb{R}^d,$$ where $\langle\cdot,\cdot\rangle$ is the standard scalar product on $\mathbb{R}^d$. Define $${\rm Exp}_x=\exp_x\circ C_x:~\mathbb{R}^d\to M$$ and denote $$\mathbb{R}^d(r)=\left\{v\in\mathbb{R}^d:~\lvert v\rvert\leq r\right\}.$$ Then, for every $x\in M$, ${\rm Exp}_x|_{\mathbb{R}^d(r_0)}$ is a $C^{\infty}$ diffeomorphism from $\mathbb{R}^d(r_0)$ to $B(x,r_0)$. This allows us to lift the flow locally from the neighborhood $B(x,r_0)$ to $\mathbb{R}^d(r_0)$ via ${\rm Exp}_x$. That is, if there exist $y\in\mathbb{R}^d$ and $t_1<0<t_2$ such that $$\varphi_t({\rm Exp}_x(y))\in B(x,r_0),~\text{ for each $t\in[t_1,t_2]$},$$ then we define $$\widetilde{\varphi_{x,t}}(y)=\left({\rm Exp}_x|_{\mathbb{R}^d(r_0)}\right)^{-1}\circ\varphi_t\circ{\rm Exp}_x(y),~\text{ for each $t\in[t_1,t_2]$}.$$ In this Liao scaled chart at $x$, the flow $\varphi_t$ generated by the vector field $X\in\mathfrak{X}^1(M)$ satisfies the differential equation $$\dfrac{dz}{dt}=\widehat{X}_x(z),$$ where $\widehat{X}_x(z)=D\left({\rm Exp}_x|_{\mathbb{R}^d(r_0)}\right)^{-1}\circ X\circ{\rm Exp}_x(z)$ and ${\rm Exp}_x(0)=x$, $\left\lvert\widehat{X}_x(0)\right\rvert=\left\lvert X(x)\right\rvert$. By the compactness of $M$, there exists a constant $K_{{\rm E}}>1$ such that $$\max_{x\in M}\left\{\left\lVert D{\rm Exp}_x|_{\mathbb{R}^d(r_0)}\right\rVert,~\left\lVert D\left({\rm Exp}_x|_{\mathbb{R}^d(r_0)}\right)^{-1}\right\rVert\right\}<K_{{\rm E}}.$$ Hence, increasing $K_0$ if necessary, we may further assume that $$\max_{x\in M}\left\{\max_{p\in\mathbb{R}^d(r_0)}\left\{\left\lvert\widehat{X}_x(p)\right\rvert,~\left\lVert D\widehat{X}_x(p)\right\rVert\right\}\right\}<K_0.$$ Therefore, $$\left\lvert\widehat{X}_x(p)-\widehat{X}_x(0)\right\rvert<K_0\lvert p\rvert\leq K_0r_0.$$
   
   For every regular point $x\in M$, take $$e^x_1=\dfrac{X(x)}{\left\lvert X(x)\right\rvert}.$$ Then, $\widehat{X}_x(0)=\left(\lvert X(x)\rvert,0,0,\cdots,0\right)$. Given $x\in M\setminus\text{Sing}(X)$, for every $y\in B(x,\beta^*_0\left\lvert X(x)\right\rvert)$, we choose an orthonormal basis $\left\{e^y_1,\cdots,e^y_d\right\}$ of $T_yM$ such that $e^y_1=\dfrac{X(y)}{\lvert X(y)\rvert}$ and $$d_S\left(e^{y_1}_j,e^{y_2}_j\right)<2K_1\dfrac{d(y_1,y_2)}{\lvert X(x)\rvert},\text{ for any two points $y_1,~y_2\in B(x,\beta^*_0\lvert X(x)\rvert)$, }j=1,2,\cdots,d.$$ Using these orthonormal bases on $T_{B(x,\beta^*_0|X(x)|)}M$, we obtain a uniform $C^1$ estimate for the diffeomorphism ${\rm Exp}_x$, as stated in the following Lemma \ref{Eugx}.    
\begin{Lemma}\label{Eugx}{\rm\cite[Lemma 3.2]{LLL24}}
   There exists $K_2>1$ such that for every $x\in M\setminus\text{Sing}(X)$ and any two points $y_1,~y_2\in B(x,\beta^*_0\lvert X(x)\rvert)$, the following estimate holds: $$\left\lVert\left({\rm Exp}^{-1}_{y_1}\circ{\rm Exp}_{y_2}-id_{\mathbb{R}^d}\right)|_{\mathbb{R}^d(r_0/2)}\right\rVert_{C^1}\leq K_2\dfrac{d(y_1,y_2)}{\lvert X(x)\rvert},$$ where $\lVert\cdot\rVert_{C^1}$ is the $C^1$ norm.	
\end{Lemma}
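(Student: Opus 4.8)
The plan is to write the chart transition as
$\mathrm{Exp}_{y_1}^{-1}\circ\mathrm{Exp}_{y_2}=C_{y_1}^{-1}\circ\big(\exp_{y_1}^{-1}\circ\exp_{y_2}\big)\circ C_{y_2}$
and to control its two ingredients separately: the purely Riemannian factor $\exp_{y_1}^{-1}\circ\exp_{y_2}$, which on $M$ is $C^1$-close to a parallel transport with a Lipschitz constant in the base point depending only on $M$; and the frame change $C_{y_2}\mapsto C_{y_1}$, whose size is measured by the Sasaki distances $d_S(e^{y_1}_j,e^{y_2}_j)$ and hence — by Lemma \ref{Dlgx}(2) for $j=1$ and by the construction of the bases for $j\geq 2$ — by $d(y_1,y_2)/\lvert X(x)\rvert$. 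First I would check that the composition is well defined on $\mathbb{R}^d(r_0/2)$: from $y_1,y_2\in B(x,\beta^*_0\lvert X(x)\rvert)$, $10\beta^*_0K_1<r_0$ and $K_1>K_0\geq\lvert X(x)\rvert$ one gets $d(y_1,y_2)<r_0/5$, so $\mathrm{Exp}_{y_2}\big(\mathbb{R}^d(r_0/2)\big)\subset B(y_2,r_0/2)\subset B(y_1,r_0)$. It then suffices to bound the operator norm $\lVert D_p\big(\mathrm{Exp}_{y_1}^{-1}\circ\mathrm{Exp}_{y_2}\big)-\mathrm{id}\rVert$ for $p\in\mathbb{R}^d(r_0/2)$, because the $C^0$ part follows by integrating from $0$ to $p$: at $p=0$ the value is $\mathrm{Exp}_{y_1}^{-1}(y_2)$, of norm exactly $d(y_1,y_2)\leq K_0\,d(y_1,y_2)/\lvert X(x)\rvert$ since $\lvert X(x)\rvert\leq K_0$.

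For the derivative, write $P\colon T_{y_2}M\to T_{y_1}M$ for parallel transport along the minimizing geodesic and put $v=C_{y_2}(p)$; I would split
$$D_p\big(\mathrm{Exp}_{y_1}^{-1}\circ\mathrm{Exp}_{y_2}\big)-\mathrm{id}=C_{y_1}^{-1}\circ\big(D_{\exp_{y_2}(v)}\exp_{y_1}^{-1}\circ D_{v}\exp_{y_2}-P\big)\circ C_{y_2}+\big(C_{y_1}^{-1}\circ P\circ C_{y_2}-\mathrm{id}_{\mathbb{R}^d}\big).$$
For the first summand, the assignment $(y_1,y_2,v)\mapsto P_{y_1\to y_2}\circ D_{\exp_{y_2}(v)}\exp_{y_1}^{-1}\circ D_{v}\exp_{y_2}$ is uniformly $C^1$ on the compact region $\{d(y_1,y_2)\leq r_0/2,\ \lvert v\rvert\leq r_0/2\}$ and equals the identity of $T_{y_2}M$ whenever $y_1=y_2$; the mean value inequality together with compactness of $M$ gives that the bracket has norm $\leq K'd(y_1,y_2)$ for some $K'=K'(M)$, and since $C_{y_1}^{-1},C_{y_2}$ are linear isometries and $d(y_1,y_2)\leq K_0\,d(y_1,y_2)/\lvert X(x)\rvert$, this summand is $\leq K'K_0\,d(y_1,y_2)/\lvert X(x)\rvert$.

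The crux is the second summand. The map $A=C_{y_1}^{-1}\circ P\circ C_{y_2}$ is orthogonal, and because $\mathrm{id}_{\mathbb{R}^d}(e_j)=C_{y_1}^{-1}(e^{y_1}_j)$ while $A(e_j)=C_{y_1}^{-1}\big(P(e^{y_2}_j)\big)$ and $C_{y_1}^{-1}$ is an isometry, one has $\lVert A-\mathrm{id}_{\mathbb{R}^d}\rVert\leq\sqrt d\,\max_{1\leq j\leq d}\big\lvert P(e^{y_2}_j)-e^{y_1}_j\big\rvert$. After possibly shrinking $\beta^*_0$ so that $4K_1\beta^*_0\leq\rho_S$, the Sasaki comparison recalled before Lemma \ref{Dlgx} yields $\lvert P(e^{y_2}_j)-e^{y_1}_j\rvert\leq 2\,d_S(e^{y_1}_j,e^{y_2}_j)$, and the defining property of the chosen orthonormal bases, $d_S(e^{y_1}_j,e^{y_2}_j)<2K_1\,d(y_1,y_2)/\lvert X(x)\rvert$, gives $\lVert A-\mathrm{id}_{\mathbb{R}^d}\rVert\leq 4\sqrt d\,K_1\,d(y_1,y_2)/\lvert X(x)\rvert$. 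Collecting the two summands and the $C^0$ bound proves the lemma with $K_2$ a fixed multiple of $K'K_0+\sqrt d\,K_1$, depending only on $M$ and $X$. I expect the only genuine obstacle to be obtaining the scaling factor $1/\lvert X(x)\rvert$ in this last estimate: near a singularity the direction field $X(\cdot)/\lvert X(\cdot)\rvert$, which is the first vector of every chosen frame, may rotate arbitrarily fast relative to Riemannian distance, and it is exactly the normalization built into the Liao scaled charts, made quantitative by Lemma \ref{Dlgx}(2), that keeps this rotation proportional to $d(y_1,y_2)/\lvert X(x)\rvert$.
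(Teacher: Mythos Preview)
The paper does not supply a proof of this lemma; it is quoted from \cite[Lemma~3.2]{LLL24}, so there is no argument in the paper to compare against. Your proof is correct and is the natural one: decompose $\mathrm{Exp}_{y_1}^{-1}\circ\mathrm{Exp}_{y_2}$ into the purely Riemannian transition $\exp_{y_1}^{-1}\circ\exp_{y_2}$, which by compactness of $M$ satisfies an unscaled Lipschitz bound in $d(y_1,y_2)$ that you then upgrade to $d(y_1,y_2)/\lvert X(x)\rvert$ via $\lvert X(x)\rvert\leq K_0$, and the frame rotation $C_{y_1}^{-1}\circ P\circ C_{y_2}$, whose deviation from the identity is controlled by $\max_j d_S(e^{y_1}_j,e^{y_2}_j)$ and hence by the hypothesis on the bases stated immediately before the lemma. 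You also correctly isolate where the content lies: the $1/\lvert X(x)\rvert$ scaling is genuinely needed only for the frame part, since the first basis vector is $X/\lvert X\rvert$ and its rotation near singularities is governed by Lemma~\ref{Dlgx}(2); the Riemannian part would already give a bound linear in $d(y_1,y_2)$ with a constant depending only on $M$.
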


\subsection{Shadowing lemma}
   The concept of \emph{shadowing} was first introduced by Sina\u{\i} \cite{Si72}, who proved that Anosov diffeomorphisms have the shadowing property and, moreover, that every pseudo-orbit is shadowed by a unique true orbit. Shadowing describes a situation in which a true orbit stays uniformly close to a given pseudo-orbit. The shadowing lemma for uniformly hyperbolic diffeomorphisms was fully developed by Bowen \cite{B70}; see also, e.g., \cite{CLP89,P99} for related references. For flows, however, the shadowing lemma is considerably more complicated. 
   
   In \cite{LST80}, Liao proposed the idea of \textquotedblleft quasi-hyperbolic strings\textquotedblright, showing that a special type of pseudo-orbit can still be shadowed by a true orbit in non-uniform hyperbolic dynamical systems. The first shadowing lemma for a single quasi-hyperbolic string was given by Liao for both diffeomorphisms \cite{LST79} and flows \cite{LST80,LST85}. It asserts that if the head and tail of a single quasi-hyperbolic string are sufficiently close, then the string is shadowed by a periodic orbit. For diffeomorphisms, a more general shadowing lemma for a sequence of quasi-hyperbolic strings was later given by Gan \cite{Gan2002}. Han and Wen \cite{HW18} extended Gan's result from diffeomorphisms to $C^1$ vector fields, thereby proving a shadowing lemma for sequences of quasi-hyperbolic strings for flows. In what follows, we review the basic form of the shadowing lemma for flows as established by Liao \cite{LST80,LST85}, which is fundamental to our paper.    
\begin{Definition}
   Let $\Lambda$ be an invariant set and $E\subset\mathcal{N}_{\Lambda\setminus\text{Sing}(X)}$ be an invariant subbundle of the scaled linear Poincar\'{e} flow $\psi^*_t$. For $C>0$, $\eta>0$, and $T>0$, a point $x\in\Lambda\setminus\text{Sing}(X)$ is called $(C,\eta,T,E)$-$\psi^*_t$-\textbf{contracting} if there exists an increasing sequence of times $$0=t_0<t_1<\cdots<t_n<\cdots, \text{ with $t_n \to +\infty$ as $n \to +\infty$},$$ such that $t_{i+1}-t_i\leq T$, for every $i\in\mathbb{N}$, and $$\prod_{i=0}^{n-1}\left\lVert\psi^*_{t_{i+1}-t_i}|_{E\left(\varphi_{t_i}(x)\right)}\right\rVert\leq Ce^{-\eta t_n},~\text{for every }n\in\mathbb{N}.$$ A point $x\in\Lambda\setminus\text{Sing}(X)$ is called $(C,\eta,T,E)$-$\psi^*_t$-\textbf{expanding} if it is $(C,\eta,T,E)$-$\psi^*_t$-contracting for the vector field $-X$. 	
\end{Definition}

\begin{Definition}
   Given $\eta>0$ and $T>0$. For every $x\in M\setminus\text{Sing}(X)$ and $T_0>T$, the orbit arc $\varphi_{[0,T_0]}$ is called $(\eta,T)$-$\psi^*_t$-\textbf{quasi-hyperbolic} with respect to a direct sum splitting $\mathcal{N}_x=E(x)\oplus F(x)$, if there exists a time partition $$0=t_0<t_1<\cdots<t_k=T_0,\text{ with $t_{i+1}-t_i\leq T$ for $i=0,1,\cdots,k-1$}$$ such that for every $n=0,1,\cdots,k-1$, the following inequalities hold $$\prod_{i=0}^{n-1}\left\lVert\psi^*_{t_{i+1}-t_i}|_{E\left(\varphi_{t_i}(x)\right)}\right\rVert\leq e^{-\eta t_n},\quad\prod_{i=n}^{k-1}m\left(\psi^*_{t_{i+1}-t_i}|_{F\left(\varphi_{t_i}(x)\right)}\right)\geq e^{\eta(t_k-t_n)},$$ $$\dfrac{\left\lVert\psi^*_{t_{n+1}-t_n}|_{E\left(\varphi_{t_n}(x)\right)}\right\rVert}{m\left(\psi^*_{t_{n+1}-t_n}|_{F\left(\varphi_{t_n}(x)\right)}\right)}\leq e^{-\eta(t_{n+1}-t_n)}.$$      	
\end{Definition}

   The following shadowing lemma for singular flows was established by Liao \cite{LST85}, which provides a method for finding periodic points. 
\begin{Theorem}\label{Shadow}
   Assume $X\in\mathfrak{X}^1(M)$, and let $\Lambda\subset M\setminus\text{Sing}(X)$ be an invariant set admitting a dominated splitting $\mathcal{N}_\Lambda=E\oplus F$ with respect to the scaled linear Poincar\'{e} flow. Given $\eta>0$ and $T>0$, for every $\alpha>0$ and $\varepsilon>0$, there exists $\mathcal{D}=\mathcal{D}(\alpha,\varepsilon)>0$ such that for every $(\eta,T)$-$\psi^*_t$-quasi-hyperbolic orbit segment $\varphi_{[0,T_0]}(x)$ satisfying that 
\begin{itemize}
   \item $d(x,\text{Sing}(X))>\alpha$ and $d(\varphi_{T_0}(x),\text{Sing}(X))>\alpha${\rm;}
		
   \item $x\in\Lambda$, $\varphi_{T_0}(x)\in\Lambda$ and $d(x,\varphi_{T_0}(x))<\mathcal{D}${\rm;} 
\end{itemize}  
   there exist a strictly increasing $C^1$ function $\theta:[0,T_0]\to\mathbb{R}$ and a periodic point $p\in M\setminus\text{Sing}(X)$ such that   
\begin{description}
   \item[(1)] $\theta(0)=0$ and $1-\varepsilon<\theta'(t)<1+\varepsilon$, for every $t\in[0,T_0]${\rm;}
		
   \item[(2)] $p$ is a periodic point with period $\theta(T_0)$ {\rm:} $\varphi_{\theta(T_0)}(p)=p${\rm;}
		
   \item[(3)] $d(\varphi_t(x),\varphi_{\theta(t)}(p))<\varepsilon\left\lvert X(\varphi_t(x))\right\rvert$, for every $t\in[0,T_0]${\rm.}
\end{description}    
\end{Theorem}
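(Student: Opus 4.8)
The plan, following Liao's strategy, is to replace the orbit arc by a finite composition of scaled sectional Poincar\'{e} maps in the Liao scaled charts of Section~\ref{LP}, to close this composition up using $d(x,\varphi_{T_0}(x))<\mathcal{D}$, to locate a fixed point of the resulting return map by a hyperbolic implicit--function argument, and finally to read this fixed point back as a genuine periodic orbit of the flow. First I would fix a partition $0=t_0<t_1<\cdots<t_k=T_0$ witnessing $(\eta,T)$-$\psi^*_t$-quasi-hyperbolicity, set $x_i=\varphi_{t_i}(x)$, and note that $x_i\in\Lambda$ for all $i$ (as $\Lambda$ is invariant), so the dominated splitting $E\oplus F$ and all the quasi-hyperbolicity inequalities are available along the whole arc. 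Since $t_{i+1}-t_i\le T$, Lemmas~\ref{well-def}, \ref{uni-con} and \ref{return} provide, with constants uniform in $i$, sectional Poincar\'{e} maps $\mathcal{P}_{x_i,x_{i+1}}$ on normal discs of radius comparable to $\lvert X(x_i)\rvert$, and a uniform bound $\lVert\psi^*_s\rVert\le C^*$ for $0\le s\le T$. Working in the charts $\mathrm{Exp}_{x_i}$ and forming the Liao scaled chart maps (so, by the identity $D_x\mathcal{P}^*_{x,\varphi_T(x)}=\psi^*_T|_{\mathcal{N}_x}$ of the excerpt, one gets $C^1$ maps $F_i$ of $\mathcal{N}_{x_i}$ with $F_i(0)=0$ and $D_0F_i=\psi^*_{t_{i+1}-t_i}|_{\mathcal{N}_{x_i}}$), I would restrict to a disc of a small radius $r$; by the uniform $C^1$-continuity of Lemmas~\ref{uni-con} and \ref{Eugx}, for $r$ small the $F_i$ are $C^1$-close to their linear parts on that disc, the quasi-hyperbolicity inequalities pass verbatim to those linear parts, and domination supplies a uniform lower bound on the angle between $E$ and $F$ together with invariant cone fields that the $F_i$ preserve in a tube around the arc.

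Next I would close the composition up. From $d(x_0,\mathrm{Sing}(X))>\alpha$, $d(x_k,\mathrm{Sing}(X))>\alpha$ and $d(x_0,x_k)<\mathcal{D}$, Lemma~\ref{Dlgx} bounds $\lvert X(x_0)\rvert$ and $\lvert X(x_k)\rvert$ below in terms of $\alpha$, while flowing from $N_{x_k}$ for a short time $\lvert\delta\rvert\lesssim d(x_0,x_k)$ gives, via Lemma~\ref{Eugx}, a $C^1$ transition map $G$ from a sub-disc of $N_{x_k}$ into $N_{x_0}$ whose scaled-chart expression has $\lvert G(0)\rvert\le c\,\mathcal{D}/\alpha$ and derivative close to an isometry (here domination also gives a uniform modulus of continuity for the splitting, so $E(x_k)$ is identified with $E(x_0)$ up to a small error). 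Then $H:=G\circ F_{k-1}\circ\cdots\circ F_0$ is a $C^1$ return map of a small disc about $0$ in $\mathcal{N}_{x_0}$ with $H(0)=G(0)$ of size $O(\mathcal{D}/\alpha)$ and $D_0H$ equal, up to that small error, to $\psi^*_{T_0}|_{\mathcal{N}_{x_0}}$, which by the previous step is hyperbolic with definite gap away from $1$ (contracting on $E$ by a factor $\le e^{-\eta T_0}$, expanding on $F$ by $\ge e^{\eta T_0}$). Hence $\mathrm{id}-D_0H$ is invertible along the splitting --- solve the $E$-component by forward contraction and the $F$-component by backward contraction of the inverse --- and the implicit function theorem (equivalently, a contraction-mapping argument) applied to $H-\mathrm{id}$, performed inside the cone fields above so the small nonlinear terms are absorbed, yields a unique fixed point $v_0^*$ with $\lvert v_0^*\rvert\lesssim\mathcal{D}/\alpha$. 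Choosing $\mathcal{D}=\mathcal{D}(\alpha,\varepsilon)$ so small that $\mathcal{D}/\alpha\ll r$ keeps $v_0^*$ in the domain, and iterating forward, the points $v_i$ of the resulting periodic $F$-string all satisfy $\lvert v_i\rvert\lesssim\mathcal{D}/\alpha$.

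Finally I would return to the flow. Put $p=\mathrm{Exp}_{x_0}(\lvert X(x_0)\rvert v_0^*)\in N_{x_0}$; by construction the orbit of $p$ crosses $N_{x_1},\dots,N_{x_k}$ in turn and then returns to $p$, so $p$ is a periodic point, and it is regular since $d(p,x_0)\lesssim(\mathcal{D}/\alpha)\lvert X(x_0)\rvert$ while $x_0$ is $\alpha$-far from $\mathrm{Sing}(X)$. For $t\in[t_i,t_{i+1}]$ I would define $\theta(t)$ to be $\theta(t_i)$ plus the time at which the orbit of $y_i:=\varphi_{\theta(t_i)}(p)$ meets the moving normal section $N_{\varphi_{t-t_i}(x_i)}$; by Lemma~\ref{return} and its obvious variant for general cross-sections, this is a strictly increasing $C^1$ function of $t-t_i$ vanishing at $t=t_i$ with derivative $1+O(\mathcal{D}/\alpha)$, so the pieces assemble into a strictly increasing $C^1$ map $\theta:[0,T_0]\to\mathbb{R}$ with $\theta(0)=0$. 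Absorbing the short closing time $\delta$ into the last interval gives $\varphi_{\theta(T_0)}(p)=p$, and shrinking $\mathcal{D}/\alpha$ forces $1-\varepsilon<\theta'(t)<1+\varepsilon$; this is items (1) and (2). For item (3), note that $\varphi_{\theta(t)}(p)$ equals $\mathcal{P}_{x_i,\varphi_{t-t_i}(x_i)}(y_i)$, which --- because this sectional Poincar\'{e} map fixes the center and $\lVert\psi^*_s\rVert\le C^*$ for $s\le T$ --- lies in the scaled disc $N_{\varphi_t(x)}(C^*(\mathcal{D}/\alpha)\lvert X(\varphi_t(x))\rvert)$; hence $d(\varphi_t(x),\varphi_{\theta(t)}(p))<\varepsilon\lvert X(\varphi_t(x))\rvert$ as soon as $\mathcal{D}/\alpha$ is small enough, and Lemma~\ref{Dlgx}(1) lets one freely compare $\lvert X\rvert$ along short subarcs where needed.

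The hardest part is the hyperbolic fixed-point step in this singular, merely-$C^1$ setting. The contraction and expansion rates are controlled only at the partition scale $\le T$, never instantaneously, and the splitting on $\Lambda$ is only dominated, not hyperbolic, so the genuine hyperbolicity driving the fixed-point argument is supplied solely by the quasi-hyperbolicity of the chosen arc, while domination is used only to keep the cone fields and the splitting uniformly under control in a tube around it; moreover every estimate must be carried out in Liao's scaled charts so that it survives the orbit's possible near-passages to $\mathrm{Sing}(X)$, the two $\alpha$-hypotheses being exactly what is needed to keep the closing-up map $G$ uniformly tame at the two endpoints and thereby to let $\mathcal{D}$ depend only on $\alpha$ and $\varepsilon$.
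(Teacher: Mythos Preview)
The paper does not actually prove this theorem: it is stated as a known result due to Liao~\cite{LST85} (see the sentence immediately preceding the statement), and is used throughout Sections~\ref{MA} and~\ref{PF} as a black box. So there is no in-paper proof to compare your proposal against.

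That said, your outline is a faithful reconstruction of Liao's strategy and is correct at the level of a sketch. The essential ingredients are all present: passage to the scaled sectional Poincar\'{e} maps in Liao charts so that estimates survive near-singular passages; use of the dominated splitting only for uniform angle/cone control while the actual hyperbolicity of the return map comes from the quasi-hyperbolicity of the arc; the closing step controlled by $\mathcal{D}/\alpha$ via Lemmas~\ref{Dlgx} and~\ref{Eugx}; and the reconstruction of $\theta$ from successive hitting times as in Lemma~\ref{return}. One point worth tightening if you were to write this out in full: the fixed-point argument is cleaner if formulated on the product $\prod_i \mathcal{N}_{x_i}(r)$ (solving for the entire periodic string at once, with the $E$-components propagated forward and the $F$-components backward), rather than as a fixed point of the single composed map $H$, since the latter formulation requires you to check a posteriori that all intermediate images remain in the chart domains---which you do address, but which in the string formulation comes for free from the two one-sided product bounds in the quasi-hyperbolicity definition.
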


\section{Density of periodic measures: proof of Theorem \ref{ThmA}}\label{MA}
   This section is devoted to proving Theorem \ref{ThmA}. Before proceeding, we first discuss the Oseledec splitting in the case where it is a dominated splitting. Let $\varphi_t$ be the $C^1$ flow generated by a vector field $X\in\mathfrak{X}^1(M)$, and let $\mu$ be an ergodic hyperbolic invariant measure that is not supported on any singularity. Assume that the Lyapunov exponents of the scaled linear Poincar\'{e} flow $\psi^*_t$ with respect to $\mu$ are $\chi_1<\chi_2<\cdots<\chi_k$. Then, on the $\Gamma$ of Oseledec regular points for $\mu$, there exists a measurable $\psi^*_t$-invariant splitting $$\mathcal{N}_\Gamma=E_1\oplus E_2\oplus\cdots\oplus E_k$$ of normal bundle, which satisfies $$\lim_{t\to\pm\infty}\dfrac{1}{t}\log\left\lVert\psi^*_t(v)\right \rVert=\chi_i,~\forall~v\in E_i,~v\neq 0,~i=1,2,\cdots,k.$$       
\begin{Lemma}\label{time}
   If the Oseledec splitting $$\mathcal{N}_\Gamma=E_1\oplus E_2\oplus\cdots\oplus E_k$$ of an ergodic hyperbolic invariant regular measure $\mu$ is a dominated splitting, then for every $0<\epsilon\ll\min\limits_{1\leq j\leq k}\{\lvert\chi_j\rvert\}$ and every $\delta\in(0,1)$, there exists a positive number $L=L(\epsilon,\delta)$ such that
\begin{itemize}
   \item for every $T\geq L$, there exists a measurable set $\Gamma^T=\Gamma^T(\epsilon,\delta)\subset\Gamma$ with $\mu\left(\Gamma^T\right)\geq 1-\delta${\rm ;}
		
   \item there exists a natural number $N=N(T)$ such that for every integer $n\geq N$ and every $x\in\Gamma^T$, $$e^{(\chi_j-\epsilon)nT}\leq\prod_{i=0}^{n-1}m\left(\psi^*_T|_{E_j\left(\varphi_{iT}(x)\right)}\right)\leq\prod_{i=0}^{n-1}\left\lVert\psi^*_T|_{E_j\left(\varphi_{iT}(x)\right)}\right\rVert\leq e^{(\chi_j+\epsilon)nT},~j=1,2,\cdots,k.$$
\end{itemize}     	
\end{Lemma}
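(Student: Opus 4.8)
The plan is to transfer the asymptotic information from the Oseledec theorem to the required uniform two‑sided estimate on a large set, using Birkhoff's ergodic theorem to get an almost‑everywhere statement, Egorov's theorem to make it uniform (thereby producing the integer $N$), and Kingman's subadditive ergodic theorem to control the relevant averages once the step $T$ is large. To sidestep the integrability issues caused by singularities, I would work first with the \emph{unscaled} linear Poincar\'{e} flow $\psi_t$ and then telescope: since $\psi^*_t=\frac{\lVert X(\cdot)\rVert}{\lVert X(\varphi_t(\cdot))\rVert}\,\psi_t$ and $m(cA)=c\,m(A)$, $\lVert cA\rVert=c\lVert A\rVert$ for $c>0$, for every regular $x$ and $n\ge1$ one has
\[
\prod_{i=0}^{n-1}\lVert\psi^*_T|_{E_j(\varphi_{iT}(x))}\rVert=\frac{\lVert X(x)\rVert}{\lVert X(\varphi_{nT}(x))\rVert}\prod_{i=0}^{n-1}\lVert\psi_T|_{E_j(\varphi_{iT}(x))}\rVert,
\]
and the same identity with $m(\cdot)$ in place of $\lVert\cdot\rVert$. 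Here $\lVert\psi_T|_{E_j}\rVert$ and $m(\psi_T|_{E_j})$ are \emph{bounded} functions on $\mathcal N$ (by the uniform bounds on $\lVert\psi_T\rVert$ and on $1/m(\psi_T)$), hence in $L^1(\mu)$; and, as recalled in the excerpt, $\psi_t$ and $\psi^*_t$ have the same Oseledec splitting $\mathcal N_\Gamma=E_1\oplus\cdots\oplus E_k$ and the same exponents, so $\frac1t\log\lVert\psi_t|_{E_j(x)}\rVert\to\chi_j$ and $\frac1t\log m(\psi_t|_{E_j(x)})\to\chi_j$ for $\mu$-a.e.\ $x$.

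Fix $j$ and set $\tilde a_j(T)=\int\log\lVert\psi_T|_{E_j}\rVert\,d\mu$, $\tilde b_j(T)=\int\log m(\psi_T|_{E_j})\,d\mu$. Submultiplicativity of the operator norm along the $\psi$-cocycle together with $\varphi$-invariance of $\mu$ makes $T\mapsto\tilde a_j(T)$ subadditive and $T\mapsto\tilde b_j(T)$ superadditive, so $\tilde a_j(T)/T$ and $\tilde b_j(T)/T$ converge as $T\to\infty$; applying Kingman's theorem to $n\mapsto\log\lVert\psi_{nT_0}|_{E_j(\cdot)}\rVert$ under $\varphi_{T_0}$ (for a fixed $T_0$) and comparing with the Oseledec limit, I would identify both limits as $\chi_j$ and also get $\tilde b_j(T)\le\chi_jT\le\tilde a_j(T)$ for all $T$. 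Then, given $\varepsilon\in(0,\min_j|\chi_j|)$ and $\delta\in(0,1)$, I would choose $L=L(\varepsilon,\delta)$ so that for all $T\ge L$ and all $j$: $\chi_jT\le\tilde a_j(T)<(\chi_j+\tfrac{\delta\varepsilon}{16k})T$ and $(\chi_j-\tfrac{\delta\varepsilon}{16k})T<\tilde b_j(T)\le\chi_jT$.

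Now fix $T\ge L$. By Birkhoff's theorem for $\varphi_T$, for $\mu$-a.e.\ $x$ the averages $\frac1n\sum_{i=0}^{n-1}\log\lVert\psi_T|_{E_j(\varphi_{iT}(x))}\rVert$ and $\frac1n\sum_{i=0}^{n-1}\log m(\psi_T|_{E_j(\varphi_{iT}(x))})$ converge, say to $f_j^+(x)$ and $f_j^-(x)$, with $\int f_j^+\,d\mu=\tilde a_j(T)$ and $\int f_j^-\,d\mu=\tilde b_j(T)$. Submultiplicativity gives $\frac1n\sum_{i=0}^{n-1}\log\lVert\psi_T|_{E_j(\varphi_{iT}(x))}\rVert\ge\frac1n\log\lVert\psi_{nT}|_{E_j(x)}\rVert\to\chi_jT$, so $f_j^+\ge\chi_jT$ a.e., and dually $f_j^-\le\chi_jT$ a.e. Hence $f_j^+-\chi_jT\ge0$ has integral $<\tfrac{\delta\varepsilon}{16k}T$, and Markov's inequality gives $\mu\{f_j^+>(\chi_j+\tfrac\varepsilon2)T\}<\tfrac{\delta}{8k}$, and likewise $\mu\{f_j^-<(\chi_j-\tfrac\varepsilon2)T\}<\tfrac{\delta}{8k}$. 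I would also use that $\frac1n\log\lVert X(x)\rVert\to0$ and $\frac1n\log\lVert X(\varphi_{nT}(x))\rVert\to0$ for $\mu$-a.e.\ $x$ (the second because the Lyapunov exponent of $\Phi_t$ along the flow direction vanishes). Let $B$ be the union of these $2k$ exceptional sets (total measure $<\delta/4$) and a set $\{|\log\lVert X\rVert|>R\}$ of measure $<\delta/8$; by Egorov's theorem there is $\Gamma^T\subset\Gamma\setminus B$ with $\mu(\Gamma^T)>1-\delta$ on which the $2k$ Birkhoff averages and $\frac1n\log\lVert X(\varphi_{nT}(\cdot))\rVert\to0$ all converge uniformly. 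Choosing $N=N(T)$ so that for $n\ge N$, $x\in\Gamma^T$ and every $j$ each of these is within $\tfrac\varepsilon8 T$ of its limit and $\tfrac1n|\log\lVert X(x)\rVert|\le\tfrac Rn<\tfrac\varepsilon8 T$, the telescoped identity yields, for $n\ge N$ and $x\in\Gamma^T$,
\[
\frac1n\log\prod_{i=0}^{n-1}\lVert\psi^*_T|_{E_j(\varphi_{iT}(x))}\rVert<\tfrac\varepsilon8 T+\tfrac\varepsilon8 T+\big(f_j^+(x)+\tfrac\varepsilon8 T\big)\le\tfrac{3\varepsilon}{8}T+(\chi_j+\tfrac\varepsilon2)T<(\chi_j+\varepsilon)T,
\]
and symmetrically $\frac1n\log\prod_{i=0}^{n-1}m(\psi^*_T|_{E_j(\varphi_{iT}(x))})>(\chi_j-\varepsilon)T$; exponentiating and using $m(\cdot)\le\lVert\cdot\rVert$ termwise gives the claimed chain of inequalities.

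The step I expect to be the main obstacle is the control of $\prod_{i=0}^{n-1}\lVert\psi^*_T|_{E_j(\varphi_{iT}(x))}\rVert$, the product of operator norms \emph{along the orbit}, rather than the norm $\lVert\psi^*_{nT}|_{E_j(x)}\rVert$ of the product: for a fixed $T$ these genuinely differ (the Birkhoff limit $f_j^+$ can strictly exceed $\chi_jT$ on a set of positive measure), so the naive estimate would fail. The resolution is exactly the subadditive‑ergodic‑theorem step, namely that the defect $\tilde a_j(T)/T-\chi_j$ tends to $0$ as $T\to\infty$; only because of this can one first choose $T$ large, then use Markov's inequality to secure a set of measure $>1-\delta$, and finally Egorov's theorem to make the convergence rate uniform and extract the integer $N$.
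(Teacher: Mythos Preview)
Your argument is correct, but it takes a different route from the paper's. The paper works directly with $\psi^*_t$ (there is in fact no integrability obstacle: both $\lVert\psi^*_T\rVert$ and $m(\psi^*_T)^{-1}$ are uniformly bounded, as follows from the compactification via the extended linear Poincar\'e flow), and its key step is the $L^1$ convergence $\int\big|\tfrac1T\log\lVert\psi^*_T|_{E_j}\rVert-\chi_j\big|\,d\mu\to0$, obtained from the pointwise Oseledec limit by dominated convergence. It then applies Birkhoff's theorem to the nonnegative function $f_{j,T}(x)=\big|\tfrac1T\log\lVert\psi^*_T|_{E_j(x)}\rVert-\chi_j\big|$ itself, uses Markov's inequality on the Birkhoff limit $g_{j,T}$, and extracts $N$ by exhausting $\{x:\overline N(x)\le n\}$. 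Your approach replaces the dominated-convergence step by Kingman's subadditive ergodic theorem together with the pointwise one-sided bound $f_j^+\ge\chi_jT$ coming from submultiplicativity, and replaces the exhaustion by Egorov; the telescoping through the unscaled $\psi_t$ and the correction $\lVert X(x)\rVert/\lVert X(\varphi_{nT}(x))\rVert$ is sound but unnecessary. What each buys: the paper's proof is shorter and avoids tracking the flow-speed correction; your version makes transparent \emph{why} one must first take $T$ large (the defect $\tilde a_j(T)/T-\chi_j$ is exactly the gap between the product of norms and the norm of the product), and it does not rely on identifying a dominating function. Note also that the paper uses the domination hypothesis only to remark that $E_j(x)$ varies continuously (so that $f_{j,T}$ is continuous); neither your argument nor the paper's actually needs continuity, since Birkhoff and Egorov apply to measurable functions.
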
      

\begin{proof}
   By the Oseledec Theorem, for the scaled linear Poincar\'{e} flow $\psi^*_t$, for every $x\in\Gamma$,
\begin{equation*}\label{*}
   \lim_{t\to\pm\infty}\dfrac{1}{t}\log m\left(\psi^*_t|_{E_j(x)}\right)=\lim_{t\to\pm\infty}\dfrac{1}{t}\log\left\lVert\psi^*_t|_{E_j(x)}\right\rVert=\chi_j,\text{ for }j=1,2,\cdots,k.\tag{$*$} 
\end{equation*}   
   Now, for every $0<\epsilon\ll\min\limits_{1\leq j\leq k}\{\lvert\chi_j\rvert\}$ and every $\delta\in(0,1)$, we proceed to prove the lemma. By equation $(*)$, for each $x\in\Gamma$ and each $j=1,2,\cdots,k$, $$\lim_{t\to\pm\infty}\left\lvert\dfrac{1}{t}\log\left\lVert\psi^*_t|_{E_j(x)}\right\rVert-\chi_j\right\rvert=0,~\lim_{t\to\pm\infty}\left\lvert\dfrac{1}{t}\log m\left(\psi^*_t|_{E_j(x)}\right)-\chi_j\right\rvert=0.$$ Consequently, $$\lim_{t\to\pm\infty}\int_{M}\left\lvert\dfrac{1}{t}\log\left\lVert\psi^*_t|_{E_j(x)}\right\rVert-\chi_j\right\rvert d\mu(x)=0,~\lim_{t\to\pm\infty}\int_{M}\left\lvert\dfrac{1}{t}\log m\left(\psi^*_t|_{E_j(x)}\right)-\chi_j\right\rvert d\mu(x)=0.$$ Thus, there exist positive numbers $L^+_j$ and $L^-_j$ such that the inequalities  $$\int_{M}\left\lvert\dfrac{1}{T}\log\left\lVert\psi^*_T|_{E_j(x)}\right\rVert-\chi_j\right\rvert d\mu(x)\leq\dfrac{\delta\epsilon}{12(1+k)},~\int_{M}\left\lvert\dfrac{1}{T}\log m\left(\psi^*_T|_{E_j(x)}\right)-\chi_j\right\rvert d\mu(x)\leq\dfrac{\delta\epsilon}{12(1+k)}$$ hold for all $T\geq L^+_j$ and $T\geq L^-_j$, respectively. Since the Oseledec splitting $$\mathcal{N}_\Gamma=E_1\oplus E_2\oplus\cdots\oplus E_k$$ is dominated, by \cite{BDV05}, it is continuous. Hence, for each fixed $j\in\{1,2,\cdots,k\}$ and every integer $T\geq\max\{L^+_j,~L^-_j\}$, the two function $$f_{j,T}(x):=\left\lvert\dfrac{1}{T}\log\left\lVert\psi^*_T|_{E_j(x)}\right\rVert-\chi_j\right\rvert,~\bar{f}_{j,T}(x):=\left\lvert\dfrac{1}{T}\log m\left(\psi^*_T|_{E_j(x)}\right)-\chi_j\right\rvert$$ are continuous on $\Gamma$. 
    
   On the other hand, the measure $\mu$ is invariant under $\varphi_T$. By the Birkhoff Ergodic Theorem, there exist two measurable functions $g_{j,T}$ and $\bar{g}_{j,T}$ such that for $\mu$-almost every $x\in M$, $$\lim_{n\to+\infty}\dfrac{1}{n}\sum_{i=0}^{n-1}f_{j,T}(\varphi_{iT})=\lim_{n\to+\infty}\dfrac{1}{nT}\sum_{i=0}^{n-1}\left\lvert\log\left\lVert\psi^*_T|_{E_j\left(\varphi_{iT}(x)\right)}\right\rVert-T\chi_j\right\rvert=g_{j,T}(x),$$ $$\lim_{n\to+\infty}\dfrac{1}{n}\sum_{i=0}^{n-1}\bar{f}_{j,T}(\varphi_{iT})=\lim_{n\to+\infty}\dfrac{1}{nT}\sum_{i=0}^{n-1}\left\lvert\log m\left(\psi^*_T|_{E_j\left(\varphi_{il}(x)\right)}\right)-T\chi_j\right\rvert=\bar{g}_{j,T}(x).$$ Moreover, it follows that
\begin{equation*}
   \int_Mf_{j,T}(x)d\mu(x)=\int_M\left\lvert\dfrac{1}{T}\log\left\lVert\psi^*_T\lvert_{E_j(x)}\right\rVert-\chi_j\right\rvert d\mu(x)=\int_Mg_{j,T}(x)d\mu(x)\leq\dfrac{\delta\epsilon}{36(1+k)}.\tag{$**$}
\end{equation*} 
\begin{equation*}
   \int_M\bar{f}_{j,T}(x)d\mu(x)=\int_M\left\lvert\dfrac{1}{T}\log m\left(\psi^*_T|_{E_j(x)}\right)-\chi_j\right\rvert d\mu(x)=\int_M\bar{g}_{j,T}(x)d\mu(x)\leq\dfrac{\delta\epsilon}{36(1+k)}.\tag{\dag}
\end{equation*}
   Define $$\Gamma^+_{j,T}=\left\{x\in\Gamma:~g_{j,T}(x)\leq\dfrac{\epsilon}{2}\right\},\qquad\Gamma^-_{j,T}=\left\{x\in\Gamma:~\bar{g}_{j,T}(x)\leq\dfrac{\epsilon}{2}\right\}.$$ By inequality $(**)$ and $(\dag)$, we have $$\mu\left(\Gamma^+_{j,T}\bigcap\Gamma^-_{j,T}\right)\geq 1-\dfrac{\delta}{9(1+k)}$$ and for every $x\in\Gamma^+_{j,T}\bigcap\Gamma^-_{j,T}$, $$\lim_{n\to+\infty}\dfrac{1}{nT}\sum_{i=0}^{n-1}\left\lvert\log\left\lVert\psi^*_T|_{E_j\left(\varphi_{iT}(x)\right)}\right\rVert-T\chi_j\right\rvert\leq\dfrac{\epsilon}{2},~\lim_{n\to+\infty}\dfrac{1}{nT}\sum_{i=0}^{n-1}\left\lvert\log m\left(\psi^*_T|_{E_j\left(\varphi_{iT}(x)\right)}\right)-T\chi_j\right\rvert\leq\dfrac{\epsilon}{2}.$$ In other words, for every $x\in\Gamma^+_{j,T}\bigcap\Gamma^-_{j,T}$, there exists $N_j(x)$ such that for every integer $n\geq N_j(x)$, the following inequalities hold $$\prod_{i=0}^{n-1}\left\lVert\psi^*_T|_{E_j\left(\varphi_{iT}(x)\right)}\right\rVert\leq e^{(\chi_j+\epsilon)nT},~\prod_{i=0}^{n-1}m\left(\psi^*_T|_{E_j\left(\varphi_{iT}(x)\right)}\right)\geq e^{(\chi_j-\epsilon)nT}.$$ 
   
   Let $$L:=\max_{1\leq j\leq k}\{L^+_j,~L^-_j\}.$$ For every $T\geq L$, define $$\Lambda^T:=\bigcap_{j=1}^k\left(\Gamma^+_{j,T}\bigcap\Gamma^-_{j,T}\right).$$ Then, $\Lambda^T\subset\Gamma$ and satisfies $$\mu\left(\Lambda^T\right)>1-\dfrac{\delta}{9}.$$ Now, for each $x\in\Lambda^T$, set $$N(x)=\max_{1\leq j\leq k}\left\{N_j(x)\right\}.$$ Then for every $j=1,2,\cdots,k$ and every integer $n\geq N(x)$,
\begin{equation*}
   e^{(\chi_j-\epsilon)nT}\leq\prod_{i=0}^{n-1}m\left(\psi^*_T|_{E_j\left(\varphi_{iT}(x)\right)}\right)\leq\prod_{i=0}^{n-1}\left\lVert\psi^*_T|_{E_j\left(\varphi_{iT}(x)\right)}\right\rVert\leq e^{(\chi_j+\epsilon)nT}.\tag{\dag\dag}
\end{equation*}   
   For $x\in\Lambda^T$, define $$\overline{N}(x):=\min\left\{N(x):~\text{ the inequality $(\dag\dag)$ holds for all integer } n\geq N(x)\right\}.$$ Denote $$\Lambda^T_n=\{x\in\Lambda^T:~\overline{N}(x)\leq n\}.$$ Then, $\Lambda^T_n\subset\Gamma^T_{n+1}$ (or more precisely, $\Lambda^T_n$ is increasing in $n$), and $$\Lambda^T=\bigcup_{n=1}^{+\infty}\Lambda^T_n.$$ Consequently, there exists an integer $n_0$ such that $\mu\left(\Lambda^T_{n_0}\right)>1-\delta$. To complete the proof of the lemma, we set $$N=N(T)=n_0,\quad\Gamma^T=\Lambda^T_{n_0}.$$  
\end{proof}   

   Now, we complete the proof of Theorem \ref{ThmA}.   
\begin{proof}[Proof of Theorem \ref{ThmA}]
   Let $C(M)$ denote the space of all continuous real-valued functions on $M$, and let $\{f_i\}_{i=1}^{+\infty}$ be a countable dense subset of $C(M)$. For any two measures $\mu,~\nu\in\mathcal{M}(X)$, define $$d_{\mathcal{M}}(\mu,\nu)=\sum_{i=1}^{+\infty}\dfrac{\left\lvert\int_{M}f_id\mu-\int_{M}f_id\nu\right\rvert}{2^i\lVert f_i\rVert}.$$ Then $d_{\mathcal{M}}(\cdot,\cdot)$ is a metric that induces the weak$^*$ topology on $\mathcal{M}(X)$. Given an ergodic hyperbolic invariant regular measure $\mu$, for every $\varepsilon>0$, we shall prove that there exists a periodic measure $\mu_p$ satisfying $$d_{\mathcal{M}}(\mu,\mu_p)<\varepsilon.$$ 
     
   First, choose $n$ large enough such that for every invariant measure $\nu$, the following holds
\begin{equation*}
   \sum_{i=n+1}^{+\infty}\dfrac{\left\lvert\int_{M}f_id\mu-\int_{M}f_id\nu\right\rvert}{2^i\left\lVert f_i\right\rVert}\leq\sum_{i=n+1}^{+\infty}\dfrac{1}{2^{i-1}}<\dfrac{\varepsilon}{2}.\tag{$\vartriangle$}
\end{equation*} 
   According to the Birkhoff Ergodic Theorem, there exists a $\varphi$-invariant set $\Lambda_B$ with full $\mu$-measure such that for every $x\in\Lambda_B$ and every $f\in C(M)$, $$\lim_{T\to+\infty}\dfrac{1}{T}\int^{T}_{0}f(\varphi_t(x))dt=\int_{M}fd\mu.$$ Thus, there exists $T_1>1$ such that for every $T>T_1$, every $x\in\Lambda_B$ and every $i=1,2,\cdots,n$,
\begin{equation*}
   \left\lvert\dfrac{1}{T}\int^{T}_{0}f_i(\varphi_t(x))dt-\int_Mf_id\mu\right\rvert<\dfrac{\varepsilon}{4n}\cdot\min_{1\leq i\leq n}\left\{2^i\left\lVert f_i\right\rVert\right\}. \tag{$\vartriangle\vartriangle$}
\end{equation*}   
	  
   Assuming that the Oseledec splitting $$\mathcal{N}_\Gamma=E_1\oplus E_2\oplus\cdots\oplus E_k$$ of the scaled linear Poincar\'{e} flow with respect to the ergodic hyperbolic invariant regular measure $\mu$ is a dominated splitting, we regroup it as $$\mathcal{N}_\Gamma=E_1\oplus E_2\oplus\cdots\oplus E_s\oplus E_{s+1}\oplus\cdots\oplus E_k$$ and write $$\mathcal{N}_\Gamma=E^s\oplus E^u,$$ where $E^s=E_1\oplus E_2\oplus\cdots\oplus E_s\text{ and } E^u=E_{s+1}\oplus E_{s+2}\oplus\cdots\oplus E_k$. Then, $$\mathcal{N}_\Gamma=E^s\oplus E^u$$ is a dominated splitting of the scaled linear Poincar\'{e} flow. Let $$\chi=\min_{1\leq j\leq k}\{\lvert\chi_j\rvert\}.$$ By Lemma \ref{time}, for every $0<\epsilon\ll\chi$ and every $\delta\in(0,1)$, there exists a positive number $L=L(\epsilon/2,\delta)$ such that
\begin{itemize}
   \item for every $T_0\geq L$, there exists a measurable set $\Gamma^{T_0}=\Gamma^{T_0}(\epsilon/2,\delta)\subset\Gamma$ with $\mu\left(\Gamma^{T_0}\right)\geq 1-\delta${\rm;}
		
   \item there exists a positive integer $N=N(T_0)$ such that for every integer $J\geq N$ and every $x\in\Gamma^{T_0}$, $$\prod_{i=0}^{J-1}\left\lVert\psi^*_{T_0}|_{E^s\left(\varphi_{iT_0}(x)\right)}\right\rVert\leq e^{-(\chi-\epsilon/2)JT_0},~\prod_{i=0}^{J-1}m\left(\psi^*_{T_0}|_{E^u\left(\varphi_{iT_0}(x)\right)}\right)\geq e^{(\chi-\epsilon/2)JT_0},$$ $$\text{ and }\dfrac{\left\lVert\psi^*_{T_0}|_{E^s(x)}\right\rVert}{m\left(\psi^*_{T_0}|_{E^u(x)}\right)}\leq e^{-T_0(\chi-\epsilon/2)}.$$     	  
\end{itemize}
   Fix an integer $T_0\geq\max\left\{T_1,~L(\epsilon/2,\delta)\right\}$ and set $\eta_0=(\chi-\epsilon/2)T_0$. For every $C>0$, we define the {\bf Pesin block } $\Lambda^{T_0}_{\eta_0}(C)$ by
\begin{equation*}
\begin{aligned}
   \Lambda^{T_0}_{\eta_0}(C)=\Bigg\{x\in\Gamma:&\prod_{i=0}^{J-1}\left\lVert\psi^*_{T_0}|_{E^s\left(\varphi_{iT_0}(x)\right)}\right\rVert\leq Ce^{-J\eta_0},~\forall~ J\geq 1,\\ &\prod_{i=0}^{J-1}m\left(\psi^*_{T_0}|_{E^u\left(\varphi_{iT_0}(x)\right)}\right)\geq C^{-1}e^{J\eta_0},~\forall~ J\geq 1,~d(x,\text{Sing}(X))\geq\dfrac{1}{C}\Bigg\}.
\end{aligned}   
\end{equation*}
   According to \cite[Proposition 5.3]{WYZ}, the set $\Lambda^{T_0}_{\eta_0}(C)$ is compact and $$\mu\left(\Lambda^{T_0}_{\eta_0}(C)\right)\to\mu\left(\Gamma^{T_0}\right)\text{~~as~~}C\to+\infty.$$ 
   
   Denote $$K=\max_{1\leq i\leq n}\left\{\lVert f_i\rVert\right\}.$$ Take $\gamma>0$ sufficiently small such that $$(2 K+1)\gamma<\dfrac{\varepsilon}{4n}\cdot\min_{1\leq i\leq n}\left\{2^i\left\lVert f_i\right\rVert\right\}.$$ Recall that $\lvert X(x)\rvert\leq K_0$, for every $x\in M$. Since each $f_i$ ($1\leq i\leq n$) is uniformly continuous on $M$, there exists $\xi\in[0,\gamma]$ sufficiently small such that for any $x,y\in M$ with $d(x,y)\leq\xi K_0$ and each $i=1,\cdots,n$, $$\left\lvert f_i(x)-f_i(y)\right\rvert<\gamma.$$ Fix $C$ large enough so that $\mu\left(\Lambda^{T_0}_{\eta_0}(C)\right)$ is arbitrarily close to $\mu(\Gamma^{T_0})$. For this fixed $C$, choose a positive integer $j_0=j_0(C)$ such that $C<e^{\frac{j_0T_0\epsilon}{2}}$. Then for every $x\in\Lambda^{T_0}_{\eta_0}(C)$ and every $J\geq 1$,  $$\prod_{i=0}^{J-1}\left\lVert\psi^*_{j_0T_0}|_{E^s\left(\varphi_{ij_0T_0}(x)\right)}\right\rVert\leq e^{-(\chi-\epsilon)Jj_0T_0},~\prod_{i=0}^{J-1}m\left(\psi^*_{j_0T_0}|_{E^u\left(\varphi_{ij_0T_0}(x)\right)}\right)\geq e^{(\chi-\epsilon)Jj_0T_0}.$$ Setting $T=j_0T_0$ and $\eta=(\chi-\epsilon)j_0T_0$, we consider the set
\begin{equation*}
\begin{aligned}
   \Lambda^{T}_{\eta}(C)=\Bigg\{x\in\Gamma:&\prod_{i=0}^{J-1}\left\lVert\psi^*_{T}|_{E^s\left(\varphi_{iT}(x)\right)}\right\rVert\leq e^{-J\eta},~\forall~ J\geq 1,\\ &\prod_{i=0}^{J-1}m\left(\psi^*_{T}|_{E^u\left(\varphi_{iT}(x)\right)}\right)\geq e^{J\eta},~\forall~ J\geq 1,~d(x,\text{Sing}(X))\geq\dfrac{1}{C}\Bigg\}.
\end{aligned}   
\end{equation*}	
   Take a point $x_0\in\Lambda^T_\eta(C)\cap\text{supp}(\mu)$. By the continuity of $X(x)$, there exists $r>0$ sufficiently small such that for every $x\in B(x_0,r)$, $$B(x_0,r)\subset B\left(x,\xi\left\lvert X(x)\right\rvert\right).$$ Let $\varepsilon'=\min\left\{\varepsilon,~\xi\right\}$. For every $\alpha\in(0,1/C)$, Theorem \ref{Shadow} provides a positive number $$\mathcal{D}=\mathcal{D}(\alpha,\varepsilon')>0.$$ Since $\mu\left(B(x_0,r)\cap\Lambda^T_\eta(C)\cap\text{supp}(\mu)\cap\Lambda_B\right)>0$, the Poincar\'{e} Recurrence Theorem implies that there exist a point $y\in B(x_0,r)\cap\Lambda^T_\eta(C)\cap\text{supp}(\mu)\cap\Lambda_B$ and a sufficiently large integer $l$ such that $$\varphi_{lT}(y)\in B(x_0,r)\cap\Lambda^T_\eta(C)\cap\text{supp}(\mu)\cap\Lambda_B\text{ and } d(y,\varphi_{lT}(y))<\mathcal{D}.$$ Thus, we obtain an $(\eta,T)$-$\psi^*_t$-quasi-hyperbolic orbit segment $\varphi_{[0,lT]}(y)$ satisfying
\begin{itemize}
   \item $d(y,\text{Sing}(X))>\alpha$ and $d(\varphi_{lT}(y),\text{Sing}(X))>\alpha${\rm;}
		
   \item $y\in\Lambda^T_\eta(C)$, $\varphi_{lT}(y)\in\Lambda^T_\eta(C)$ and $d(y,\varphi_{lT}(y))<\mathcal{D}${\rm.} 
\end{itemize}
   By Theorem \ref{Shadow}, there exist a strictly increasing $C^1$ function $\theta:[0,lT]\to\mathbb{R}$ and a periodic point $p\in M$ such that   
\begin{description}
   \item[(1)] $\theta(0)=0$ and $1-\gamma\leq1-\xi<\theta'(t)<1+\xi\leq1+\gamma$, for every $t\in[0,lT]${\rm;}
		
   \item[(2)] $p$ is a periodic point with period $\theta(lT)$ {\rm:} $\varphi_{\theta(lT)}(p)=p${\rm;}
		
   \item[(3)] $d(\varphi_t(y),\varphi_{\theta(t)}(p))<\varepsilon'\left\lvert X(\varphi_t(y))\right\rvert<\xi\left\lvert X(\varphi_t(y))\right\rvert$, for every $t\in[0,lT]${\rm.}
\end{description}
   Therefore, for each $i=1,2,\cdots,n$, we have 
\begin{equation*}
\begin{aligned}
   \left\lvert\dfrac{1}{\theta(lT)}\int_{0}^{\theta(lT)}f_i(\varphi_t(p))dt-\dfrac{1}{lT}\int_{0}^{\theta(lT)}f_i(\varphi_t(p))dt\right\rvert&\leq\left\lvert\dfrac{\theta(lT)}{lT}-1\right\rvert\cdot\dfrac{1}{\theta(lT)}\int_{0}^{\theta(lT)}f_i(\varphi_t(p))dt\\&\leq\gamma K.
\end{aligned}   
\end{equation*} 
   and    
\begin{equation*}
\begin{aligned}
   &\left\lvert\dfrac{1}{lT}\int_{0}^{\theta(lT)}f_i(\varphi_s(p))ds-\dfrac{1}{lT}\int_{0}^{lT}f_i(\varphi_t(y))dt\right\rvert=\dfrac{1}{lT}\left\lvert\int_{0}^{lT}f_i(\varphi_{\theta(t)}(p))d\theta(t)-\int_{0}^{lT}f_i(\varphi_t(y))dt\right\rvert\\\leq&\dfrac{1}{lT}\left\lvert\int_{0}^{lT}f_i(\varphi_{\theta(t)}(p))(\theta'(t)-1)dt\right\rvert+\dfrac{1}{lT}\left\lvert\int_{0}^{lT}\left[f_i(\varphi_{\theta(t)}(p))-f_i(\varphi_t(y))\right]dt\right\rvert\\\leq&\gamma K+\gamma.
\end{aligned}   
\end{equation*} 
   Thus, $$\left\lvert\dfrac{1}{\theta(lT)}\int_{0}^{\theta(lT)}f_i(\varphi_t(p))dt-\dfrac{1}{lT}\int_{0}^{lT}f_i(\varphi_t(y))dt\right\rvert<\dfrac{\varepsilon}{4n}\cdot\min_{1\leq i\leq n}\left\{2^i\left\lVert f_i\right\rVert\right\}.$$
	
   Denote by $\mu_p$ the invariant measure supported on the periodic orbit ${\rm Orb}(p)$. From the equations $(\vartriangle)$ and $(\vartriangle\vartriangle)$, we have    
\begin{equation*}
\begin{aligned}
   &d_{\mathcal{M}}(\mu,\mu_p)=\sum_{i=1}^{+\infty}\dfrac{\left\lvert \displaystyle\int_{M}f_id\mu-\int_{M}f_id\mu_p\right\rvert}{2^i\left\lVert f_i\right\rVert}=\sum_{i=1}^{n}\dfrac{\left\lvert \displaystyle\int_{M}f_id\mu-\int_{M}f_id\mu_p\right\rvert}{2^i\left\lVert f_i\right\rVert}+\sum_{i=n+1}^{+\infty}\dfrac{\left\lvert \displaystyle\int_{M}f_id\mu-\int_{M}f_id\mu_p\right\rvert}{2^i\left\lVert f_i\right\rVert}\\&\leq\sum_{i=1}^{n}\dfrac{\left\lvert \displaystyle\int_{M}f_id\mu-\dfrac{1}{lT}\int_{0}^{lT}f_i(\varphi_t(y))dt+\dfrac{1}{lT}\int_{0}^{lT}f_i(\varphi_t(y))dt-\int_{M}f_id\mu_p\right\rvert}{2^i\left\lVert f_i\right\rVert}+\sum_{i=n+1}^{+\infty}\dfrac{1}{2^{i-1}}\\&\leq\sum_{i=1}^{n}\dfrac{\left\lvert \displaystyle\int_{M}f_id\mu-\dfrac{1}{lT}\int_{0}^{lT}f_i(\varphi_t(y))dt\right\rvert+\left\lvert\dfrac{1}{lT} \displaystyle\int_{0}^{lT}f_i(\varphi_t(y))dt-\dfrac{1}{\theta(lT)}\int_{0}^{\theta(lT)}f_i(\varphi_t(p))dt\right\rvert}{2^i\left\lVert f_i\right\rVert}+\dfrac{\varepsilon}{2}\\&<\sum_{i=1}^{n}\dfrac{\dfrac{\varepsilon}{4n}\cdot\min\limits_{1\leq i\leq n}\left\{2^i\left\lVert f_i\right\rVert\right\}+\dfrac{\varepsilon}{4n}\cdot\min\limits_{1\leq i\leq n}\left\{2^i\left\lVert f_i\right\rVert\right\}}{2^i\left\lVert f_i\right\rVert}+\dfrac{\varepsilon}{2}\leq\sum_{i=1}^{n}\dfrac{\varepsilon}{2n}+\dfrac{\varepsilon}{2}<\varepsilon.
\end{aligned}   
\end{equation*}
   This completes the proof of Theorem \ref{ThmA}.          
\end{proof}

\section{Approximation of Lyapunov exponents: proof of Theorem \ref{ThmB}}\label{PF}   
   In this section, we prove Theorem \ref{ThmB}. Let $\varphi_t$ be the $C^1$ flow generated by a vector field $X\in\mathfrak{X}^1(M)$, and let $\mu$ be an ergodic hyperbolic invariant regular measure. Throughout, we assume that the Oseledec splitting $$\mathcal{N}_\Gamma=E_1\oplus E_2\oplus\cdots\oplus E_s\oplus E_{s+1}\oplus\cdots\oplus E_k$$ of the scaled linear Poincar\'{e} flow with respect to $\mu$ is dominated. When convenient, we denote this splitting as $$\mathcal{N}_\Gamma=E^s\oplus E^u,$$ where $E^s=E_1\oplus E_2\oplus\cdots\oplus E_s,\text{ and } E^u=E_{s+1}\oplus E_{s+2}\oplus\cdots\oplus E_k$ are called the stable and unstable bundles, respectively. Then $$\mathcal{N}_\Gamma=E^s\oplus E^u$$ is dominated. To prove Theorem \ref{ThmB}, we first establish estimates on the Lyapunov exponents of the scaled linear Poincar\'{e} flow $\psi^*_t$, stated as Propositions \ref{TLLE} and \ref{TSLE} below.         	  
\begin{Proposition}\label{TLLE}{\rm (The approximation of largest Lyapunov exponents)}
   Let $\mu$ be an ergodic hyperbolic invariant regular measure for a $C^1$ vector field $X\in\mathfrak{X}^1(M)$ on a compact smooth Riemannian manifold, and let $\lambda_1\leq\lambda_2\leq\cdots\leq\lambda_{d-1}$ denote the Lyapunov exponents of the scaled linear Poincar\'{e} flow $\psi^*_t$ with respect to $\mu$. If the Oseledec splitting $$\mathcal{N}_\Gamma=E_1\oplus E_2\oplus\cdots\oplus E_s\oplus E_{s+1}\oplus\cdots\oplus E_k$$ of the scaled linear Poincar\'{e} flow with respect to $\mu$ is dominated, then for every $\epsilon>0$, there exists a hyperbolic periodic point $p$ of the flow $\varphi_t$ such that $$\lvert\lambda_i-\lambda_i(p)\rvert<\epsilon,\quad\text{for each }i=d-d_k,d-d_k+1,\cdots,d-1,$$ where $d_k=\text{dim}(E_k)$, $\lambda_1(p)\leq\lambda_2(p)\leq\cdots\leq\lambda_{d-1}(p)$ are the Lyapunov exponents of the scaled linear Poincar\'{e} flow $\psi^*_t$ with respect to the periodic measure $\mu_p$ supported on ${\rm Orb}(p)$.     	
\end{Proposition}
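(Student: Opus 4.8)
The plan is to reduce the approximation of the top block of Lyapunov exponents to a combination of the shadowing lemma (Theorem \ref{Shadow}), the Birkhoff-type estimates in Lemma \ref{time}, and a careful comparison of the scaled linear Poincar\'e flow along the true orbit with the scaled linear Poincar\'e flow along the shadowing periodic orbit. First I would fix $\epsilon>0$ and, exactly as in the proof of Theorem \ref{ThmA}, choose a long time $T_0$, a Pesin block $\Lambda^{T_0}_{\eta_0}(C)$, and an integer $T=j_0T_0$ so that every point of $\Lambda^T_\eta(C)$ is $(\eta,T)$-$\psi^*_t$-quasi-hyperbolic with the dominated splitting $\mathcal N_\Gamma=E^s\oplus E^u$ and so that along such a point the product norms of $\psi^*_T|_{E^u}$ grow like $e^{(\chi-\epsilon')JT}$; in addition, using Lemma \ref{time} applied to the finer splitting $E_1\oplus\cdots\oplus E_k$, I would arrange that along the orbit the product of $\psi^*_T$ restricted to the top piece $E_k$ has both norm and mininorm within $\epsilon'$ of $e^{\chi_k JT}$, and similarly control the lower exponents $\chi_{i}$ that feed into $\lambda_{d-d_k},\dots,\lambda_{d-1}$. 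Then, picking $x_0\in\Lambda^T_\eta(C)\cap\mathrm{supp}(\mu)$, applying Poincar\'e recurrence to get a long quasi-hyperbolic return segment $\varphi_{[0,lT]}(y)$ with $d(y,\varphi_{lT}(y))<\mathcal D$, and invoking Theorem \ref{Shadow}, I obtain a hyperbolic periodic point $p$ whose orbit $\varepsilon'$-shadows $\varphi_{[0,lT]}(y)$ in the scaled sense $d(\varphi_t(y),\varphi_{\theta(t)}(p))<\varepsilon'|X(\varphi_t(y))|$.

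The second main step is to transfer the quantitative growth estimates from $y$ to $p$. Here I would use the Liao scaled charts and Lemma \ref{Eugx}: because the shadowing is within $\varepsilon'|X(\varphi_t(y))|$ of the true orbit and the bundles $E^u$, $E_k$ vary continuously (dominated splittings are continuous, by \cite{BDV05}), the derivative cocycles $D\mathcal P^*$ of the scaled sectional Poincar\'e maps along the two orbits are $C^0$-close in the charts, uniformly in the base point, provided $\varepsilon'$ was chosen small enough relative to the modulus of continuity in Lemma \ref{uni-con}. Consequently, the periodic orbit of $p$ carries a $D\mathcal P^*$-invariant dominated splitting close to the pullback of $E^s\oplus E^u$, and one can push this down further: on the period of $p$ the product norm and mininorm of $\psi^*$ restricted to the top invariant subbundle (the continuation of $E_k$) are within $O(\epsilon)$ of $e^{\chi_k\cdot\mathrm{period}}$, and similarly for the subbundles realizing $\chi_{k-1},\dots$ down to whatever is needed to capture $\lambda_{d-d_k},\dots,\lambda_{d-1}$. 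Since for a periodic point the Lyapunov exponents of $\psi^*_t$ are exactly the averaged logarithms of the norms of the eigenvalues of the return map, this yields $|\lambda_i-\lambda_i(p)|<\epsilon$ for $i=d-d_k,\dots,d-1$, and positivity of these $\lambda_i(p)$ (together with the fact that $p$ inherits a dominated, in fact partially hyperbolic, splitting from the quasi-hyperbolicity) gives that $p$ is a hyperbolic periodic point.

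The step I expect to be the genuine obstacle is the transfer in the previous paragraph, i.e.\ proving that the growth rates along the true orbit are inherited, up to $\epsilon$, by the shadowing periodic orbit. For diffeomorphisms this is where Liao's (or Ma\~n\'e's) argument closes the estimate, but, as the introduction stresses, one cannot naively reuse it for singular flows because the linear Poincar\'e flow is not defined near $\mathrm{Sing}(X)$ and the return maps are only controlled on scaled neighborhoods $N_x(\beta_T|X(x)|)$. So the delicate point is to keep all comparisons scaled by $|X|$ and to use that the quasi-hyperbolic segment we produced stays at distance $>\alpha$ from $\mathrm{Sing}(X)$, so that along it $|X|$ is bounded below and the charts of Lemma \ref{Eugx} are uniformly good; within that region the domination gives a uniform rate of convergence of nearby orbits' cocycles, and a telescoping/sub-additivity argument over the time partition $0=t_0<\dots<t_k=lT$ upgrades the per-step $C^0$-closeness of $D\mathcal P^*$ into closeness of the total products, hence of the periodic exponents. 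Making the dependence of all the small constants ($\varepsilon'$, $\mathcal D$, $\alpha$, $C$, $T_0$, $l$) consistent so that the final bound is $\epsilon$ rather than merely $O(\epsilon)$ is the bookkeeping core of the argument.
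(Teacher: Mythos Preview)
Your overall plan --- build a Pesin block, apply Liao's shadowing to a long recurrent segment, and then compare the cocycle of $\psi^*_t$ along $p$ with that along $y$ --- is the paper's strategy. But two of the devices you propose for the comparison step are not the ones that actually work, and in each case the paper supplies a different mechanism.

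First, your assertion that ``the quasi-hyperbolic segment we produced stays at distance $>\alpha$ from $\mathrm{Sing}(X)$, so that along it $|X|$ is bounded below'' is false: Theorem~\ref{Shadow} only places the \emph{endpoints} $y$ and $\varphi_{lT}(y)$ at distance $>\alpha$ from $\mathrm{Sing}(X)$; the interior of the segment may accumulate on singularities, and $|X|$ is not bounded below along it. The paper does not use Lemma~\ref{Eugx} here. Instead it exploits the \emph{extended} linear Poincar\'e flow $\widetilde\psi_t$, which is continuous on the compact base $\widetilde M$, to obtain a uniform $\sigma_1>0$ such that the comparison operator $(\psi^*_T|_{\mathcal N_{x_1}})^{-1}\circ\psi^*_T|_{\mathcal N_{x_2}}$ is $\zeta$-close to the identity whenever the relevant base points are $\sigma_1$-close. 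The scaled shadowing estimate $d(\varphi_t(y),\varphi_{\theta(t)}(p))<\varepsilon'|X(\varphi_t(y))|$ feeds directly into this uniform continuity with no lower bound on $|X|$ needed. Concretely, the paper writes $\psi^*_{\theta(lT)}|_{\mathcal N_p}$ as an alternating product of the true-orbit pieces $\psi^*_T|_{\mathcal N_{\varphi_{iT}(y)}}$ and correction operators $T_i(y,p)$ satisfying $|\log\|T_i(y,p)\||<\epsilon/2$; this is the precise telescoping that yields the upper bound $\lambda_{d-1}(p)\le\lambda_{d-1}+\epsilon$.

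Second, for the lower bound you appeal to a ``continuation of $E_k$'' as an invariant subbundle along $\mathrm{Orb}(p)$, but no such subbundle is available a priori: the periodic cocycle has its own Oseledec decomposition whose dimensions need not match $(d_1,\dots,d_k)$. The paper instead runs a cone argument. Using the domination $E_1\oplus\cdots\oplus E_{k-1}\prec E_k$ it fixes $\rho>1$, $\gamma\in(0,1)$ with $\gamma\rho>1$ so that $\psi^*_T\,\mathcal C_{\gamma\rho}(x)\subset\mathcal C_\rho(\varphi_T(x))$, and then $\sigma_4>0$ so that any linear map $\sigma_4$-close to the identity sends $\mathcal C_\rho$ into $\mathcal C_{\gamma\rho}$. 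Since $\|T_i(y,p)-\mathrm{I}\|\le\sigma_4$, the composite $\psi^*_T\circ T_i(y,p)$ preserves $\mathcal C_\rho$, and for $v\in\mathcal C_\rho(p)$ one obtains $\|v^{i+1}\|'\ge e^{-\epsilon/2}\,m\bigl(\psi^*_T|_{E_k(\varphi_{iT}(y))}\bigr)\,\|v^i\|'$ in the adapted max norm. Because $\mathcal C_\rho(p)$ contains a $d_k$-dimensional subspace, this forces $d_k$ of the exponents of $\mu_p$ to satisfy $\lambda_i(p)\ge\chi_k-\epsilon=\lambda_{d-1}-\epsilon$. That cone mechanism is the missing ingredient in your sketch.
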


\begin{proof}
   Let $$\chi=\min_{1\leq j\leq k}\{\lvert\chi_j\rvert\}.$$ Since the Oseledec splitting $\mathcal{N}_\Gamma=E^s\oplus E^u$ is dominated, it follows from Lemma \ref{time} that for every $0<\epsilon/4\ll\chi$ and every $\delta_1\in(0,1/6d)$, there exists a positive number $L=L(\epsilon/4,\delta_1)$ such that
\begin{itemize}
   \item for every $T\geq L$, there exists a measurable set $\Gamma^T=\Gamma^T(\epsilon/4,\delta_1)\subset\Gamma$ with $\mu(\Gamma^T)\geq 1-\delta_1$;
    	
   \item there exists a positive integer $N_1=N(T)$ such that for every integer $n\geq N_1$ and every $x\in\Gamma^T$, $$\prod_{i=0}^{n-1}\left\lVert\psi^*_T|_{E^s\left(\varphi_{iT}(x)\right)}\right\rVert\leq e^{-(\chi-\epsilon/4)nT},~\prod_{i=0}^{n-1}m\left(\psi^*_T|_{E^u\left(\varphi_{iT}(x)\right)}\right)\geq e^{(\chi-\epsilon/4)nT},$$ $$\text{ and }\dfrac{\left\lVert\psi^*_T|_{E^s}\right\rVert}{m\left(\psi^*_T|_{E^u}\right)}\leq e^{-T(\chi-\epsilon/4)}.$$     	  
\end{itemize}
   Fix an integer $T_0\geq L(\epsilon/4,\delta_1)$ and set $\eta_0=(\chi-\epsilon/4)T_0$. For every $C>0$, we define the {\bf Pesin block } $\Lambda^{T_0}_{\eta_0}(C)$ by
\begin{equation*}
\begin{aligned}
   \Lambda^{T_0}_{\eta_0}(C)=\Bigg\{x\in\Gamma:&\prod_{i=0}^{n-1}\left\lVert\psi^*_{T_0}|_{E^s\left(\varphi_{iT_0}(x)\right)}\right\rVert\leq Ce^{-n\eta_0},~\forall~ n\geq 1,\\ &\prod_{i=0}^{n-1}m\left(\psi^*_{T_0}|_{E^u\left(\varphi_{iT_0}(x)\right)}\right)\geq C^{-1}e^{n\eta_0},~\forall~ n\geq 1,~d(x,\text{Sing}(X))\geq\dfrac{1}{C}\Bigg\}.
\end{aligned}   
\end{equation*}
   According to \cite[Proposition 5.3]{WYZ}, $\Lambda^{T_0}_{\eta_0}(C)$ is compact and $$\mu\left(\Lambda^{T_0}_{\eta_0}(C)\right)\to\mu\left(\Gamma^{T_0}\right)\text{~~as~~}C\to+\infty.$$ Therefore, we can choose a sufficiently large constant $C>0$ such that $\mu\left(\Lambda^{T_0}_{\eta_0}(C)\right)>1-2\delta_1>0$. For this fixed $C$, there exists a positive integer $j_0=j_0(C)$ satisfying $C<e^{\frac{j_0T_0\epsilon}{4}}$. Consequently, for every $x\in\Lambda^{T_0}_{\eta_0}(C)$, $$\prod_{i=0}^{n-1}\left\lVert\psi^*_{j_0T_0}|_{E^s\left(\varphi_{ij_0T_0}(x)\right)}\right\rVert\leq e^{-(\chi-\epsilon/2)nj_0T_0},~\prod_{i=0}^{n-1}m\left(\psi^*_{j_0T_0}|_{E^u\left(\varphi_{ij_0T_0}(x)\right)}\right)\geq e^{(\chi-\epsilon/2)nj_0T_0},\text{ for }\forall~n\geq 1.$$ Setting $T=j_0T_0$ and $\eta=(\chi-\epsilon/2)j_0T_0$, we consider the set
\begin{equation*}
\begin{aligned}
   \Lambda^{T}_{\eta}(C)=\Bigg\{x\in\Gamma:&\prod_{i=0}^{n-1}\left\lVert\psi^*_{T}|_{E^s\left(\varphi_{iT}(x)\right)}\right\rVert\leq e^{-n\eta},~\forall~ n\geq 1,\\ &\prod_{i=0}^{n-1}m\left(\psi^*_{T}|_{E^u\left(\varphi_{iT}(x)\right)}\right)\geq e^{n\eta},~\forall~ n\geq 1,~d(x,\text{Sing}(X))\geq\dfrac{1}{C}\Bigg\}.
\end{aligned}   
\end{equation*}	 

   Given $\epsilon/4>0$, we can choose $\zeta>0$ such that $$\left\lvert\log(1+\zeta)\right\rvert<\dfrac{\epsilon}{4},\text{ and }\left\lvert\log(1-\zeta)\right\rvert<\dfrac{\epsilon}{4}.$$ Recall that the extended linear Poincar\'{e} flow $\widetilde{\psi}_t(v)$ varies continuously with respect to the vector field $X$, the time $t$ and the vector $v$. For a fixed $T>0$, since $\widetilde{\psi}^*_T$ and its inverse $\left(\widetilde{\psi}^*_T\right)^{-1}$ are both continuous on $$\widetilde{M}=\text{Closure}\left(\bigcup_{x\in M\backslash \text{Sing}(X)}\dfrac{X(x)}{\lvert X(x)\rvert}\right),$$ the map $$(x_1,x_2)\mapsto{\rm I}(x_1,x_2)=\left(\psi^*_T|_{\mathcal{N}_{x_1}}\right)^{-1}\circ\left(\psi^*_T|_{\mathcal{N}_{x_2}}\right)$$ is continuous for every $x_1,~x_2\in M\backslash\text{Sing}(X)$. Consequently, $${\rm I}(x_1,x_2)=\left(\psi^*_T|_{\mathcal{N}_{x_1}}\right)^{-1}\circ\left(\psi^*_T|_{\mathcal{N}_{x_2}}\right)$$ approaches the identity map $$\left(\psi^*_T|_{\mathcal{N}_{\varphi_T(x_2)}}\right)^{-1}\circ\left(\psi^*_T|_{\mathcal{N}_{x_2}}\right)={\rm I}$$ when both the distance between $x_1$ and $\varphi_T(x_2)$, and the distance between $\varphi_{-T}(x_1)$ and $x_2$ are sufficiently small. Let $\sigma_1>0$ be chosen such that $$\max\{d(x_1,\varphi_T(x_2)),d(\varphi_{-T}(x_1),x_2)\}\leq\sigma_1\Rightarrow 1-\zeta\leq\lVert {\rm I}(x_1,x_2)\rVert\leq1+\zeta.$$ Hence, $$\lvert\log\lVert {\rm I}(x_1,x_2)\rVert\rvert<\dfrac{\epsilon}{4}.$$ Such a $\sigma_1$ exists because $\widetilde{M}$ is compact. For a fixed $T>0$, the norm $$\lVert\psi^*_T\rVert=\sup\left\{\lvert\psi^*_T(v)\rvert:~v\in\mathcal{N},~\lvert v\rvert=1\right\}$$ is uniformly upper bounded on $\mathcal{N}$, and the co-norm $$m\left(\psi^*_T\right)=\inf\left\{\lvert\psi^*_T(v)\rvert:~v\in\mathcal{N},~\lvert v\rvert=1\right\}$$ is uniformly bounded away from $0$ on $\mathcal{N}$. Since the flow $\varphi_t$ is $C^1$, there exists $\sigma'_2>0$ such that for any $t_1,~t_2\in\mathbb{R}$ with $\lvert t_1-t_2\rvert<\sigma'_2$, $$d(\varphi_{t_1}(x),\varphi_{t_2}(x))<\dfrac{\sigma_1}{2},\text{ for every }x\in M.$$ Moreover, for the $\zeta>0$ chosen above, there exists $\sigma''_2>0$ such that for every $s\in(T-\sigma''_2,T+\sigma''_2)$ and every $x\in M\backslash\text{Sing}(X)$, $$\left\lvert\left\lVert\left(\psi^*_T|_{\mathcal{N}_{\varphi_s(x)}}\right)^{-1}\circ\psi^*_s|_{\mathcal{N}_x}\right\rVert-\left\lVert\left(\psi^*_s|_{\mathcal{N}_{\varphi_s(x)}}\right)^{-1}\circ\psi^*_s|_{\mathcal{N}_x}\right\rVert\right\rvert\leq\zeta.$$ Set $\sigma_2=\min\{\sigma'_2,\sigma''_2\}$. Then for every $s$ with $\left\lvert s-T\right\rvert<\sigma_2$, we obtain $$\left\lvert\log\left\lVert\left(\psi^*_T|_{\mathcal{N}_{\varphi_s(x)}}\right)^{-1}\circ\psi^*_s|_{\mathcal{N}_x}\right\rVert\right\rvert<\dfrac{\epsilon}{4}.$$
   
   For estimating the Lyapunov exponents, we state the following claim concerning invertible linear maps.   
\begin{claim}
   Let $A,~B:\mathbb{R}^d\to\mathbb{R}^d$ be two invertible linear maps. For every $\epsilon_1>0$, there exists $\sigma_3>0$ such that if $\lVert B-I\rVert\leq\sigma_3$, then
\begin{equation*}
   \lVert ABv\rVert\geq e^{-\epsilon_1}\lVert Av\rVert,\quad \forall~v\in\mathbb{R}^d.\tag{\S}
\end{equation*}   
\end{claim} 

\begin{proof}[Proof of the Claim]
   It suffices to prove the inequality $(\S)$ for unit vectors $v\in\mathbb{R}^n$ with $\lVert v\rVert=1$. Fix $\epsilon_1>0$ and choose $\epsilon_2>0$ such that $1-\epsilon_2>e^{-\epsilon_1}$. Define $$\sigma_3=\dfrac{m(A)}{\lVert A\rVert}\epsilon_2,$$ where $m(A)=\inf\limits_{\lVert\omega\rVert=1}\lVert A\omega\rVert$ denotes the co-norm of $A$. Then, $$\lVert A\rVert\cdot\lVert B-I\rVert\leq\epsilon_2m(A),$$ whenever $\lVert B-I\rVert\leq\sigma_3$. Now assume $\lVert B-I\rVert\leq\sigma_3$.  For every unit vector $v$,
\begin{equation*}
\begin{aligned}
   \lVert ABv\rVert&=\lVert Av+A(B-I)v\rVert\geq \lVert Av\rVert-\lVert A\rVert\cdot\lVert B-I\rVert\cdot\lVert v\rVert=\lVert Av\rVert-\lVert A\rVert\cdot\lVert B-I\rVert\\&\geq\lVert Av\rVert-\epsilon_2m(A)\geq(1-\epsilon_2)\lVert Av\rVert\geq e^{-\epsilon_1}\lVert Av\rVert.
\end{aligned}
\end{equation*}      
\end{proof}   
   
   Take a point $x\in M$ and a sufficiently small neighborhood $U(x)$, similarly to the technique used in \cite{PM93}, by parallel transporting vectors along the unique geodesic in the Sasaki metric, we can locally trivialize the tangent bundle $T_{U(x)}M=U(x)\times\mathbb{R}^d$ of $U(x)$. Consequently, for every $y\in U(x)$, we can parallel translate vectors from $T_xM$ to $T_yM$, and hence also translate vectors from $\mathcal{N}_x$ to $\mathcal{N}_y$. According to Lemma \ref{Dlgx}, there exists a constant $\beta^*_0>0$ such that for every $x\in\Lambda^{T}_{\eta}(k)$, every point $y\in U(x)=B(x,\beta^*_0\lvert X(x)\rvert)$ is not a singularity. For every $\rho>1$, we define the $\rho$-cone in $\mathcal{N}_y$ by $$\mathcal{C}_\rho(y)=\{v\in \mathcal{N}_y:~\lVert v_k\rVert\geq\rho\lVert v_j\rVert,~j=1,2,\cdots,k-1\},$$ where $v=v_1+v_2+\cdots+v_k$ with $v_j\in E_j(x)$ for $j=1,2,\cdots,k$, and $E_j(x)$ denotes the Oseledec subspace. In other words, after translating the splitting $\mathcal{N}_x=E_1\oplus E_2\oplus\cdots\oplus E_k$ to $\mathcal{N}_y$, the vector $v\in\mathcal{N}_y$ decomposes correspondingly as $v=v_1+v_2+\cdots+v_k$. For computational purposes, we can define an equivalent norm $\lVert\cdot\rVert'$ on $\mathcal{N}_y$ by $$\lVert v\rVert'=\max\{\lVert v_1\rVert,\lVert v_2\rVert,\cdots,\lVert v_k\rVert\}.$$ Since the Oseledec splitting $$\mathcal{N}_\Gamma=E_1\oplus E_2\oplus\cdots\oplus E_k$$ is dominated for $\psi^*_T$, there exist two positive numbers $\gamma\in(0,1)$ and $\rho>1$ such that $\gamma\rho>1$ and $$\psi^*_{T}\mathcal{C}_{\gamma\rho}(x)\subset\mathcal{C}_{\rho}(\varphi_{T}(x)),\quad\forall~x\in\Lambda^{T}_{\eta}(C).$$ Furthermore, we can choose $\sigma_4=\sigma_4(\gamma,\rho)>0$ such that for every linear isomorphism $\mathcal{T}$, $$\lVert \mathcal{T}-{\rm I}\rVert\leq\sigma_4\Longrightarrow \mathcal{T}\mathcal{C}_{\rho}\subset\mathcal{C}_{\gamma\rho}.$$   
   
   Take $y\in\Lambda^{T}_{\eta}(C)\cap{\rm supp}(\mu)$. By Poincar\'{e} Recurrence Theorem, there exists an increasing sequence of integers  $\{l_n\}$ such that $$d(y,\varphi_{l_nT}(y))\rightarrow 0 \text{ as }n\rightarrow+\infty.$$ Set $$\sigma_3=\dfrac{m\left(\psi^*_T\right)}{\left\lVert\psi^*_T\right\rVert}\left(1-e^{-\epsilon/4}\right).$$ For $0<\varepsilon<\min\left\{\beta^*_0,\xi,\dfrac{r_0}{K_0},\dfrac{\epsilon}{4},\dfrac{\sigma_1}{2K_0},\dfrac{\sigma_2}{K_0},\sigma_3,\log\left(1+\sigma_4\right)\right\}$ and $\alpha\in(0,1/C)$, Theorem \ref{Shadow} provides a constant $\mathcal{D}=\mathcal{D}(\varepsilon,\alpha)>0$. For sufficiently large $l_n$, we can obtain a $(\eta,T)$-$\psi^*_t$-quasi-hyperbolic orbit segment $\varphi_{[0,l_nT]}(y)$ satisfying:
\begin{itemize}
   \item $d(y,\text{Sing}(X))>\alpha$ and $d(\varphi_{l_nT}(y),\text{Sing}(X))>\alpha${\rm;}
   	
   \item $y\in\Lambda^{T}_{\eta}(C)$, $\varphi_{l_nT}(y)\in\Lambda^{T}_{\eta}(C)$ and $d(y,\varphi_{l_nT}(y))<\mathcal{D}${\rm.} 
\end{itemize}
   By Theorem \ref{Shadow}, there exist a strictly increasing $C^1$ function $\theta:[0,l_nT]\to\mathbb{R}$ and a periodic point $p\in M$ such that
\begin{itemize}
   \item[(1)] $\theta(0)=0$ and $1-\varepsilon<\theta'(t)<1+\varepsilon$, for every $t\in[0,l_nT]${\rm;}
   	
   \item[(2)] $p$ is a periodic point with period $\theta(l_nT)$ {\rm:} $\varphi_{\theta(l_nT)}(p)=p${\rm;}
   	
   \item[(3)] $d(\varphi_t(y),\varphi_{\theta(t)}(p))<\varepsilon\lvert X(\varphi_t(y))\rvert$, for every $t\in[0,l_nT]${\rm.}
\end{itemize}
   Moreover, by Proposition 4.4 in \cite{WYZ}, there exists a constant $N=N(\eta,T)$ such that $$\left\lvert\theta(iT)-iT\right\rvert\leq Nd(y,\varphi_{l_nT}(y)),\text{ for }i=1,2,\cdots,l_n.$$ Selecting $l_n$ large enough so that $$d(y,\varphi_{l_nT}(y))<\dfrac{\sigma_2}{2N},$$ and writing simply $l=l_n$ when no confusion arises, we have $$\dfrac{1}{lT}\sum_{i=0}^{l-1}\log\left\lVert\psi^*_{T}|_{\mathcal{N}_{\varphi_{iT}(y)}}\right\rVert\leq\lambda_{d-1}+\dfrac{\epsilon}{2}.$$ The point $p$ is periodic with period $\theta(lT)$. Associated with this orbit is the invariant periodic measure $$\mu_p=\dfrac{1}{\theta(lT)}\int_{0}^{\theta(lT)}\delta_{\varphi_t(p)}dt,$$ supported on ${\rm Orb}(p)$. Let $$\lambda_1(p)\leq\lambda_2(p)\leq\cdots\leq\lambda_{d-1}(p)$$ denote the Lyapunov exponents of the scaled linear Poincar\'{e} flow $\psi^*_t$ with respect to $\mu_p$. In order to compute these exponents, we first establish the following claim.
\begin{claim}
   For each $i=0,1,\cdots,l-1$, define $$T_i(y,p)=\left(\psi^*_T|_{\mathcal{N}_{\varphi_{(i+1)T}(y)}}\right)^{-1}\circ\psi^*_{\theta((i+1)T)-\theta(iT)}|_{\mathcal{N}_{\varphi_{\theta(iT)}(p)}}.$$ Then $$\left\lvert\log\left\lVert T_i(y,p)\right\rVert\right\rvert<\dfrac{\epsilon}{2}.$$  	
\end{claim}

\begin{proof}[Proof of the Claim]
   For each $i=0,1,\cdots,l-1$, the composition  $$\psi^*_T|_{\mathcal{N}_{\varphi_{\theta((i+1)T)-T}(p)}}\circ\left(\psi^*_T|_{\mathcal{N}_{\varphi_{\theta((i+1)T)}(p)}}\right)^{-1}$$ equals the identity. Hence, we consider the operator $$\left(\psi^*_T|_{\mathcal{N}_{\varphi_{(i+1)T}(y)}}\right)^{-1}\circ\psi^*_T|_{\mathcal{N}_{\varphi_{\theta((i+1)T)-T}(p)}}\circ\left(\psi^*_T|_{\mathcal{N}_{\varphi_{\theta((i+1)T)}(p)}}\right)^{-1}\circ\psi^*_{\theta((i+1)T)-\theta(iT)}|_{\mathcal{N}_{\varphi_{\theta(iT)}(p)}}.$$ Since $$\lvert\theta(iT)-iT\rvert\leq Nd(y,\varphi_{lT}(y)),\quad\text{for }i=0,1,2,\cdots,l,$$ we have
\begin{equation*}
\begin{aligned}
   &\left\lvert\theta((i+1)T)-T-\theta(iT)\right\rvert= \left\lvert\theta((i+1)T)-\theta(iT)-((i+1)T-iT)\right\rvert\\=&\left\lvert\theta((i+1)T)-(i+1)T-(\theta(iT)-iT)\right\rvert\leq\left\lvert\theta((i+1)T)-(i+1)T\right\rvert+\left\lvert(\theta(iT)-iT)\right\rvert\\\leq&2Nd(y,\varphi_{lT}(y))<\sigma_2.
\end{aligned}  
\end{equation*}
   Consequently, $\theta((i+1)T)-\theta(iT)\in(T-\sigma_2,T+\sigma_2)$, for vevery $i=0,1,\cdots,l-1$. Therefore, $$d(\varphi_{\theta((i+1)T)-T}(p),\varphi_{\theta(iT)}(p))<\dfrac{\sigma_1}{2},$$ and $$\left\lvert\log\left\lVert\left(\psi^*_T|_{\mathcal{N}_{\varphi_{\theta((i+1)T)}(p)}}\right)^{-1}\circ\psi^*_{\theta((i+1)T)-\theta(iT)}|_{\mathcal{N}_{\varphi_{\theta(iT)}(p)}}\right\rVert\right\rvert<\dfrac{\epsilon}{4},\text{ for every $i=0,1,\cdots,l-1$}.$$ On the other hand, since $$d(\varphi_t(y),\varphi_{\theta(t)}(p))<\varepsilon\lvert X(\varphi_t(y))\rvert,\text{ for every }t\in[0,lT],$$ we obtain $$d(\varphi_{iT}(y),\varphi_{\theta(iT)}(p))<\varepsilon\lvert X(\varphi_{iT}(y))\rvert<\sigma_1,\quad\text{for every }i=0,1,2,\cdots,l.$$ Thus, $$\left\lvert\log\left\lVert\left(\psi^*_T|_{\mathcal{N}_{\varphi_{(i+1)T}(y)}}\right)^{-1}\circ\psi^*_T|_{\mathcal{N}_{\varphi_{\theta((i+1)T)-T}(p)}}\right\rVert\right\rvert<\dfrac{\epsilon}{4}.$$ Combining these estimates, we conclude that $$\left\lvert\log\left\lVert T_i(y,p)\right\rVert\right\rvert<\dfrac{\epsilon}{2}.$$       	
\end{proof}
   Since $$\psi^*_{\theta(lT)}|_{\mathcal{N}_p}=\psi^*_T|_{\mathcal{N}_{\varphi_{(l-1)T}(y)}}\circ T_{l-1}(y,p)\circ\cdots\circ\psi^*_T|_{\mathcal{N}_{\varphi_T(y)}}\circ T_1(y,p)\circ\psi^*_T|_{\mathcal{N}_y}\circ T_0(y,p),$$ we have that 
\begin{equation*}
\begin{aligned}
   \lambda_{d-1}(p)&=\lim_{J\rightarrow+\infty}\dfrac{1}{J\theta(lT)}\log\left\lVert\psi^*_{J\theta(lT)}|_{\mathcal{N}_p}\right\rVert=\lim_{J\rightarrow+\infty}\dfrac{1}{J\theta(lT)}\log\left\lVert \left(\psi^*_{\theta(lT)}|_{\mathcal{N}_p}\right)^J\right\rVert\\&\leq\dfrac{1}{\theta(lT)}\log\left\lVert\psi^*_{\theta(lT)}|_{\mathcal{N}_p}\right\rVert\leq\dfrac{1}{\theta(lT)}\left(\sum_{i=0}^{l-1}\log\left\lVert\psi^*_T|_{\mathcal{N}_{\varphi_{iT}(y)}}\right\rVert+\sum_{i=0}^{l-1}\log\left\lVert T_i(y,p)\right\rVert\right).
\end{aligned}
\end{equation*}
   Because $1-\varepsilon<\theta'(t)<1+\varepsilon$, for every $t\in[0,lT]$, $$(1-\varepsilon)lT<\lvert\theta(lT)\rvert<(1+\varepsilon)lT.$$ Hence, $$\lambda_{d-1}(p)\leq\dfrac{lT(1+\varepsilon)}{\theta(lT)}\cdot\dfrac{1}{lT}\left(\sum_{i=0}^{l-1}\log\left\lVert\psi^*_T|_{\mathcal{N}_{\varphi_{iT}(y)}}\right\rVert+\sum_{i=0}^{l-1}\log\left\lVert T_i(y,p)\right\rVert\right)\leq\lambda_{d-1}+\epsilon.$$
   
   Since $\varepsilon<\log(1+\sigma_4)$, by an argument similar to that in the claim, we have $$\lVert T_i(y,p)-{\rm I}\rVert\leq\sigma_4,\text{ for every }i=0,1,\cdots,l-1$$ and $$\lVert T_i(y,p) v\rVert'\geq e^{-\frac{\epsilon}{2}}\lVert v\rVert',\quad\forall~ v\in\mathcal{N}_{\varphi_{\theta(iT)}(p)}\big\backslash\{0\},\text{ for every }i=0,1,\cdots,l-1.$$ Consequently, for each $i=0,1,\cdots,l-1$,
\begin{equation*}
   \psi^*_{T}|_{\mathcal{N}_{\varphi_{iT}(y)}}\circ T_i(y,p)\mathcal{C}_{\rho}\left(\varphi_{\theta(iT)}(p)\right)\subset\mathcal{C}_{\rho}\left(\varphi_{\theta((i+1)T)}(p)\right).\tag{$\vartriangle$}
\end{equation*}
   For every $v=(v_1,v_2,\cdots,v_k)\in\mathcal{C}_\rho(p)$ and each $i=0,1,\cdots,l-1$, define $$\bar{v}^0=v,\quad \bar{v}^i=T_i(y,p)\circ\psi^*_{T}|_{\mathcal{N}_{\varphi_{(i-1)T}(y)}}\circ T_{i-1}(y,p)\circ\cdots\circ\psi^*_{T}|_{\mathcal{N}_y}\circ T_0(y,p)(v)$$ and $$v^{i+1}=\psi^*_{T}|_{\mathcal{N}_{\varphi_{iT}(y)}}\circ T_i(x,p)\circ\psi^*_{T}|_{\mathcal{N}_{\varphi_{(i-1)T}(y)}}\circ T_{i-1}(x,p)\circ\cdots\circ\psi^*_{T}|_{\mathcal{N}_y}\circ T_0(y,p)(v).$$ Corresponding to the Oseledec splitting $$\mathcal{N}_\Gamma=E_1\oplus E_2\oplus\cdots\oplus E_k,$$ write $$\bar{v}^i=\bar{v}^i_1+\bar{v}^i_2+\cdots+\bar{v}^i_k,\quad v^i=v^i_1+v^i_2+\cdots+v^i_k.$$ Then, for each $i=0,1,\cdots,l-1$, we have
\begin{equation}
\begin{aligned}
   \left\lVert v^{i+1}\right\rVert'=\left\lVert v^{i+1}_k\right\rVert&=\left\lVert\psi^*_T(\bar{v}^i_k)\right\rVert\geq m\left(\psi^*_T|_{E_k(\varphi_{iT}(y))}\right)\left\lVert\bar{v}^i_k\right\rVert=m\left(\psi^*_{T}|_{E_k(\varphi_{iT}(y))}\right)\left\lVert\bar{v}^i\right\rVert'\\&=m\left(\psi^*_{T}|_{E_k(\varphi_{iT}(y))}\right)\left\lVert T_i(x,p)v^i\right\rVert'\geq e^{-\frac{\epsilon}{2}}m\left(\psi^*_{T}|_{E_k(\varphi_{iT}(y))}\right)\left\lVert v^i\right\rVert'\\&\geq e^{-\frac{\epsilon}{2}(i+1)}\prod^{i}_{j=0}m\left(\psi^*_{T}|_{E_k(\varphi_{jT}(y))}\right)\lVert v\rVert'.
\end{aligned}
\end{equation}   
   Because $\mathcal{C}_\rho(p)$ contains a subspace of dimension $d_k$, the estimates in $(\Delta)$, $(1)$ together with Lemma \ref{time} imply that there exists a subset $H\subset\{1,2,\cdots,d-1\}$ with $\sharp(H)=d_k$ such that $$\lambda_i(p)\geq\lambda_{d-1}-\epsilon,\quad\text{for each }i\in H.$$ Since the Lyapunov exponents of $\mu_p$ are arranged in ascending order, $$\lambda_i(p)\geq\lambda_i-\epsilon,\quad\text{for each $i=d-d_k,d-d_k+1,\cdots,d-1$}.$$ This completes the proof of the proposition.
\end{proof}

\begin{Proposition}\label{TSLE}{\rm (The approximation of smallest Lyapunov exponents)}
   Let $\mu$ be an ergodic hyperbolic invariant regular measure for a $C^1$ vector field $X\in\mathfrak{X}^1(M)$ on a compact smooth Riemannian manifold, and let $\lambda_1\leq\lambda_2\leq\cdots\leq\lambda_{d-1}$ denote the Lyapunov exponents of the scaled linear Poincar\'{e} flow $\psi^*_t$ with respect to $\mu$. If the Oseledec splitting $$\mathcal{N}_\Gamma=E_1\oplus E_2\oplus\cdots\oplus E_s\oplus E_{s+1}\oplus\cdots\oplus E_k$$ of the scaled linear Poincar\'{e} flow with respect to $\mu$ is dominated, then for every $\epsilon>0$, there exists a hyperbolic periodic point $p$ of the flow $\varphi_t$ such that $$\left\lvert\lambda_i-\lambda_i(p)\right\rvert<\epsilon,\text{ for each }i=1,2,\cdots,d_1,$$ where $d_1=\text{dim}(E_1)$, $\lambda_1(p)\leq\lambda_2(p)\leq\cdots\leq\lambda_{d-1}(p)$ are the Lyapunov exponents of the scaled linear Poincar\'{e} flow $\psi^*_t$ with respect to the periodic measure $\mu_p$ supported on ${\rm Orb}(p)$.    	
\end{Proposition}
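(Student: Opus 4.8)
The plan is to deduce Proposition \ref{TSLE} from Proposition \ref{TLLE} by passing to the reversed vector field $-X$. Recall from Subsection \ref{LEF} that every ergodic hyperbolic invariant regular measure $\mu$ of $\varphi_t$ is also an ergodic hyperbolic invariant regular measure of $\overline{\varphi}_t$, that the Oseledec splitting of $\overline{\psi}^*_t$ with respect to $\mu$ is $\mathcal{N}_\Gamma=\overline{E}_1\oplus\cdots\oplus\overline{E}_k$ with $\overline{E}_i=E_{k-i+1}$ and $\overline{d}_i=d_{k-i+1}$, and that the Lyapunov exponents satisfy $\overline{\lambda}_i=-\lambda_{d-i}$ for $i=1,\dots,d-1$. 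Moreover, since $\overline{\psi}^*_t=\psi^*_{-t}$, a direct computation shows that a splitting $E\prec F$ for $\psi^*_t$ is equivalent to the reversed domination $F\prec E$ for $\overline{\psi}^*_t$; hence the hypothesis that $\mathcal{N}_\Gamma=E_1\oplus\cdots\oplus E_k$ is dominated for $\psi^*_t$ yields that $\mathcal{N}_\Gamma=\overline{E}_1\oplus\cdots\oplus\overline{E}_k$ is dominated for $\overline{\psi}^*_t$.

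First I would apply Proposition \ref{TLLE} to the vector field $-X\in\mathfrak{X}^1(M)$ and the measure $\mu$. This produces, for every $\epsilon>0$, a hyperbolic periodic point $p$ of $\overline{\varphi}_t$ such that $\lvert\overline{\lambda}_i-\overline{\lambda}_i(p)\rvert<\epsilon$ for each $i=d-\overline{d}_k,d-\overline{d}_k+1,\dots,d-1$, where $\overline{d}_k=\dim(\overline{E}_k)$ and $\overline{\lambda}_1(p)\leq\cdots\leq\overline{\lambda}_{d-1}(p)$ are the Lyapunov exponents of $\overline{\psi}^*_t$ with respect to the periodic measure $\mu_p$ supported on $\mathrm{Orb}(p)$. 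A periodic orbit of $-X$ is the same subset of $M$ as a periodic orbit of $X$, traversed in the opposite direction, so $p$ is equally a hyperbolic periodic point of $\varphi_t$; and since reversing time negates all Lyapunov exponents of the (scaled) linear Poincar\'e flow along $\mathrm{Orb}(p)$, one has $\overline{\lambda}_i(p)=-\lambda_{d-i}(p)$ for $i=1,\dots,d-1$, where $\lambda_1(p)\leq\cdots\leq\lambda_{d-1}(p)$ are the Lyapunov exponents of $\psi^*_t$ at $\mu_p$.

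It then remains only to translate indices. Since $\overline{E}_k=E_1$ we have $\overline{d}_k=d_1$, so the conclusion for $-X$ reads $\lvert\overline{\lambda}_i-\overline{\lambda}_i(p)\rvert<\epsilon$ for $i=d-d_1,\dots,d-1$. Writing $j=d-i$, so that $j$ runs over $1,\dots,d_1$, and using $\overline{\lambda}_i=-\lambda_j$ together with $\overline{\lambda}_i(p)=-\lambda_j(p)$, we obtain $\lvert\lambda_j-\lambda_j(p)\rvert=\lvert\overline{\lambda}_i-\overline{\lambda}_i(p)\rvert<\epsilon$ for every $j=1,\dots,d_1$, which is exactly the assertion of Proposition \ref{TSLE}.

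As for the main difficulty: there is essentially no new analytic content here, since all the hard work, namely Liao's shadowing lemma (Theorem \ref{Shadow}), the construction and compactness of the Pesin blocks $\Lambda^{T}_{\eta}(C)$, and the cone estimates, is already encapsulated in Proposition \ref{TLLE}. The only points that require care are the bookkeeping ones: checking that domination survives time reversal with the order of the subbundles reversed, and verifying the correspondence between the Lyapunov data of $\mu$ and $\mu_p$ for $X$ and for $-X$, including that hyperbolicity of the periodic point is symmetric under $X\leftrightarrow -X$; all of these are recorded in Subsection \ref{LEF}. Alternatively, one could reprove Proposition \ref{TSLE} directly by running the cone argument of Proposition \ref{TLLE} on the lowest Oseledec subbundle $E_1$ with $\psi^*_{-t}$ in place of $\psi^*_t$, but the duality argument above is shorter.
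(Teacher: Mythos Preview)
Your proposal is correct and follows essentially the same approach as the paper: both deduce Proposition \ref{TSLE} from the argument of Proposition \ref{TLLE} by passing to the reversed vector field $-X$, using the relations $\overline{E}_i=E_{k-i+1}$, $\overline{\lambda}_i=-\lambda_{d-i}$ recorded in Subsection \ref{LEF}, and then translating the conclusion back via the index change $j=d-i$. The only cosmetic difference is that the paper re-runs the Pesin block and shadowing construction explicitly for $-X$ before invoking ``Following the proof of Proposition \ref{TLLE},'' whereas you cite Proposition \ref{TLLE} as a black box applied to $-X$; the content is the same.
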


\begin{proof}
   For the vector field $X\in\mathfrak{X}^1(M)$, we consider the vector field $-X$. Let $\overline{\varphi}_t$ be the $C^1$ flow generated by the vector field $-X$. Correspondingly, we have the tangent flow $\overline{\Phi}_t$, the linear Poincar\'{e} flow $\overline{\psi}_t$, the extended linear Poincar\'{e} flow $\overline{\widetilde{\psi}}_t$ and the scaled linear Poincar\'{e} flow $\overline{\psi}^*_t$.
   
   Through the discussion in Section \ref{LEF}, we have that $\mu$ is an ergodic hyperbolic invariant regular measure for the flow $\overline{\varphi}_t$. Consider the scaled linear Poincar\'e flow $\overline{\psi}_t^*:\mathcal{N}\rightarrow\mathcal{N}$. By the Oseledec Theorem \cite{OV}, for $\mu$-almost every $x\in M$, there exist a positive integer $k\in[1,d-1]$, real numbers $\overline{\chi}_1<\overline{\chi}_2<\cdots<\overline{\chi}_k$ and a measurable $\overline{\psi}^*_t$-invariant splitting (for simplicity, we omit the base point)   $$\mathcal{N}=\overline{E}_1\oplus\overline{E}_2\oplus\cdots\oplus \overline{E}_k$$ of the normal bundle associated with the vector field $-X$, such that $$\lim_{t\to\pm\infty}\dfrac{1}{t}\log\left\lVert\overline{\psi}^*_t(v)\right\rVert=\overline{\chi}_i,~\forall~v\in\overline{E}_i,~v\neq 0,~i=1,2,\cdots,k.$$ Let $$\overline{d}_i={\rm dim}(\overline{E}_i),~i=1,2,\cdots,k$$ denote the multiplicities of those Lyapunov exponents. Ignoring multiplicity, we rewrite the Lyapunov exponents of the scaled linear Poincar\'{e} flow $\overline{\psi}_t^*$ with respect to $\mu$ as $\overline{\lambda}_1\leq\overline{\lambda}_2\leq\cdots\leq\overline{\lambda}_{d-1}$. In other words, $$\overline{\lambda}_j=\overline{\chi}_i,\text{ for any } \overline{d}_1+\overline{d}_2+\cdots+\overline{d}_{i-1}<j\leq \overline{d}_1+\overline{d}_2+\cdots+\overline{d}_i.$$ From the relation between the vector field $-X$ and the vector field $X$, we obtain $$\overline{\chi}_i=-\chi_{k-i+1},\quad\overline{E}_i=E_{k-i+1},\quad\overline{d}_i=\text{dim}\overline{E}_i=d_{k-i+1},\text{ for every }i=1,2,\cdots,k,$$ where $\chi_i$ $(i=1,2,\cdots,k)$ are the Lyapunov exponents of the scaled linear Poincar\'{e} flow $\psi^*_t$ with respect to $\mu$, and $E_i$ are the corresponding subspaces in its Oseledec splitting, with dimensions $d_i={\rm dim}(E_i)$. Ignoring multiplicity, let $\lambda_1\leq\lambda_2\leq\cdots\leq\lambda_{d-1}$ denote the Lyapunov exponents of the scaled linear Poincar\'{e} flow $\psi_t^*$ with respect to $\mu$. Then $$\overline{\lambda}_i=-\lambda_{d-i},~\text{ for }i=1,2,\cdots,d-1.$$ Furthermore, the Oseledec splitting for $\overline{\psi}^*_t$ inherits the dominated property from that for $\psi^*_t$. More precisely, since the splitting $$\mathcal{N}_\Gamma=E_1\oplus E_2\oplus\cdots\oplus E_s\oplus E_{s+1}\oplus\cdots\oplus E_k$$ for $\psi^*_t$ (with respect to $\mu$) is dominated, the corresponding splitting $$\mathcal{N}_\Gamma=\overline{E}_1\oplus\overline{E}_2\oplus\cdots\oplus \overline{E}_k$$ for $\overline{\psi}^*_t$ (with respect to $\mu$) is also dominated. Therefore, the hyperbolic Oseledec splitting $$\mathcal{N}_\Gamma=\overline{E}^s\oplus\overline{E}^u,\text{ where $\overline{E}^s=\overline{E}_1\oplus\overline{E}_2\oplus\cdots\oplus \overline{E}_s, \text{ and }\overline{E}^u=\overline{E}_{s+1}\oplus \overline{E}_{s+2}\oplus\cdots\oplus\overline{E}_k,$}$$ is a dominated splitting for $\overline{\psi}^*_t$ with respect to $\mu$. 
   
   Let $$\overline{\chi}=\min_{1\leq j\leq k}\{\lvert\overline{\chi}_j\rvert\}.$$ Since $\mathcal{N}_\Gamma=\overline{E}^s\oplus\overline{E}^u$ is a dominated splitting, by Lemma \ref{time}, for every $0<\epsilon/4\ll\overline{\chi}$ and every $\delta_2\in(0,1/6d)$, there exists $\overline{L}=\overline{L}(\epsilon/4,\delta_2)$ such that   
\begin{itemize}
   \item for every $T\geq\overline{L}$, there exists a measurable set $\overline{\Gamma}^T=\overline{\Gamma}^T(\epsilon/4,\delta_2)\subset\Gamma$ with $\mu\left(\overline{\Gamma}^T\right)\geq 1-\delta_2${\rm;}
	
   \item there exists a natural number $\overline{N}_1=\overline{N}(T)$ such that for every integer $n\geq\overline{N}_1$ and every $x\in\overline{\Gamma}^T$, $$\prod_{i=0}^{n-1}\left\lVert\overline{\psi}^*_T|_{\overline{E}^s\left(\overline{\varphi}_{iT}(x)\right)}\right\rVert\leq e^{-(\overline{\chi}-\epsilon/4)nT},~\prod_{i=0}^{n-1}m\left(\overline{\psi}^*_T|_{\overline{E}^u\left(\overline{\varphi}_{iT}(x)\right)}\right)\geq e^{(\overline{\chi}-\epsilon/4)nT},$$ $$\text{ and }\dfrac{\left\lVert\overline{\psi}^*_T|_{\overline{E}^s}\right\rVert}{m\left(\overline{\psi}^*_T|_{\overline{E}^u}\right)}\leq e^{-T(\overline{\chi}-\epsilon/4)}.$$ 
\end{itemize}     	
   Fix an integer $T_0\geq\overline{L}(\epsilon/4,\delta_2)$ and set $\eta_0=(\overline{\chi}-\epsilon/4)T_0$. For every $C>0$, we define the {\bf Pesin block } $\Lambda^{T_0}_{\eta_0}(C)$ for the scaled linear Poincar\'{e} flow $\overline{\psi}^*_t$ by
\begin{equation*}
\begin{aligned}
   \Lambda^{T_0}_{\eta_0}(C)=\Bigg\{x\in\Gamma:&\prod_{i=0}^{n-1}\left\lVert\overline{\psi}^*_{T_0}|_{\overline{E}^s\left(\overline{\varphi}_{iT_0}(x)\right)}\right\rVert\leq Ce^{-n\eta_0},~\forall~n\geq 1,\\ &\prod_{i=0}^{n-1}m\left(\overline{\psi}^*_{T_0}|_{\overline{E}^u\left(\overline{\varphi}_{iT_0}(x)\right)}\right)\geq C^{-1}e^{n\eta_0},~\forall~ n\geq 1,~d(x,\text{Sing}(-X))\geq\dfrac{1}{C}\Bigg\}.
\end{aligned}   
\end{equation*}
   According to \cite[Proposition 5.3]{WYZ}, $\Lambda^{T_0}_{\eta_0}(C)$ is compact and $$\mu\left(\Lambda^{T_0}_{\eta_0}(C)\right)\to\mu\left(\overline{\Gamma}^{T_0}\right)\text{ as } C\to+\infty.$$ Therefore, we can choose $C$ sufficiently large such that $\mu\left(\Lambda^{T_0}_{\eta_0}(C)\right)>1-2\delta_2>0$. For this fixed $C$, there exists a positive integer $j_0=j_0(C)$ such that $C<e^{\frac{j_0T_0\epsilon}{4}}$. Thus, for every $x\in\Lambda^{T_0}_{\eta_0}(C)$, $$\prod_{i=0}^{n-1}\left\lVert\overline{\psi}^*_{j_0T_0}|_{\overline{E}^s\left(\overline{\varphi}_{ij_0T_0}(x)\right)}\right\rVert\leq e^{-(\overline{\chi}-\epsilon/2)nj_0T_0},~\prod_{i=0}^{n-1}m\left(\overline{\psi}^*_{j_0T_0}|_{\overline{E}^u\left(\overline{\varphi}_{ij_0T_0}(x)\right)}\right)\geq e^{(\overline{\chi}-\epsilon/2)nj_0T_0},\text{ for all } n\geq 1.$$ Setting $\eta=(\overline{\chi}-\epsilon/2)j_0T_0$ and $T=j_0T_0$, we consider the set
\begin{equation*}
\begin{aligned}
   \Lambda^{T}_{\eta}(C)=\Bigg\{x\in\Gamma:&\prod_{i=0}^{n-1}\left\lVert\overline{\psi}^*_{T}|_{\overline{E}^s\left(\overline{\varphi}_{iT}(x)\right)}\right\rVert\leq e^{-n\eta},~\forall~n\geq 1,\\ &\prod_{i=0}^{n-1}m\left(\overline{\psi}^*_{T}|_{\overline{E}^u\left(\overline{\varphi}_{iT}(x)\right)}\right)\geq e^{n\eta},~\forall~ n\geq 1,~d(x,\text{Sing}(-X))\geq\dfrac{1}{C}\Bigg\}.
\end{aligned}   
\end{equation*}       

   Take $y\in\Lambda^{T}_{\eta}(C)\cap{\rm supp}(\mu)$. By Poincar\'{e} Recurrence Theorem, there exists an increasing sequence of integers  $\{l_n\}$ such that $$d(y,\overline{\varphi}_{l_nT}(y))\rightarrow 0\text{ as }n\rightarrow+\infty.$$ Choose appropriate parameters $\sigma_1,\sigma_2,\sigma_3,\sigma_4$ as in Proposition \ref{TLLE}. For $$0<\varepsilon<\min\left\{\beta^*_0,\xi,\dfrac{r_0}{K_0},\dfrac{\epsilon}{4},\dfrac{\sigma_1}{2K_0},\dfrac{\sigma_2}{K_0},\sigma_3,\log\left(1+\sigma_4\right)\right\}\text{ and }\alpha\in(0,1/C),$$ we obtain a constant $\mathcal{D}=\mathcal{D}(\varepsilon,\alpha)$ from Theorem \ref{Shadow}. For $l_n$ large enough, we obtain a $(\eta,T)$-$\overline{\psi}^*_t$-quasi-hyperbolic orbit segment $\overline{\varphi}_{[0,l_nT]}(y)$ satisfying that
\begin{itemize}
   \item $d(y,\text{Sing}(-X))>\alpha$ and $d(\overline{\varphi}_{l_nT}(y),\text{Sing}(-X))>\alpha${\rm;}
   	
   \item $y\in\Lambda^{T}_{\eta}(C)$, $\overline{\varphi}_{l_nT}(y)\in\Lambda^{T}_{\eta}(C)$ and $d(y,\overline{\varphi}_{l_nT}(y))<\mathcal{D}${\rm.}
\end{itemize}
   According to Theorem \ref{Shadow}, there exist a strictly increasing $C^1$ function $\theta:[0,l_nT]\to\mathbb{R}$ and a periodic point $p\in M$ such that
\begin{itemize}
   \item[(1)] $\theta(0)=0$ and $1-\varepsilon<\theta'(t)<1+\varepsilon$, for every $t\in[0,l_nT]$\text{;}
   	
   \item[(2)] $p$ is a periodic point with period $\theta(l_nT)$ {\rm:} $\overline{\varphi}_{\theta(l_nT)}(p)=p$\text{;}
   	
   \item[(3)] $d(\overline{\varphi}_t(y),\overline{\varphi}_{\theta(t)}(p))<\varepsilon\lvert X(\overline{\varphi}_t(y))\rvert$, for every $t\in[0,l_nT]$\text{.}
\end{itemize}    
   Moreover, by Proposition 4.4 in \cite{WYZ}, there exists a constant  $N=N(\eta,T)$ such that $$\lvert\theta(iT)-iT\rvert\leq Nd(y,\overline{\varphi}_{l_nT}(y)),\quad\text{for }i=1,2,\cdots,l_n.$$ Choose $l_n$ large enough so that $$d(y,\overline{\varphi}_{l_nT}(y))<\dfrac{\sigma_2}{2N}.$$ We shall denote $l_n$ simply by $l$ when no confusion can arise. Thus, we obtain a periodic orbit ${\rm Orb}(p)$ with period $\theta(lT)$. Then, we have a periodic measure $\mu_p$ supported on ${\rm Orb}(p)$, given by    $$\mu_p=\dfrac{1}{\theta(lT)}\int_{0}^{\theta(lT)}\delta_{\overline{\varphi}_t(p)}dt.$$
   
   Following the proof of Proposition \ref{TLLE}, we obtain $$\left\lvert\overline{\lambda}_i(p)-\overline{\lambda}_i\right\rvert<\epsilon,\text{ for each }i=d-\overline{d}_k,d-\overline{d}_k+1,\cdots,\overline{d}_k,$$ where $\overline{\lambda}_1(p)\leq\overline{\lambda}_2(p)\leq\cdots\leq\overline{\lambda}_{d-1}(p)$ are the Lyapunov exponents of the scaled linear Poincar\'{e} flow $\overline{\psi}^*_t$ with respect to the periodic invariant measure $\mu_p$. Since $$\overline{\lambda}_i=-\lambda_{d-i},~\overline{\lambda}_i(p)=-\lambda_{d-i}(p),\text{ for each } i=1,2,\cdots,d-1,$$ where $\lambda_1(p)\leq\lambda_2(p)\leq\cdots\leq\lambda_{d-1}(p)$ are the Lyapunov exponents of the linear Poincar\'{e} flow $\psi^*_t$ with respect to the periodic invariant measure $\mu_p$, we conclude that $$\lvert\lambda_i-\lambda_i(p)\rvert<\epsilon,\quad\text{for each }i=1,2,\cdots,d_1.$$ This completes the proof.    	 
\end{proof}

   To complete the proof of Theorem \ref{ThmB}, we recall some basic facts on exterior products. For more details on the Lyapunov exponents of automorphisms of exterior product spaces, we refer to \cite{A98,BV05}. Let $\varphi_t$ be the $C^1$ flow generated by a vector field $X\in\mathfrak{X}^1(M)$, and let $\mu$ be an ergodic hyperbolic invariant regular measure of $\varphi_t$. Let $\chi_1<\chi_2<\cdots<\chi_k$ denote the Lyapunov exponents of the scaled linear Poincar\'{e} flow $\psi^*_t$ with respect to $\mu$. The corresponding Oseledec splitting $$\mathcal{N}_\Gamma=E_1\oplus E_2\oplus\cdots\oplus E_k$$ is a dominated splitting for the scaled linear Poincar\'{e} flow $\psi^*_t$. 
   
   Let $\wedge^i\left(\mathbb{R}^d\right)$ denote the $i$-th exterior power of $\mathbb{R}^d$, which is a vector space of dimension $\binom{d}{i}$. A linear map $\mathcal{L}:\mathbb{R}^d\rightarrow\mathbb{R}^d$ naturally induces a linear map $\wedge^i\left(\mathcal{L}\right):\wedge^i\left(\mathbb{R}^d\right)\rightarrow\wedge^i\left(\mathbb{R}^d\right)$ defined by $$\wedge^i\left(\mathcal{L}\right)(v_1\wedge v_2\wedge\cdots\wedge v_i)=\mathcal{L}(v_1)\wedge\mathcal{L}(v_2)\wedge\cdots\wedge\mathcal{L}(v_i).$$ For a compact smooth Riemannian manifold $M$, we obtain the vector bundle $\wedge^i\left(TM\right)$ whose fiber at $x\in M$ is $\wedge^i\left(T_xM\right)$. In particular, for each $2\leq n\leq d-1$, we can construct a vector bundle $\wedge^n(\mathcal{N})$ of rank $\binom{d-1}{n}$ over the normal bundle $\mathcal{N}$. Its fiber at a non-singular point $x$ is $$\wedge^n(x)=\left\{v_{j_1}\wedge v_{j_2}\wedge\cdots\wedge v_{j_n}:~v_{j_i}\in\mathcal{N}_x,~1\leq i\leq n,~1\leq j_1<j_2<\cdots<j_n\leq d-1\right\}.$$ Let $$\wedge^n(\psi^*_t):\wedge^n(\mathcal{N})\rightarrow\wedge^n(\mathcal{N})$$ denote the $n$-exterior power of the scaled linear Poincar\'{e} flow $\psi^*_t$; that is, $$\wedge^n(\psi^*_t)(v_{j_1}\wedge v_{j_2}\wedge\cdots\wedge v_{j_n})=\psi^*_t(v_{j_1})\wedge\psi^*_t(v_{j_2})\wedge\cdots\wedge\psi^*_t(v_{j_n}).$$ The following lemma \ref{1} describes the Lyapunov exponents of this $n$-exterior power of the scaled linear Poincar\'{e} flow $\psi^*_t$.   
\begin{Lemma}{\rm\cite{A98,BV05}}\label{1}
   The Lyapunov exponents of $\wedge^n(\psi^*_t)$ at a point $x\in\Gamma$ are given by the sums $$\lambda_{j_1}+\lambda_{j_2}+\cdots+\lambda_{j_n},~~1\leq j_1<j_2<\cdots<j_n\leq d-1.$$
\end{Lemma}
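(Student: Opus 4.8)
The plan is to read this off from the Oseledec splitting $\mathcal{N}_\Gamma=E_1\oplus\cdots\oplus E_k$ of $\psi^*_t$ together with the elementary behaviour of $n$-th exterior powers under direct sums; there is no deep obstacle here, only careful bookkeeping of multiplicities. Fix $x\in\Gamma$. For a multi-index $\alpha=(\alpha_1,\dots,\alpha_k)$ with $0\le\alpha_i\le d_i$ and $|\alpha|:=\alpha_1+\cdots+\alpha_k=n$, let $F_\alpha(x)\subset\wedge^n(\mathcal{N}_x)$ be the span of all decomposable $n$-vectors $v_1\wedge\cdots\wedge v_n$ in which exactly $\alpha_i$ of the factors lie in $E_i(x)$; then $F_\alpha(x)$ is naturally identified with $\bigotimes_{i=1}^k\wedge^{\alpha_i}E_i(x)$, so $\dim F_\alpha(x)=\prod_i\binom{d_i}{\alpha_i}$. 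Choosing a basis of $\mathcal{N}_x$ adapted to the splitting and taking the induced basis of $\wedge^n(\mathcal{N}_x)$, one sees $\wedge^n(\mathcal{N}_x)=\bigoplus_{|\alpha|=n}F_\alpha(x)$, and since each $E_i$ is $\psi^*_t$-invariant, each $F_\alpha$ is $\wedge^n(\psi^*_t)$-invariant.

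The first key step is to show that on $F_\alpha$ the Lyapunov exponent is the constant $\sum_i\alpha_i\chi_i$. Under the above identification, $\wedge^n(\psi^*_t)|_{F_\alpha(x)}$ is the tensor product $\bigotimes_i\wedge^{\alpha_i}\!\big(\psi^*_t|_{E_i(x)}\big)$. For each $i$ one has the elementary bounds $m(\psi^*_t|_{E_i(x)})^{\alpha_i}\le m\big(\wedge^{\alpha_i}(\psi^*_t|_{E_i(x)})\big)\le\lVert\wedge^{\alpha_i}(\psi^*_t|_{E_i(x)})\rVert\le\lVert\psi^*_t|_{E_i(x)}\rVert^{\alpha_i}$, and since all Lyapunov exponents of $\psi^*_t$ on $E_i$ equal $\chi_i$ the Oseledec theorem gives $\tfrac1t\log m(\psi^*_t|_{E_i(x)})\to\chi_i$ and $\tfrac1t\log\lVert\psi^*_t|_{E_i(x)}\rVert\to\chi_i$ as $t\to\pm\infty$; the squeeze then yields $\tfrac1t\log\lVert\wedge^{\alpha_i}(\psi^*_t|_{E_i(x)})w\rVert\to\alpha_i\chi_i$ for every nonzero $w$. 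Because the operator norm (resp. mininorm) of a tensor product of linear maps is the product of their operator norms (resp. mininorms), it follows that every nonzero $w\in F_\alpha(x)$ satisfies $\tfrac1t\log\lVert\wedge^n(\psi^*_t)(w)\rVert\to\sum_i\alpha_i\chi_i$ as $t\to\pm\infty$.

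It remains to assemble the pieces. Grouping together those $F_\alpha(x)$ for which the value $\sum_i\alpha_i\chi_i$ coincides, we obtain a $\wedge^n(\psi^*_t)$-invariant splitting of $\wedge^n(\mathcal{N}_x)$ on which the exponent is constant on each summand; since $x$ is already Oseledec-regular for $\psi^*_t$, this is precisely the Oseledec decomposition of $\wedge^n(\psi^*_t)$ at $x$, with exponents the numbers $\sum_i\alpha_i\chi_i$, the one attached to $\alpha$ carried with multiplicity $\dim F_\alpha(x)=\prod_i\binom{d_i}{\alpha_i}$. Finally I would translate this into the list $\lambda_1\le\cdots\le\lambda_{d-1}$: an $n$-element subset $\{j_1<\cdots<j_n\}\subset\{1,\dots,d-1\}$ selects exactly $\alpha_i$ of its indices from the block on which $\lambda_\bullet\equiv\chi_i$, and for a fixed $\alpha$ there are precisely $\prod_i\binom{d_i}{\alpha_i}$ such subsets, so letting $\alpha$ range over $|\alpha|=n$ exhausts all $\binom{d-1}{n}$ subsets. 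Hence the multiset $\{\lambda_{j_1}+\cdots+\lambda_{j_n}:1\le j_1<\cdots<j_n\le d-1\}$ coincides, with multiplicities, with the multiset of Lyapunov exponents of $\wedge^n(\psi^*_t)$ at $x$, which is the assertion. The only point requiring care is the uniformity within each Oseledec block---that the norm, the mininorm, and all exterior powers of $\psi^*_t|_{E_i}$ share the exponential rate governed by $\chi_i$---and this is exactly what the squeeze above, resting on the Oseledec theorem, delivers.
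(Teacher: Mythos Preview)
The paper does not prove this lemma at all; it is stated with a citation to Arnold's book \cite{A98} and Bochi--Viana \cite{BV05} and used as a black box. Your proposal therefore supplies a complete proof where the paper merely quotes the result.

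Your argument is correct and is essentially the standard one found in those references: decompose $\wedge^n(\mathcal{N}_x)=\bigoplus_{|\alpha|=n}F_\alpha(x)$ by multi-indices, identify the action on each $F_\alpha$ with the tensor product $\bigotimes_i\wedge^{\alpha_i}(\psi^*_t|_{E_i})$, and squeeze each factor between $m(\psi^*_t|_{E_i})^{\alpha_i}$ and $\lVert\psi^*_t|_{E_i}\rVert^{\alpha_i}$, both of which have exponential rate $\alpha_i\chi_i$ by Oseledec regularity. The combinatorial translation from the multiset $\{\sum_i\alpha_i\chi_i\}$ with multiplicities $\prod_i\binom{d_i}{\alpha_i}$ to the multiset $\{\lambda_{j_1}+\cdots+\lambda_{j_n}\}$ is handled cleanly. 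One point you leave implicit is worth making explicit: the norm on $F_\alpha$ coming from the tensor-product identification differs from the restriction of the ambient norm on $\wedge^n(\mathcal{N}_x)$ by a factor depending on the angles between the $E_i(\varphi_t(x))$; for Oseledec-regular $x$ these angles have subexponential decay along the orbit, and in the present paper the domination hypothesis even bounds them uniformly away from zero, so this discrepancy does not affect the exponents. With that noted, nothing is missing.
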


   Let $\left\{e_1(x),e_2(x),\cdots,e_{d-1}(x)\right\}$ be a basis of $\mathcal{N}_x$ with the property that $$\text{dim}E_1(x)+\cdots+\text{dim}E_{j-1}(x)<i\leq \text{dim}E_1(x)+\cdots+\text{dim}E_j(x)\Rightarrow e_i(x)\in E_j(x).$$ The Oseledec subspace $\widehat{E}^{\wedge^n}_j$ of the $n$-exterior power of the scaled linear Poincar\'{e} flow $\wedge^n(\psi^*_t)$, corresponding to the Lyapunov exponent $\widehat{\lambda}_j=\lambda_{j_1}+\lambda_{j_2}+\cdots+\lambda_{j_n}$, is the subspace of $\wedge^n(\mathcal{N})$ spanned by the vectors $$e_{j_1}\wedge e_{j_2}\wedge\cdots\wedge e_{j_n},~1\leq j_1<j_2<\cdots<j_n\leq d-1.$$ Since the Oseledec splitting $$\mathcal{N}_\Gamma=E_1\oplus E_2\oplus\cdots\oplus E_k$$ is dominated, we can choose the basis $\left\{e_1(x),e_2(x),\cdots,e_{d-1}(x)\right\}$ of $\mathcal{N}_x$ so that for some $\gamma>0$, $$\measuredangle(e_{j_1}(x), e_{j_2}(x))>\gamma,\text{ for all $1\leq j_1\neq j_2\leq d-1$ and all $x\in\Gamma$}.$$ Identifying $v_{j_1}\wedge v_{j_2}\wedge\cdots\wedge v_{j_n}$ with the $n$-volume of the parallelepiped spanned by the vectors $v_{j_1}, v_{j_2},\cdots,v_{j_n}$ gives the standard norm on $\wedge^n(\mathcal{N})$. For computational purposes, however, it is convenient to introduce an equivalent norm  $\lVert\cdot\rVert_{\wedge^n}$ defined by $$\lVert v\rVert_{\wedge^n}=\max_{1\leq i\leq \binom{d-1}{n}}\left\{\lvert v_i\rvert\right\},\qquad v=\sum_{i=1}^{\binom{d-1}{n}}v_ie^{\wedge^n}_i,$$ where $\left\{e^{\wedge^n}_i\right\}_{i=1}^{\binom{d-1}{n}}$ is the basis of $\wedge^n(\mathcal{N})$ induced by $\left\{e_1(x),e_2(x),\cdots,e_{d-1}(x)\right\}$. 
   
   For each $n\in\left\{d_k+1,d_k+d_{k-1}+1,\cdots,d_k+\cdots+d_{s+2}+1\right\}$, let $\widehat{E}^{\wedge^n}_t$ be the Oseledec subspace corresponding to the largest Lyapunov exponents of $\wedge^n(\psi^*_t)$. Then, at a point $x\in M\setminus{\rm Sing}(X)$, the Oseledec splitting of $\wedge^n(\psi^*_t)$ is $$\widehat{E}^{\wedge^n}_1(x)\oplus\widehat{E}^{\wedge^n}_2(x)\oplus\cdots\oplus\widehat{E}^{\wedge^n}_{t-1}(x)\oplus\widehat{E}^{\wedge^n}_t(x).$$ From the definitions of dominated splitting and of the $n$-exterior power of the scaled linear Poincar\'{e} flow $\psi^*_t$, the next lemma \ref{2} follows directly; we therefore omit its proof. 
\begin{Lemma}\label{2}
   If the Oseledec splitting $$\mathcal{N}_\Gamma=E_1\oplus E_2\oplus\cdots\oplus E_k$$ of the scaled linear Poincar\'{e} flow $\psi^*_t$ with respect to an ergodic hyperbolic invariant regular measure $\mu$ is dominated, then
\begin{enumerate}
   \item[{\rm (1)}] for each $n\in\left\{d_k+1,d_k+d_{k-1}+1,\cdots,d_k+\cdots+d_{s+2}+1\right\}$, the Oseledec splitting $$\widehat{E}^{\wedge^n}_1\oplus\widehat{E}^{\wedge^n}_2\oplus\cdots\oplus\widehat{E}^{\wedge^n}_{t-1}\oplus\widehat{E}^{\wedge^n}_t$$ of $\wedge^n(\psi^*_t)$ is dominated and satisfies $$\left(\widehat{E}^{\wedge^n}_1\oplus\widehat{E}^{\wedge^n}_2\oplus\cdots\oplus\widehat{E}^{\wedge^n}_{t-1}\right)\prec\widehat{E}^{\wedge^n}_t\text{;}$$
   
   \item[{\rm (2)}] for each $n\in\left\{d_1+1,d_1+d_2+1,\cdots,d_1+\cdots+d_{s-1}+1\right\}$, the Oseledec splitting $$\widehat{E}^{\wedge^n}_1\oplus\widehat{E}^{\wedge^n}_2\oplus\cdots\oplus\widehat{E}^{\wedge^n}_{t-1}\oplus\widehat{E}^{\wedge^n}_t$$ of $\wedge^n(\psi^*_t)$ is dominated and satisfies $$\widehat{E}^{\wedge^n}_1\prec\left(\widehat{E}^{\wedge^n}_2\oplus\cdots\oplus\widehat{E}^{\wedge^n}_{t-1}\oplus\widehat{E}^{\wedge^n}_t\right)\text{.}$$       
\end{enumerate}   	
\end{Lemma}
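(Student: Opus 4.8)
The plan is to read off both assertions from Lemma \ref{1} together with the product structure that the Oseledec splitting $\mathcal N_\Gamma=E_1\oplus\cdots\oplus E_k$ of $\psi^*_t$ induces on the exterior power. For a multi-index $\alpha=(\alpha_1,\dots,\alpha_k)$ with $0\le\alpha_j\le d_j$ and $\alpha_1+\cdots+\alpha_k=n$, let $W_\alpha\subset\wedge^n(\mathcal N)$ be spanned by the basis wedges $e_{j_1}\wedge\cdots\wedge e_{j_n}$ having exactly $\alpha_j$ of their indices in the block $E_j$. Then $\wedge^n(\mathcal N)=\bigoplus_\alpha W_\alpha$ is a $\wedge^n(\psi^*_t)$-invariant decomposition, $W_\alpha$ is canonically isomorphic to $\bigotimes_{j=1}^k\wedge^{\alpha_j}(E_j)$, and under this identification $\wedge^n(\psi^*_t)|_{W_\alpha}=\bigotimes_j\wedge^{\alpha_j}(\psi^*_t|_{E_j})$; in particular the Lyapunov exponent of $\wedge^n(\psi^*_t)$ along $W_\alpha$ equals $\sum_j\alpha_j\chi_j$, consistently with Lemma \ref{1}, and each Oseledec subspace $\widehat E^{\wedge^n}_i$ is the sum of those $W_\alpha$ realising the $i$-th value of $\alpha\mapsto\sum_j\alpha_j\chi_j$.

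For (1) I would fix $n=1+d_k+d_{k-1}+\cdots+d_{m+1}$ with $s+1\le m\le k-1$. Because $\chi_1<\cdots<\chi_k$, the functional $\alpha\mapsto\sum_j\alpha_j\chi_j$ attains its maximum over admissible profiles \emph{uniquely} at the greedy profile $\alpha^*=(0,\dots,0,1,d_{m+1},\dots,d_k)$: any other admissible $\alpha$ is obtained from $\alpha^*$ by transferring mass from higher blocks to strictly lower ones, and such a transfer strictly decreases the functional. Hence the top Oseledec subspace is the single block $\widehat E^{\wedge^n}_t=W_{\alpha^*}\cong E_m\otimes\bigotimes_{j>m}\wedge^{d_j}(E_j)$, while $\widehat E^{\wedge^n}_1\oplus\cdots\oplus\widehat E^{\wedge^n}_{t-1}=\bigoplus_{\alpha\ne\alpha^*}W_\alpha$; this is the announced splitting-off of the top subspace.

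It then remains to verify the domination $\big(\widehat E^{\wedge^n}_1\oplus\cdots\oplus\widehat E^{\wedge^n}_{t-1}\big)\prec\widehat E^{\wedge^n}_t$. Working with the equivalent norm $\|\cdot\|_{\wedge^n}$, which makes $\bigoplus_\alpha W_\alpha$ behave orthogonally up to a factor depending only on the angle bound $\gamma$, one has $\|\wedge^n(\psi^*_t)|_{\widehat E^{\wedge^n}_1\oplus\cdots\oplus\widehat E^{\wedge^n}_{t-1}}\|\le c\max_{\alpha\ne\alpha^*}\|\wedge^n(\psi^*_t)|_{W_\alpha}\|$. Each factor $\wedge^{\alpha_j}(\psi^*_t|_{E_j})$ is estimated by products of singular values of $\psi^*_t|_{E_j}$; when $\alpha_j=d_j$ this exterior power is the determinant $\det(\psi^*_t|_{E_j})$, whose norm and co-norm coincide. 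Comparing $\|\wedge^n(\psi^*_t)|_{W_\alpha}\|$ with $m\big(\wedge^n(\psi^*_t)|_{W_{\alpha^*}}\big)$, the determinant factors over the blocks $j>m$ cancel, and the quotient reduces to a product of at most $n$ ratios of the form $\|\psi^*_t|_{E_a}\|/m(\psi^*_t|_{E_b})$ with $a<b$: since $\alpha^*$ packs the high blocks maximally, in any sub-optimal profile every ``norm'' factor can be matched with a ``co-norm'' factor living in a strictly higher block. Each such ratio is $\le Ce^{-\lambda t}$ by iterating the domination $E_a\prec E_b$ of the original splitting over the at most $d$ intermediate blocks, so the whole quotient is $\le C^{\,d}e^{-\lambda t}$, which is the required estimate.

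Assertion (2) I would obtain by applying (1) to the reversed vector field $-X$: by Section \ref{LEF} the Oseledec data of $\overline{\psi}^*_t$ satisfy $\overline{\chi}_i=-\chi_{k-i+1}$, $\overline{E}_i=E_{k-i+1}$, $\overline{d}_i=d_{k-i+1}$, the dominated property is inherited, the ranges of $n$ in (1) and (2) correspond under the order-reversal $j\leftrightarrow k-j+1$, and $\wedge^n(\overline{\psi}^*_t)=\wedge^n(\psi^*_{-t})$; hence the ``top splits off and dominates'' statement of (1) for $-X$ translates into the ``bottom splits off and is dominated'' statement $\widehat E^{\wedge^n}_1\prec\big(\widehat E^{\wedge^n}_2\oplus\cdots\oplus\widehat E^{\wedge^n}_t\big)$ for $X$. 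The one place that needs genuine care — and the main obstacle — is the pairing step inside the domination estimate: one must check that for \emph{every} sub-optimal profile each norm-factor can indeed be matched with a strictly-higher-block co-norm factor, so that only the domination of consecutive blocks of the original Oseledec splitting is ever invoked and no uncontrolled within-block distortion survives; carrying out this pairing (a short greedy, Hall-type argument on profiles) and bookkeeping the finitely many domination constants is essentially the whole proof.
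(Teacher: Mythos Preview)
Your proposal is correct and, in fact, provides substantially more detail than the paper itself: the paper simply states that Lemma~\ref{2} ``follows directly'' from the definitions of dominated splitting and of the $n$-th exterior power of $\psi^*_t$, and omits the proof entirely. Your block decomposition $\wedge^n(\mathcal N)=\bigoplus_\alpha W_\alpha$, the identification of the extremal profile $\alpha^*$, and the reduction to the domination of consecutive bundles $E_a\prec E_b$ via a pairing argument constitute exactly the kind of verification the paper is leaving to the reader; the derivation of~(2) from~(1) by passing to $-X$ is also in the spirit of the paper's repeated use of this symmetry (as in Proposition~\ref{TSLE}).
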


   To complete the proof of Theorem \ref{ThmB}, we determine the Lyapunov exponents of $\wedge^n(\psi^*_t)$ by applying Propositions \ref{ITLLE} and \ref{ITSLE} below.
\begin{Proposition}\label{ITLLE}{\rm (The approximation of largest Lyapunov exponents)} 
   For $s+2\leq i\leq k$, set $n=n(i)=d_i+\cdots+d_k+1$. Then the flow $\wedge^n(\psi^*_t)$ admits $d_{i-1}$ largest Lyapunov exponents with respect to the ergodic hyperbolic invariant regular measure $\mu$. For every $\epsilon>0$, there exists a hyperbolic periodic point $p$ of $\varphi_t$ such that $$\left\lvert\widehat{\lambda}_j-\widehat{\lambda}_j(p)\right\rvert<\epsilon,\text{ for every }j=\tbinom{d-1}{n}-d_{i-1}+1,~\tbinom{d-1}{n}-d_{i-1}+2,\cdots,~\tbinom{d-1}{n},$$ where $\widehat{\lambda}_1(p)\leq\widehat{\lambda}_2(p)\leq\cdots\leq\widehat{\lambda}_{\tbinom{d-1}{n}}(p)$ are the Lyapunov exponents of $\wedge^n(\psi^*_t)$ with respect to the periodic measure $\mu_p$ supported on the periodic orbit ${\rm Orb}(p)$, and $\widehat{\lambda}_1\leq\widehat{\lambda}_2\leq\cdots\leq\widehat{\lambda}_{\tbinom{d-1}{n}}$ are the Lyapunov exponents of $\wedge^n(\psi^*_t)$ with respect to the ergodic hyperbolic invariant regular measure $\mu$.     	
\end{Proposition}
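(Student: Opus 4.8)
The plan is to run, almost verbatim, the argument of Proposition~\ref{TLLE}, but with the flow $\wedge^n(\psi^*_t)$ in place of $\psi^*_t$. The structural input that makes this possible is Lemma~\ref{2}(1): since $n=n(i)=d_i+\cdots+d_k+1$ belongs to $\left\{d_k+1,d_k+d_{k-1}+1,\cdots,d_k+\cdots+d_{s+2}+1\right\}$, the Oseledec splitting $\widehat{E}^{\wedge^n}_1\oplus\cdots\oplus\widehat{E}^{\wedge^n}_t$ of $\wedge^n(\psi^*_t)$ with respect to $\mu$ is dominated and satisfies $\left(\widehat{E}^{\wedge^n}_1\oplus\cdots\oplus\widehat{E}^{\wedge^n}_{t-1}\right)\prec\widehat{E}^{\wedge^n}_t$; moreover, by Lemma~\ref{1} the top bundle $\widehat{E}^{\wedge^n}_t$ has dimension exactly $d_{i-1}$ (one is forced to pick all indices of $E_i,\dots,E_k$ and exactly one of the $d_{i-1}$ indices of $E_{i-1}$) and carries the largest Lyapunov exponent $\widehat{\lambda}_{\binom{d-1}{n}}$ of $\wedge^n(\psi^*_t)$, which is therefore repeated $d_{i-1}$ times. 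Thus $\widehat{E}^{\wedge^n}_t$ will play the role that $E_k$ plays in Proposition~\ref{TLLE}.

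First I would produce the periodic point. The shadowing step is left unchanged: one builds, exactly as in the proof of Proposition~\ref{TLLE}, the Pesin block $\Lambda^{T}_{\eta}(C)$ for the dominated splitting $\mathcal{N}_\Gamma=E^s\oplus E^u$ of $\psi^*_t$ (this is what Theorem~\ref{Shadow} requires), picks a point $y$ in $\Lambda^T_\eta(C)\cap{\rm supp}(\mu)$ which also lies in the Lyapunov-regular set $\Gamma^T$ furnished by Lemma~\ref{time} applied to $\wedge^n(\psi^*_t)$ --- the proof of Lemma~\ref{time} uses only the Oseledec and Birkhoff theorems together with the continuity of a dominated splitting, so it applies verbatim to $\wedge^n(\psi^*_t)$ by Lemma~\ref{2}(1) --- and which satisfies the Birkhoff estimate $\tfrac{1}{lT}\sum_{i=0}^{l-1}\log\lVert\wedge^n(\psi^*_T)|_{\mathcal{N}_{\varphi_{iT}(y)}}\rVert\le\widehat{\lambda}_{\binom{d-1}{n}}+\epsilon/2$ for $l$ large. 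Poincar\'e recurrence gives $l=l_n$ with $d(y,\varphi_{lT}(y))$ arbitrarily small, and Theorem~\ref{Shadow} applied to the $(\eta,T)$-$\psi^*_t$-quasi-hyperbolic orbit segment $\varphi_{[0,lT]}(y)$ yields a periodic point $p$ of $\varphi_t$, a reparametrization $\theta$ with $1-\varepsilon<\theta'<1+\varepsilon$, and, via Proposition~4.4 of \cite{WYZ}, $\lvert\theta(iT)-iT\rvert\le Nd(y,\varphi_{lT}(y))$ for $i=0,\dots,l$; shrinking $d(y,\varphi_{lT}(y))$ we arrange $\theta((i+1)T)-\theta(iT)\in(T-\sigma_2,T+\sigma_2)$.

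Next I would transfer the estimate to the exterior power. With $T_i(y,p)$ the transfer operators of Proposition~\ref{TLLE}, choosing the shadowing parameter $\varepsilon$ small enough gives $\lvert\log\lVert T_i(y,p)\rVert\rvert<\epsilon/(2n)$; hence the operators $\widehat{T}_i(y,p)=\wedge^n\bigl(T_i(y,p)\bigr)$ satisfy $\lvert\log\lVert\widehat{T}_i(y,p)\rVert_{\wedge^n}\rvert<\epsilon/2$ and, shrinking $\varepsilon$ again, $\lVert\widehat{T}_i(y,p)-{\rm I}\rVert\le\widehat{\sigma}_4$ for the cone constant attached to $\widehat{E}^{\wedge^n}_t$. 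Writing $\wedge^n(\psi^*_{\theta(lT)})|_{\mathcal{N}_p}$ as the composition of the blocks $\wedge^n(\psi^*_T)|_{\mathcal{N}_{\varphi_{iT}(y)}}$ interlaced with the $\widehat{T}_i(y,p)$ (exactly as in Proposition~\ref{TLLE}), the submultiplicative norm estimate together with the above bounds on the $\widehat{T}_i$ yields the easy inequality $\widehat{\lambda}_{\binom{d-1}{n}}(p)\le\widehat{\lambda}_{\binom{d-1}{n}}+\epsilon$; and the cone argument --- fix a cone $\widehat{\mathcal{C}}_\rho$ around $\widehat{E}^{\wedge^n}_t$, use that $\wedge^n(\psi^*_T)$ sends $\widehat{\mathcal{C}}_{\gamma\rho}$ into $\widehat{\mathcal{C}}_\rho$ by Lemma~\ref{2}(1) while each $\widehat{T}_i$ sends $\widehat{\mathcal{C}}_\rho$ back into $\widehat{\mathcal{C}}_{\gamma\rho}$, then expand vectors lying in the $d_{i-1}$-dimensional cone $\widehat{\mathcal{C}}_\rho(p)$, bounding the expansion from below by $\prod m\bigl(\wedge^n(\psi^*_T)|_{\widehat{E}^{\wedge^n}_t(\varphi_{iT}(y))}\bigr)$ and using Lemma~\ref{time} --- shows that on a $d_{i-1}$-dimensional subspace of $\mathcal{N}_p$ the map $\wedge^n(\psi^*_{\theta(lT)})|_{\mathcal{N}_p}$ expands at rate at least $e^{(\widehat{\lambda}_{\binom{d-1}{n}}-\epsilon)\theta(lT)}$. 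Hence at least $d_{i-1}$ Lyapunov exponents of $\mu_p$ exceed $\widehat{\lambda}_{\binom{d-1}{n}}-\epsilon$; since the top $d_{i-1}$ exponents $\widehat{\lambda}_j$ of $\mu$ all equal $\widehat{\lambda}_{\binom{d-1}{n}}$, listing the exponents of $\mu_p$ in ascending order and combining the two bounds gives $\lvert\widehat{\lambda}_j-\widehat{\lambda}_j(p)\rvert<\epsilon$ for $j=\binom{d-1}{n}-d_{i-1}+1,\cdots,\binom{d-1}{n}$.

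I expect the main obstacle to be purely one of uniform bookkeeping: the cone width, the shadowing parameter $\varepsilon$, the block constant $C$, the chart radius, and the constants $\sigma_1,\dots,\sigma_4$ must be fixed so that they simultaneously control $\psi^*_t$ (to run Theorem~\ref{Shadow}) and $\wedge^n(\psi^*_t)$ (for the cone estimate). Because $\wedge^n$ is a smooth functorial operation and the domination needed on the exterior bundle is exactly what Lemma~\ref{2}(1) supplies, no genuinely new phenomenon appears; the only subtlety is that the single periodic orbit ${\rm Orb}(p)$ produced by the shadowing lemma must serve all the estimates at once, which is automatic since $\wedge^n(\psi^*_t)$ is canonically determined by $\varphi_t$.
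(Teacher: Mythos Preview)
Your proposal is correct and follows essentially the same route as the paper: identify the top Oseledec bundle of $\wedge^n(\psi^*_t)$ via Lemma~\ref{1}, use Lemma~\ref{2}(1) to get the required domination on the exterior bundle, then rerun the proof of Proposition~\ref{TLLE} with the cone argument transported to $\wedge^n(\psi^*_t)$. The paper's own proof is in fact more compressed than yours---it just says ``Proceeding as in the proof of Proposition~\ref{TLLE}\dots''---so you have spelled out more of the bookkeeping.

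One small point worth flagging: the paper sets up its Pesin block using the intermediate splitting $E^{s,i}\oplus E^{u,i}$ with $E^{s,i}=E_1\oplus\cdots\oplus E_{i-1}$, whereas you (correctly) use the genuine stable/unstable splitting $E^s\oplus E^u$ for the quasi-hyperbolicity needed in Theorem~\ref{Shadow}, and invoke the $E^{s,i}$-type domination only through Lemma~\ref{2}(1) for the cone estimate on the exterior power. Since $E^{s,i}$ contains unstable sub-bundles when $i\ge s+2$, your separation of roles is the cleaner way to organize the argument; the paper's formulation appears to be a slightly careless copy from Proposition~\ref{TLLE}, but the intended content is the same as what you wrote.
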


\begin{proof}
   For each integer $n\in[2,d-1]$, Lemma \ref{1} implies that the Lyapunov exponents of $\wedge^n(\psi^*_t)$ are all sums of the form $$\lambda_{j_1}+\lambda_{j_2}+\cdots+\lambda_{j_n},~~1\leq j_1<j_2<\cdots<j_n\leq d.$$ Consider the Oseledec splitting $\mathcal{N}_\Gamma=E_1\oplus E_2\oplus\cdots\oplus E_s\oplus E_{s+1}\oplus\cdots\oplus E_k$ of the scaled linear Poincar\'{e} flow $\psi^*_t$ with respect to the ergodic hyperbolic invariant measure $\mu$. The corresponding Lyapunov exponents satisfy $0<\chi_{s+1}<\chi_{s+2}<\cdots<\chi_k$; in particular, $$\lambda_j>0,\quad\text{ for every } j\geq d_1+d_2+\cdots+d_s+1.$$ Hence, for a given $n$, the largest Lyapunov exponent of $\wedge^n(\psi^*_t)$ is obtained by summing the Lyapunov exponents of the scaled linear Poincar\'{e} flow $\psi^*_t$ in reverse order, subject to the dimensional constraints imposed by the exterior power. Fix $i$ with $s+2\leq i\leq k$ and set $n=n(i)=d_i+\cdots+d_k+1$. The largest Lyapunov exponent of $\wedge^n(\psi^*_t)$ equals $$\lambda_{d-1}+\lambda_{d-2}+\cdots+\lambda_{d-n+1}+\chi_{i-1}.$$ Its associated Oseledec subspace is spanned by vectors of the form $e_{d-1}\wedge e_{d-2}\wedge\cdots\wedge e_{d-n+1}\wedge\tilde{e}$, where $\tilde{e}$ belongs to the set $\{e_j:e_j\subset E_{i-1}\}$. Because there are exactly $d_{i-1}=\text{dim}E_{i-1}$ independent choices for $\tilde{e}$, the multiplicity of this largest exponent is $d_{i-1}$. Consequently, $\wedge^n(\psi^*_t)$ possesses $d_{i-1}$ largest Lyapunov exponents. 
 
   Given $s+2\leq i\leq k$, we have a dominated splitting $$\mathcal{N}_\Gamma=\left(E_1\oplus E_2\oplus\cdots\oplus E_{i-1}\right)\prec\left(E_i\oplus\cdots\oplus E_k\right).$$ For brevity, we rewrite the dominated splitting as $$\mathcal{N}_\Gamma=E^{s,i}\oplus E^{u,i},\text{ where }E^{s,i}=E_1\oplus E_2\oplus\cdots\oplus E_{i-1},~E^{u,i}=E_i\oplus\cdots\oplus E_k.$$ Let $$\chi=\min_{1\leq j\leq k}\{\lvert\chi_j\rvert\}.$$ Since the Oseledec splitting $\mathcal{N}_\Gamma=E^s\oplus E^u$ is dominated, Lemma \ref{time} implies that for every $0<\epsilon/4\ll\chi$ and every $\delta\in(0,1/6d)$, there exists a positive number $L_i=L(\epsilon/4,\delta)$ such that
\begin{itemize}
   \item for every $T\geq L_i$, there exists a measurable set $\Gamma^{T,i}=\Gamma^{T,i}(\epsilon/4,\delta)\subset\Gamma$ with $\mu\left(\Gamma^{T,i}\right)\geq 1-\delta$;
 	
   \item there exists a natural number $N=N(T)$ such that for every integer $J\geq N$ and every $x\in\Gamma^{T,i}$, $$\prod_{j=0}^{J-1}\left\lVert\psi^*_T|_{E^{s,i}(\varphi_{jT}(x))}\right\rVert\leq e^{-(\chi-\epsilon/4)JT},~\prod_{j=0}^{J-1}m\left(\psi^*_T|_{E^{u,i}(\varphi_{jT}(x))}\right)\geq e^{(\chi-\epsilon/4)JT},$$ $$\text{ and }\dfrac{\left\lVert\psi^*_T|_{E^{s,i}}\right\rVert}{m\left(\psi^*_T|_{E^{u,i}}\right)}\leq e^{-T(\chi-\epsilon/4)}.$$     	  
\end{itemize}
   Fix an integer $T_0\geq L(\epsilon/4,\delta)$ and set $\eta_0=(\chi-\epsilon/4)T_0$. For every $C>0$, we define the {\bf Pesin block } $\Lambda^{T_0}_{\eta_0}(C)$ by
\begin{equation*}
\begin{aligned}
   \Lambda^{T_0}_{\eta_0}(C)=\Bigg\{x\in\Gamma:&\prod_{j=0}^{J-1}\left\lVert\psi^*_{T_0}|_{E^{s,i}\left(\varphi_{jT_0}(x)\right)}\right\rVert\leq Ce^{-J\eta_0},~\forall~J\geq 1,\\ &\prod_{j=0}^{J-1}m\left(\psi^*_{T_0}|_{E^{u,i}\left(\varphi_{jT_0}(x)\right)}\right)\geq C^{-1}e^{J\eta_0},~\forall~J\geq 1,~d(x,\text{Sing}(X))\geq\dfrac{1}{C}\Bigg\}.
\end{aligned}   
\end{equation*}
   According to \cite[Proposition 5.3]{WYZ}, $\Lambda^{T_0}_{\eta_0}(C)$ is compact and $$\mu\left(\Lambda^{T_0}_{\eta_0}(C)\right)\to\mu\left(\Gamma^{T_0,i}\right)\text{ as }C\to+\infty.$$ Hence we can choose a sufficiently large integer $C$ such that $\mu\left(\Lambda^{T_0}_{\eta_0}(C)\right)>1-2\delta>0$. For this fixed $C$, there exists a positive integer $j_0=j_0(C)$ satisfying $C<e^{\frac{j_0T_0\epsilon}{4}}$. Then for every $x\in\Lambda^{T_0}_{\eta_0}(C)$ and every $J\geq 1$, $$\prod_{j=0}^{J-1}\left\lVert\psi^*_{j_0T_0}|_{E^{s,i}\left(\varphi_{jj_0T_0}(x)\right)}\right\rVert\leq e^{-(\chi-\epsilon/2)Jj_0T_0},~\prod_{j=0}^{J-1}m\left(\psi^*_{j_0T_0}|_{E^{u,i}\left(\varphi_{jj_0T_0}(x)\right)}\right)\geq e^{(\chi-\epsilon/2)Jj_0T_0}.$$ Setting $T=j_0T_0$ and $\eta=(\chi-\epsilon/2)j_0T_0$, we consider the set
\begin{equation*}
\begin{aligned}
   \Lambda^{T}_{\eta}(C)=\Bigg\{x\in\Gamma:&\prod_{j=0}^{J-1}\left\lVert\psi^*_{T}|_{E^{s,i}\left(\varphi_{jT}(x)\right)}\right\rVert\leq e^{-J\eta},~\forall~J\geq 1,\\ &\prod_{j=0}^{J-1}m\left(\psi^*_{T}|_{E^{u,i}\left(\varphi_{jT}(x)\right)}\right)\geq e^{J\eta},~\forall~J\geq 1,~d(x,\text{Sing}(X))\geq\dfrac{1}{C}\Bigg\}.
\end{aligned}   
\end{equation*}	   
   
   Take $y\in\Lambda^{T}_{\eta}(C)\cap\text{supp}(\mu)$. By the Poincar\'{e} Recurrence Theorem, there exists an increasing sequence of integers $\{l_j\}$ such that $d(y,\varphi_{l_jT}(y))\rightarrow 0\text{ as } j\rightarrow+\infty$. Proceeding as in the proof of Proposition \ref{TLLE} and using the definition of $\wedge^n(\psi^*_t)$, Theorem \ref{Shadow} and Lemma \ref{2}, we can select suitable parameters $\varepsilon_i>0$, $\alpha\in(0,1/6d)$ and an index $l_{j(i)}$ to obtain a periodic point $p$ satisfying $$\left\lvert\widehat{\lambda}_j-\widehat{\lambda}_j(p)\right\rvert<\epsilon,\text{ for every }j=\tbinom{d-1}{n}-d_{i-1}+1,\tbinom{d-1}{n}-d_{i-1}+2,\cdots,\tbinom{d-1}{n},$$ where $\widehat{\lambda}_1(p)\leq\widehat{\lambda}_2(p)\leq\cdots\leq\widehat{\lambda}_{\binom{d-1}{n}}(p)$ are the Lyapunov exponents of $\wedge^n(\psi^*_t)$ with respect to the periodic measure $\mu_p$ supported on the periodic orbit ${\rm Orb}(p)$, and $\widehat{\lambda}_1\leq\widehat{\lambda}_2\leq\cdots\leq\widehat{\lambda}_{\binom{d-1}{n}}$ are the Lyapunov exponents of $\wedge^n(\psi^*_t)$ with respect to the ergodic hyperbolic invariant regular measure $\mu$.     	 
\end{proof}

\begin{Proposition}\label{ITSLE}{\rm (The approximation of smallest Lyapunov exponents)}
   For $1\leq i\leq s-1$, set $n=n(i)=d_1+\cdots+d_i+1$. Then the flow $\wedge^n(\psi^*_t)$ admits $d_{i+1}$ smallest Lyapunov exponents with respect to the ergodic hyperbolic invariant regular measure $\mu$. For every $\epsilon>0$, there exists a hyperbolic periodic point $p$ of $\varphi_t$ such that $$\left\lvert\widehat{\lambda}_j-\widehat{\lambda}_j(p)\right\rvert<\epsilon,\text{ for every }j=1,2,\cdots,d_{i+1},$$ where $\widehat{\lambda}_1(p)\leq\widehat{\lambda}_2(p)\leq\cdots\leq\widehat{\lambda}_{\binom{d-1}{n}}(p)$ are the Lyapunov exponents of $\wedge^n(\psi^*_t)$ with respect to the periodic measure $\mu_p$ supported on ${\rm Orb}(p)$, $\widehat{\lambda}_1\leq\widehat{\lambda}_2\leq\cdots\leq\widehat{\lambda}_{\binom{d-1}{n}}$ are the Lyapunov exponents of $\wedge^n(\psi^*_t)$ with respect to the ergodic hyperbolic invariant regular measure $\mu$.     	    	
\end{Proposition}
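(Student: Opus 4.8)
The plan is to deduce the proposition from Proposition~\ref{ITLLE} by passing to the reversed vector field $-X$, in exact analogy with the way Proposition~\ref{TSLE} was obtained from Proposition~\ref{TLLE}. First one records the structure of the bottom of the spectrum of $\wedge^n(\psi^*_t)$. By Lemma~\ref{1} its Lyapunov exponents with respect to $\mu$ are the sums $\lambda_{j_1}+\cdots+\lambda_{j_n}$ with $1\le j_1<\cdots<j_n\le d-1$; for $n=n(i)=d_1+\cdots+d_i+1$ the smallest of these equals $d_1\chi_1+\cdots+d_i\chi_i+\chi_{i+1}$ and is attained exactly by the index sets that fill the blocks of $E_1,\dots,E_i$ completely and take one further index from the block of $E_{i+1}$, since swapping a chosen index of a block $E_\ell$ with $\ell\le i$ for an index in $E_{i+1}$ or higher strictly increases the sum. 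Hence $\wedge^n(\psi^*_t)$ has exactly $d_{i+1}=\dim(E_{i+1})$ smallest Lyapunov exponents, whose Oseledec subspace $\widehat{E}^{\wedge^n}_1$ is spanned by the $e_1\wedge\cdots\wedge e_{d_1+\cdots+d_i}\wedge\tilde{e}$ with $\tilde{e}\in\{e_j:\ e_j\subset E_{i+1}\}$; this is the mirror of the multiplicity count in Proposition~\ref{ITLLE}, and by Lemma~\ref{2}(2) the subbundle $\widehat{E}^{\wedge^n}_1$ is dominated, $\widehat{E}^{\wedge^n}_1\prec(\widehat{E}^{\wedge^n}_2\oplus\cdots\oplus\widehat{E}^{\wedge^n}_t)$.

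Next one passes to $-X$ and its flow $\overline{\varphi}_t$. As recalled in Section~\ref{LEF}, $\mu$ remains ergodic, hyperbolic and regular for $\overline{\varphi}_t$; the Oseledec data of $\overline{\psi}^*_t$ are $\overline{\chi}_i=-\chi_{k-i+1}$, $\overline{E}_i=E_{k-i+1}$, $\overline{d}_i=d_{k-i+1}$, the stable index is $\overline{s}=k-s$, and the splitting $\mathcal{N}_\Gamma=\overline{E}_1\oplus\cdots\oplus\overline{E}_k$ is again dominated. Since $\overline{\psi}^*_t=\psi^*_{-t}$ as maps of $\mathcal{N}$, its $n$-th exterior power $\wedge^n(\overline{\psi}^*_t)$ equals $\wedge^n(\psi^*_{-t})$, so the Lyapunov exponents of $\wedge^n(\overline{\psi}^*_t)$ with respect to $\mu$ — and with respect to any periodic measure $\mu_p$ — are the negatives, listed in reverse order, of those of $\wedge^n(\psi^*_t)$: $\widehat{\overline{\lambda}}_j=-\widehat{\lambda}_{\binom{d-1}{n}-j+1}$ and $\widehat{\overline{\lambda}}_j(p)=-\widehat{\lambda}_{\binom{d-1}{n}-j+1}(p)$. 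The book-keeping that aligns the two pictures is: with $\iota:=k-i+1$ one has $\overline{d}_\iota+\cdots+\overline{d}_k=d_1+\cdots+d_i$, so $\overline{d}_\iota+\cdots+\overline{d}_k+1=n$; the constraint $\overline{s}+2\le\iota\le k$ is equivalent to $1\le i\le s-1$; and $\overline{d}_{\iota-1}=d_{i+1}$. Under this correspondence the largest-exponent Oseledec subspace of $\wedge^n(\overline{\psi}^*_t)$ is $\widehat{E}^{\wedge^n}_1$ of $\wedge^n(\psi^*_t)$, and Lemma~\ref{2}(2) read for $X$ becomes Lemma~\ref{2}(1) read for $-X$, which is exactly the hypothesis Proposition~\ref{ITLLE} uses.

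It then remains to apply Proposition~\ref{ITLLE} to the vector field $-X$ with the index $\iota$: it yields a hyperbolic periodic point $p$ of $\overline{\varphi}_t$ — equivalently of $\varphi_t$, since hyperbolicity of a periodic orbit is insensitive to time reversal — with $|\widehat{\overline{\lambda}}_j-\widehat{\overline{\lambda}}_j(p)|<\epsilon$ for the top $\overline{d}_{\iota-1}=d_{i+1}$ indices $j=\binom{d-1}{n}-d_{i+1}+1,\dots,\binom{d-1}{n}$. Substituting $\widehat{\overline{\lambda}}_j=-\widehat{\lambda}_{\binom{d-1}{n}-j+1}$ and the same identity for $\mu_p$ converts this into $|\widehat{\lambda}_j-\widehat{\lambda}_j(p)|<\epsilon$ for $j=1,\dots,d_{i+1}$, which is the claim. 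No new dynamical input is required — the shadowing argument of Theorem~\ref{Shadow}, the Pesin-block construction, and the cone/contraction estimates controlling the leading exponents of an exterior power are all already packaged in Propositions~\ref{TLLE} and~\ref{ITLLE}. The only genuine obstacle is to keep every relabeling consistent: the reversal $\iota=k-i+1$, the negation-and-order-reversal of the exponents on $\wedge^n(\mathcal{N})$, the matching of the block decompositions of $\wedge^n$ under $X$ and under $-X$, and the correspondence between the two parts of Lemma~\ref{2}, so that the index ranges and the dominance statements line up precisely.
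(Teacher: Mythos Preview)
Your proposal is correct and follows essentially the same route as the paper: both reduce the smallest-exponent statement to the largest-exponent statement by passing to the reversed vector field $-X$, exactly as Proposition~\ref{TSLE} was obtained from Proposition~\ref{TLLE}. Your relabeling $\iota=k-i+1$, the identities $\overline{d}_\iota+\cdots+\overline{d}_k+1=n$, $\overline{d}_{\iota-1}=d_{i+1}$, and the range check $\overline{s}+2\le\iota\le k\Leftrightarrow 1\le i\le s-1$ are all correct, and the sign-and-order reversal $\widehat{\overline{\lambda}}_j=-\widehat{\lambda}_{\binom{d-1}{n}-j+1}$ transfers the conclusion of Proposition~\ref{ITLLE} for $-X$ to the desired bottom-of-spectrum estimate for $X$.

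The only difference is organizational: the paper first redoes the Pesin-block construction for the forward flow $X$ with the splitting $E^{s,i}\oplus E^{u,i}$ and then invokes the strategy of Proposition~\ref{TSLE} (which itself passes to $-X$), whereas you apply Proposition~\ref{ITLLE} to $-X$ directly without repeating that setup. Your version is more economical but not a genuinely different argument.
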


\begin{proof}
   For each integer $n\in[2,d-1]$, Lemma \ref{1} states that the Lyapunov exponents of $\wedge^n(\psi^*_t)$ are all sums of the form $$\lambda_{j_1}+\lambda_{j_2}+\cdots+\lambda_{j_n},~~1\leq j_1<j_2<\cdots<j_n\leq d-1.$$ Consider the Oseledec splitting $\mathcal{N}_\Gamma=E_1\oplus E_2\oplus\cdots\oplus E_s\oplus E_{s+1}\oplus\cdots\oplus E_k$ of the scaled linear Poincar\'{e} flow $\psi^*_t$ with respect to the ergodic hyperbolic invariant measure $\mu$. The corresponding Lyapunov exponents satisfy $\chi_1<\chi_2<\cdots<\chi_s<0$. Hence $$\lambda_j<0,\quad\text{ for every } j\leq d_1+d_2+\cdots+d_s.$$ Therefore, for a given $n$, the smallest Lyapunov exponent of $\wedge^n(\psi^*_t)$ is attained by summing the $n$ smallest individual exponents $\lambda_j$. More precisely, for each $1\leq i\leq s-1$, set $n=n(i)=d_1+\cdots+d_i+1$. Then the smallest Lyapunov exponent of $\wedge^n(\psi^*_t)$ equals $$\lambda_1+\lambda_2+\cdots+\lambda_{n-1}+\chi_{i+1}.$$ Geometrically, the Oseledec subspace corresponding to this smallest exponent is spanned by vectors of the form $e_1\wedge e_2\wedge\cdots\wedge e_{n-1}\wedge\tilde{e}$, where $\left\{e_1,\cdots,e_{n-1}\right\}$ is a basis for $E_1\oplus\cdots \oplus E_i$ and $\tilde{e}$ is any basis vector of $E_{i+1}$. Since $\tilde{e}$ can be any of the $d_{i+1}$ basis vectors of $E_{i+1}$, we obtain exactly $d_{i+1}$ linearly independent vectors spanning this Oseledec subspace. Consequently, the smallest Lyapunov exponent of $\wedge^{n(i)}(\psi^*_t)$ has multiplicity $d_{i+1}$.  
      
   Given $1\leq i\leq s-1$, we have a dominated splitting $$\mathcal{N}_\Gamma=\left(E_1\oplus E_2\oplus\cdots\oplus E_i\right)\prec\left(E_{i+1}\oplus\cdots\oplus E_k\right).$$ For brevity, we write $$\mathcal{N}_\Gamma=E^{s,i}\oplus E^{u,i},\text{ where }E^{s,i}=E_1\oplus E_2\oplus\cdots\oplus E_{i},~E^{u,i}=E_{i+1}\oplus\cdots\oplus E_k.$$ Let $$\chi=\min_{1\leq j\leq k}\{\lvert\chi_j\rvert\}.$$ Since the Oseledec splitting $\mathcal{N}_\Gamma=E^s\oplus E^u$ is dominated, Lemma \ref{time} implies that for every $0<\epsilon/4\ll\chi$ and every $\delta\in(0,1/6d)$, there exists a positive number $L_i=L(\epsilon/4,\delta)$ such that
\begin{itemize}
   \item for every $T\geq L_i$, there exists a measurable set $\Gamma^{T,i}=\Gamma^{T,i}(\epsilon/4,\delta)\subset\Gamma$ with $\mu\left(\Gamma^{T,i}\right)\geq 1-\delta$;
 	
   \item there exists a natural number $N=N(T)$ such that for every integer $J\geq N$ and every $x\in\Gamma^{T,i}$, $$\prod_{j=0}^{J-1}\left\lVert\psi^*_T|_{E^{s,i}\left(\varphi_{jT}(x)\right)}\right\rVert\leq e^{-(\chi-\epsilon/4)JT},~\prod_{j=0}^{J-1}m\left(\psi^*_T|_{E^{u,i}\left(\varphi_{jT}(x)\right)}\right)\geq e^{(\chi-\epsilon/4)JT},$$ $$\text{ and }\dfrac{\left\lVert\psi^*_T|_{E^{s,i}}\right\rVert}{m\left(\psi^*_T|_{E^{u,i}}\right)}\leq e^{-T(\chi-\epsilon/4)}.$$     	  
\end{itemize}
   Fix an integer $T_0\geq L(\epsilon/4,\delta)$ and let $\eta_0=(\chi-\epsilon/4)T_0$. For every $C>0$, we define the {\bf Pesin block } $\Lambda^{T_0}_{\eta_0}(C)$ by
\begin{equation*}
\begin{aligned}
   \Lambda^{T_0}_{\eta_0}(C)=\Bigg\{x\in\Gamma:&\prod_{j=0}^{J-1}\left\lVert\psi^*_{T_0}|_{E^{s,i}\left(\varphi_{jT_0}(x)\right)}\right\rVert\leq Ce^{-J\eta_0},~\forall~J\geq 1,\\ &\prod_{j=0}^{J-1}m\left(\psi^*_{T_0}|_{E^{u,i}\left(\varphi_{jT_0}(x)\right)}\right)\geq C^{-1}e^{J\eta_0},~\forall~J\geq 1,~d(x,\text{Sing}(X))\geq\dfrac{1}{C}\Bigg\}.
\end{aligned}   
\end{equation*}
   According to \cite[Proposition 5.3]{WYZ}, $\Lambda^{T_0}_{\eta_0}(C)$ is compact and $$\mu\left(\Lambda^{T_0}_{\eta_0}(C)\right)\rightarrow\mu\left(\Gamma^{T_0,i}\right)\text{ as }C\rightarrow+\infty.$$ Therefore, we can choose $C$ sufficiently large such that $\mu\left(\Lambda^{T_0}_{\eta_0}(C)\right)>1-2\delta>0$. For this fixed $C$, there exists a positive integer $j_0=j_0(C)$ such that $C<e^{\frac{j_0T_0\epsilon}{4}}$. Thus, for every $x\in\Lambda^{T_0}_{\eta_0}(C)$ and every $J\geq 1$, $$\prod_{j=0}^{J-1}\left\lVert\psi^*_{j_0T_0}|_{E^{s,i}\left(\varphi_{jj_0T_0}(x)\right)}\right\rVert\leq e^{-(\chi-\epsilon/2)Jj_0T_0},~\prod_{j=0}^{J-1}m\left(\psi^*_{j_0T_0}|_{E^{u,i}\left(\varphi_{jj_0T_0}(x)\right)}\right)\geq e^{(\chi-\epsilon/2)Jj_0T_0}.$$ Setting $T=j_0T_0$ and $\eta=(\chi-\epsilon/2)j_0T_0$, we consider the set
\begin{equation*}
\begin{aligned}
   \Lambda^{T}_{\eta}(C)=\Bigg\{x\in\Gamma:&\prod_{j=0}^{J-1}\left\lVert\psi^*_{T}|_{E^{s,i}\left(\varphi_{jT}(x)\right)}\right\rVert\leq e^{-J\eta},~\forall~J\geq 1,\\ &\prod_{j=0}^{J-1}m\left(\psi^*_{T}|_{E^{u,i}\left(\varphi_{jT}(x)\right)}\right)\geq e^{J\eta},~\forall~J\geq 1,~d(x,\text{Sing}(X))\geq\dfrac{1}{C}\Bigg\}.
\end{aligned}   
\end{equation*}	      
  
   Take $y\in\Lambda^{T}_{\eta}(C)\cap\text{supp}(\mu)$. By the Poincar\'{e} Recurrence Theorem, there exists an increasing sequence of integers $\{l_j\}$ such that $d(y,\varphi_{l_jT}(y))\to 0$ as $j\to+\infty$. Following the same strategy used in the proof of Proposition \ref{TSLE}, which relies on the definition of $\wedge^n(\psi^*_t)$, Theorem \ref{Shadow}, and Lemma \ref{2}, we can select appropriate parameters $\varepsilon_i>0$, $\alpha\in(0, 1/6d)$, and a sufficiently large index $l_{j(i)}$. Applying this construction to the orbit segment $\varphi_{[0,l_{j(i)}T]}(y)$, we obtain a periodic point $p$ such that $$\left\lvert\widehat{\lambda}_j-\widehat{\lambda}_j(p)\right\rvert<\epsilon,\text{ for every }j=\binom{d-1}{n}-d_{i-1}+1,\binom{d-1}{n}-d_{i-1}+2,\cdots,\binom{d-1}{n},$$ where $\widehat{\lambda}_1(p)\leq\widehat{\lambda}_2(p)\leq\cdots\leq\widehat{\lambda}_{\binom{d-1}{n}}(p)$ are the Lyapunov exponents of $\wedge^n(\psi^*_t)$ with respect to the periodic measure $\mu_p$ supported on the periodic orbit ${\rm Orb}(p)$, and $\widehat{\lambda}_1\leq\widehat{\lambda}_2\leq\cdots\leq\widehat{\lambda}_{\binom{d-1}{n}}$ are the Lyapunov exponents of $\wedge^n(\psi^*_t)$ with respect to the ergodic hyperbolic invariant regular measure $\mu$.     	          
\end{proof}

\begin{proof}[Proof of Theorem \ref{ThmB}]
   Let $\varepsilon>0$ be given. First, choose $\varepsilon_0 > 0$ sufficiently small as required by Propositions \ref{TLLE} and \ref{TSLE}. Define $$\mathcal{L}=\max\left\{L,\overline{L},L_1,L_2,\cdots,L_{s-1},L_{s+2},\cdots,L_k\right\},~\widetilde{\varepsilon}=\min\left\{\varepsilon_0,\varepsilon_1,\varepsilon_2,\cdots,\varepsilon_{s-1},\varepsilon_{s+2},\cdots,\varepsilon_k\right\},$$ where $L$ and $\overline{L}$ are the constants from Propositions \ref{TLLE} and \ref{TSLE}, respectively, and $L_i,~\varepsilon_i$ (for $i \neq s,~s+1$) are those given by Propositions \ref{ITLLE} and \ref{ITSLE}. Next, choose $\delta>0$ sufficiently small. Then the set $$\Lambda:=\Gamma^{\mathcal{L}}\cap\overline{\Gamma^{\mathcal{L}}}\cap\left(\bigcap_{i=1}^{s-1}\Gamma^{\mathcal{L},i}\right)\cap\left(\bigcap_{i=s+2}^{k}\Gamma^{\mathcal{L},i}\right)$$ has positive $\mu$-measure. Applying Propositions \ref{TLLE}, \ref{TSLE}, \ref{ITLLE} and \ref{ITSLE} with these parameters, there exists a hyperbolic periodic point $p$ such that $$\left\lvert\lambda_i-\lambda_i(p)\right\rvert<\varepsilon,\text{ for each }i=1,2,\cdots,d-1,$$ where $\lambda_1(p)\leq\lambda_2(p)\leq\cdots\leq\lambda_{d-1}(p)$ are the Lyapunov exponents of the scaled linear Poincar\'{e} $\psi^*_t$ with respect to the periodic $\mu_p$ supported on the periodic orbit $\text{Orb}(p)$.      
\end{proof}


\noindent\text{Wanlou Wu}\\
School of Mathematics and Statistics\\
Jiangsu Normal University, Xuzhou, 221116, P.R. China\\
wuwanlou@163.com\\

\end{document}